\DeclareMathAlphabet{\mathbbm}{U}{bbm}{m}{n}
\newtheorem{theorem}{Theorem}[section] 
\newtheorem{prop}[theorem]{Proposition}
\newtheorem{cor}[theorem]{Corollary}
\newtheorem{lemma}[theorem]{Lemma}
\theoremstyle{plain}
\newtheorem{example}{Example}
\theoremstyle{remark}
\newtheorem{remark}{Remark}[section] 
\theoremstyle{definition}
\newtheorem{definition}{Definition}[section] 
\def\mathbi#1{\textbf{\em #1}}
\DeclareMathAlphabet{\mathpzc}{OT1}{pzc}{m}{it}
\newcommand{\Vars}{\mathcal{P}}
\newcommand{\impl}{\rightarrow}
\newcommand{\bydef} {:=}                      
\newcommand{\dimpl}{\Rightarrow}
\newcommand{\Frm}{\mathsf{Fm}}
\newcommand{\AlgA}{\mathscr{A}}              
\newcommand{\AlgB}{\mathscr{B}}						
\newcommand{\AlgC}{\mathscr{C}}      
\newcommand{\AlgF}{\mathscr{F}}
\newcommand{\AlgT}{\mathscr{T}}
\newcommand{\algA}{\mathsf{A}}
\newcommand{\algB}{\mathsf{B}}
\newcommand{\algC}{\mathsf{C}}  
\newcommand{\algG}{\mathsf{G}}    
\newcommand{\algT}{\mathsf{T}}    
\newcommand{\alga}{\mathsf{a}}
\newcommand{\algb}{\mathsf{b}}
\newcommand{\algc}{\mathsf{c}}
\newcommand{\algd}{\mathsf{d}}
\newcommand{\algg}{\mathsf{g}}
\newcommand{\ua}{\underline{\alga}}
\newcommand{\ub}{\underline{\algb}}
\newcommand{\uc}{\underline{\algc}}
\newcommand{\eqv}{\leftrightarrow}
\newcommand{\one}{\mathbf{1}}
\newcommand{\zero}{\mathbf{0}}
\newcommand{\opr}{\omega}
\newcommand{\Heyt}{\mathcal{H}}
\newcommand{\FSIHeyt}{\mathcal{H}^\circ}
\newcommand{\CHom}{\boldsymbol{\mathsf{H}}}
\newcommand{\CSub}{\boldsymbol{\mathsf{S}}}
\newcommand{\CFin}{\boldsymbol{\mathsf{F}}}
\newcommand{\CProp}{\boldsymbol{\mathsf{Pr}}}
\newcommand{\Sbs}{\boldsymbol{\mathsf{Sbs}}}
\newcommand{\CRel}{\boldsymbol{\mathsf{R}}}
\newcommand{\Z}{\mathsf{Z}} 
\newcommand{\LogL}{\mathsf{L}}
\newcommand{\classL}{\mathcal{L}}
\newcommand{\classK}{\mathcal{K}}
\newcommand{\classV}{\mathcal{V}}
\newcommand{\classF}{\mathcal{F}}
\newcommand{\classA}{\mathcal{A}}
\newcommand{\classI}{\mathcal{I}}
\newcommand{\classC}{\mathcal{C}}
\newcommand{\eqc}{\mathscr{V}}
\newcommand{\SubHeyt}{\Lambda\mathcal{H}}
\newcommand{\Lang}{\mathbi{L}}
\newcommand{\Con}{\mathcal{C}}
\newcommand{\DS}{\mathsf{S}}
\newcommand{\Sign}{\mathcal{S}}
\newcommand{\lbr}{\langle}
\newcommand{\rbr}{\rangle}
\newcommand{\qvar}{\mathcal{Q}}
\newcommand{\idn}{\mathsf{i}}
\newcommand{\ux}{\underline{x}}
\newcommand{\ut}{\underline{t}}
\newcommand{\ophi}{\overline{\phi}}
\newcommand{\opsi}{\overline{\psi}}
\newcommand{\oV}{\overline{\classV}}
\newcommand{\Ax}{\mathpzc{Ax}}
\newcommand{\idt}{\mathsf{t}}
\newcommand{\idr}{\mathsf{r}}
\newcommand{\ids}{\mathsf{s}}
\newcommand{\Distr}{\mathcal{D}}
\newcommand{\set}[2]{\{#1 \mid #2\}}
\def\impln#1{\xrightarrow{#1}}
\begin{document}


\title{Characteristic Formulas 50 Years Later (An Algebraic Account)}

\author{Alex Citkin\\
\texttt{Metropolitan Telecommunications, New York}}

\address{Alex Citkin 30 Upper Warren Way Warren, NJ 07059}
\email{acitkin@gmal.com}

\begin{abstract}
The Jankov (characteristic) formulas were introduced by V.~Jankov fifty tears ago in 1963. Nowadays the Jankov (or frame) formulas are used in virtually every branch of propositional logic: intermediate, modal, fuzzy, relevant, many-valued, etc. All these different logics have one thing in common: in one form or the other, they admit the deduction theorem. From a standpoint of algebraic logic it means that their corresponding varieties have a ternary deductive (TD) term. It is natural to extend the notion of characteristic formula to such varieties and, thus, apply this notion to an even broader class of logics, namely, to the logics which algebraic semantic is a variety with a TD term.  
\end{abstract}

\keywords{ Jankov formula, characteristic formula, pre-true formula, intermediate logic, algebraic semantic, ternary deductive term, independent axiomatizability, finitely presented algebra}

\maketitle

\section{Introduction}

Fifty years ago in his pioneering work \cite[for more details cf. \cite{Jankov_1969}]{Jankov_1963_ded}, V.~Jankov\footnote{Sometimes the transcription ``Yankov'' (e.g. \cite{Skura1989,Hirsch_Hodkinson_Kurucz_2002,Hodkinson_Wolter_Zakharyaschev}) is being used. Even though the transcription ``Yankov'' perhaps is more precise we will be using more commonly accepted transcription ``Jankov''.} introduced a notion of characteristic formula of finite implicative structure. The Jankov formulas provide a relatively simple way of constructing the independent sets of formulas (cf. Section \ref{indsect} for the definition) and, thus, the infinite sets of logics with certain properties. Using this approach in \cite{Jankov_1968}, V.~Jankov constructed the infinite independent sets of intuitionistic (propositional) formulas and proved that there is a continuum of intermediate logics and, hence, there are intermediate logics which are not finitely axiomatizable. Also, he proved that not every intermediate logic enjoys the finite model property.

For some time the technique developed by Jankov in his short 1963 paper \cite{Jankov_1963_ded} went unnoticed. And even a more comprehensive paper \cite{Jankov_1969} published by Jankov in 1968, in which the technique was described in detail, was not immediately appreciated. But this was about to change.  

Independently, in 1968 D.~de~Jongh in his Ph.D. thesis \cite{deJongh_Th} introduced - for intuitionistic frames - a notion of a frame formula that posses the same properties as the Jankov formula. This inspired a separate line of research of different flavors of the frame and subframe formulas (see, for instance, \cite{deJongh_Yanng,Bezhanishvili_N_PhD}). 

Independently, K.~Fine in \cite{Fine_Asc_1974} introduced a notion of a frame formula for \textbf{S4}-frames. As V.~Jankov did, he constructed an infinite independent set of \textbf{S4}-formulas and proved that there is a continuum of normal extensions of \textbf{S4} and, hence, there exist normal extensions of \textbf{S4} which are not finitely axiomatizable. Also K.~Fine proved that there exist normal extensions of \textit{S4} which lacks the finite model property. The definition of the Jankov formula was extended to modal algebras by W.~Routenberg \cite{Rautenberg_Book_1979,Rautenberg_1980} and W.~Blok \cite{Blok_PhD}. 

Different authors use different names for these formulas: Jankov formula, Jankov-de Jongh formula, Jankov-Fine formula. etc. To avoid the confusion we will be using the term ``Jankov formula'' only for the formulas defined by a diagram of a finite subdirectly irreducible algebra, and  for all the different flavors of this notion we will use the original term suggested by Jankov in \cite{Jankov_1963_ded}: characteristic formula.

Nowadays the Jankov (or frame) formulas are used in virtually every branch of propositional logic: intermediate, modal, fuzzy, relevant, many-valued, etc. All these different logics have one thing in common: in one form or the other, they admit the deduction theorem. From a standpoint of algebraic logic it means that their corresponding varieties have a ternary deductive (TD) term \cite{Blk_Pgz_3}. It is natural to extend the notion of characteristic formula to such varieties and, thus, apply this notion to an even broader class of logics, namely, to the logics which algebraic semantic is a variety with a TD term. We give such a generalization below. 

By the same token, the use of a TD term gives us a way to overview the developments in this area from the algebraic standpoint (i.e. focusing only on ``algebraic side'' of the Jankov formula). 

\subsection{Background}

We start with presenting the definitions and results from the original Jankov's paper \cite{Jankov_1963_ded} which is not easily accessible and the translation of which contains numerous typos.  

In this section we consider the (propositional) formulas built up of the countable set of propositional variables $\Vars$ and connectives $\land,\lor,\impl,\neg$. If $A$ is a formula, by $\Vars(A)$ we denote a set of all the propositional variables occurring in $A$. The set of all formulas is denoted by $\Frm$. A mapping $\sigma: \Vars \to \Frm$ is called a \textit{substitution}. If $A$ is a formula, $\sigma(A)$ denotes a result of substitution $\sigma$ applied to the formula $A$. If $\AlgA$ is an algebra in the signature $\land,\lor,\impl,\neg$, a mapping $\nu: \Vars \to \AlgA$ is called a \textit{valuation} (or an \textit{assignment}) \textit{in} $\AlgA$. It is easy to see that using a valuation $\nu$ one can compute in the algebra $\AlgA$ the value of any formula $A$, and we will denote this value by $\nu(A)$. 

If $A_1,\dots,A_n$ and $B$ are formulas, then by $A_1,\dots,A_n \Vdash B$ we denote the derivability of $A$ from $A_1,\dots,A_n$ in the intuitionistic propositional calculus (IPC) with substitution. The set of all formulas derivable in IPC, i.e. $\lbrace A: \ \Vdash A \text{ and } A \in \Frm \rbrace$, forms the intuitionistic propositional logic (IPL). In this paper we alway understand logic as a set of formulas closed under a given set of inferences rules, for instance IPL, while a calculus is a way to define the logic as a set of derivable formulas, for instance IPC.  

If a formulas $A$ is derivable from the formulas $A_1,\dots,A_n$ without substitution (but using axiom schemata), we write $A_1,\dots,A_n \vdash A$. By virtue of the deduction theorem, $A_1,\dots,A_n \vdash A$ if and only if $\Vdash A_1 \land \dots \land A_n \impl A$ and $\vdash A$ if and only if $\Vdash A$.

It is customary to use an abbreviation $A \eqv B$ for $(A \impl B) \land (B \impl A)$. We say that formulas $A$ and $B$ are\textit{ equivalent (in IPC)} and we write $A \cong B$ if $\Vdash A \impl B$ and $\Vdash B \impl A$. Clearly $A \cong B$ if and only if $\Vdash A \eqv B$ and if and only if $A \vdash B$ and $B \vdash A$. If $A,B$ are such formulas that $A \Vdash B$ and $B \Vdash A$ we say that $A$ and $B$ are \textit{interderivable} and denote this by $A \sim B$. Obviously,
\[
A \cong B \dimpl A \sim B
\]
(here and later we are using $\dimpl$ as a replacement for ``yields'').

A property $\pi$ of formulas, that is a unary predicate on $\Frm$, we call \textit{d-stable}\footnote{V.~Jankov called it \textit{intuitionistic}.} if $\pi(A)$ yields $\pi(B)$ for all $B$ such that $A \Vdash B$. For instance, $\pi(A)$ can mean that a formulas $A$ is valid in some model of IPC. 

The Heyting algebras are algebraic models (semantic) for IPL (more details can be found in the Section \ref{heytsem}). We recall that \textit{Heyting algebra} is a bounded distributive lattice relative to $\land,\lor$ with pseudo-complement $\neg$ and relative pseudo-complement $\impl$. We denote algebras by script-like capitals $\AlgA, \AlgB,\AlgC, \dots$ and their respective universes by $\algA,\algB, \algC,\dots$. By $|\AlgA|$ we denote the power of algebra $\AlgA$. The top element of a Heyting algebra $\AlgA$ we denote by $\one_\AlgA$ and the bottom element by $\zero_\AlgA$ and we omit the indexes when no confusion arises. If an algebra $\AlgA$ contains the greatest element among elements distinct from $\one$ such an element is called an \textit{opremum} and we denote it by $\opr(\AlgA)$. 

If $\AlgA$ is an algebra (not necessarily Heyting), by $Con(\AlgA)$ we denote the collection of all congruences on $\AlgA$. If $\theta \in Con(\AlgA)$ and $\alga \in \algA$ then $[\alga]_\theta$ is a congruence class relative to $\theta$ containing element $\alga$. For any two elements $\alga,\algb \in \algA$ there is a smallest congruence $\theta(\alga,\algb)$ such that $\alga \equiv \algb \pmod{\theta}$. Congruence $\theta(\alga,\algb)$ is called \cite{GraetzerB} \textit{principal congruence generated by elements} $\alga,\algb$. Likewise, for any finite lists of elements $\ua \bydef \alga_1,\dots,\alga_n$ and $\ub \bydef \algb_1,\dots,\algb_n$ of the same length there is a smallest congruence $\theta(\ua,\ub)$ such that $\alga_i \equiv \algb_i \pmod{\theta}; i=1,\dots,n$. The congruence $\theta(\ua,\ub)$ is called \cite{GraetzerB} \textit{compact (or finitely generated) congruence generated by} $\ua,\ub$. Recall from \cite{GraetzerB} that an algebra is called \textit{subdirectly irreducible (s.i.)} if there is a smallest non-trivial (i.e. distinct from identity) congruence which is called a \textit{monolith}. If $\AlgA$ is an s.i. algebra then $\mu(\AlgA)$ denotes its monolith. 

It is well known that a Heyting algebra $\AlgA$ is s.i. if and only if it contains an opremum and $\mu(\AlgA) = \theta(\omega(\AlgA),\one_\AlgA)$. The variety of all Heyting algebras we denote by $\Heyt$ and the subset of all finite s.i. algebras from $\Heyt$ we denote by $\FSIHeyt$.

Let us note the following property of homomorphisms of Heyting algebras that follows immediately from the definition of monolith.

\begin{prop} \label{prembed} Let $\AlgA$ be an s.i. Heyting algebra and $\AlgB$ be a Heyting algebra. Then a homomorphism $\phi: \AlgA \impl \AlgB$ is an embedding if and only if $\phi(\omega(\AlgA)) \neq \one_\AlgB$.
\end{prop}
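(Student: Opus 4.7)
The plan is a direct application of the characterization of s.i.~algebras via their monolith, using that a homomorphism is an embedding precisely when its kernel congruence is trivial.

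For the forward implication, I would use that if $\phi : \AlgA \to \AlgB$ is an embedding then it is injective; since $\omega(\AlgA) \neq \one_\AlgA$ by definition of the opremum (it is the greatest element \emph{distinct} from $\one$), injectivity forces $\phi(\omega(\AlgA)) \neq \phi(\one_\AlgA) = \one_\AlgB$.

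For the converse, I would argue by contraposition. Suppose $\phi$ is not an embedding; then $\ker\phi$ is a non-identity congruence on $\AlgA$. Since $\AlgA$ is s.i.\ and $\mu(\AlgA) = \theta(\omega(\AlgA),\one_\AlgA)$ is its monolith (as recalled in the paragraph preceding the proposition), we must have $\mu(\AlgA) \subseteq \ker\phi$. In particular $\omega(\AlgA) \equiv \one_\AlgA \pmod{\ker\phi}$, which gives $\phi(\omega(\AlgA)) = \phi(\one_\AlgA) = \one_\AlgB$, contradicting the hypothesis.

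There is essentially no obstacle here: the statement is a one-line consequence of the definition of monolith together with the explicit description of the monolith of an s.i.\ Heyting algebra as $\theta(\omega(\AlgA),\one_\AlgA)$. The only thing that needs care is invoking the already-recalled fact that every non-trivial congruence on an s.i.~algebra contains the monolith, which is immediate from the definition of monolith as the smallest non-trivial congruence.
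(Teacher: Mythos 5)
Your proof is correct and is precisely the argument the paper has in mind: the paper gives no explicit proof, merely remarking that the proposition ``follows immediately from the definition of monolith,'' and your two directions (injectivity plus $\omega(\AlgA)\neq\one_\AlgA$ for the forward implication; the monolith $\theta(\omega(\AlgA),\one_\AlgA)$ being contained in any non-identity kernel congruence for the converse) fill in exactly that intended one-line argument. Nothing is missing.
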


A formula $A$ is said to be \textit{valid} in a Heyting algebra $\AlgA$ (in symbols $\AlgA \models A$) if $\nu(A) = \one$ for every valuation $\nu: \Vars \impl \AlgA$. If $\nu$ is a valuation in an algebra $\AlgA$ and $\nu(A) \neq \one$ we say that $\nu$ is a \textit{refutation of} $A$ in $\AlgA$. Clearly, a formula $A$ is valid in an algebra $\AlgA$ if and only if the identity $A \approx \one$ hods in $\AlgA$ (in symbols $\AlgA \models A \approx \one$).   

 If $\classK$ is a class of algebras by $\CSub\classK$ and $\CHom\classK$ denote respectively the classes of all subalgebras and all homomorphic images of algebras from $\classK$. If $\classK$ consists of a single algebra $\AlgA$, we write $\CSub\AlgA$ and $\CHom\AlgA$. By $\CFin(\classK)$ we denote the subset of all finite non-degenerate (i.e. having more than one element) algebras from $\classK$.

For the theory of characteristic formulas the following relation between implication and congruences is crucial. 

\begin{prop} \label{congref} Let $\AlgA$ be an algebra and $\nu$ be a refutation of formula $A \impl B$ in $\AlgA$. Then there is a congruence $\theta$ on $\AlgA$ such that 
\[
\nu(A) \equiv \one \pmod{\theta} \text{ and } \nu(B) \not\equiv \one \pmod{\theta}. 
\] 
\end{prop}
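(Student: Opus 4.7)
The plan is to take $\theta$ to be the principal congruence $\theta(\nu(A),\one)$. Writing $\alga = \nu(A)$ and $\algb = \nu(B)$, the hypothesis that $\nu$ refutes $A \impl B$ in the Heyting algebra $\AlgA$ becomes $\alga \impl \algb \neq \one$, which by the residuation law is equivalent to $\alga \not\leq \algb$. With $\theta := \theta(\alga,\one)$, the first congruence condition $\nu(A) \equiv \one \pmod{\theta}$ is immediate from the definition of a principal congruence.

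The heart of the argument is to show $\algb \not\equiv \one \pmod{\theta}$. For this I would invoke the standard correspondence between congruences and filters on a Heyting algebra: $\theta(\alga,\one)$ coincides with the filter congruence induced by the principal filter $\{x \in \algA : \alga \leq x\}$, so $x \equiv y \pmod{\theta}$ if and only if $\alga \land x = \alga \land y$. In particular $\algb \equiv \one \pmod{\theta}$ if and only if $\alga \land \algb = \alga$, i.e.\ if and only if $\alga \leq \algb$; since this has been excluded, $\nu(B) \not\equiv \one \pmod{\theta}$ as required.

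The only non-bookkeeping ingredient is the filter-congruence correspondence, which is well-known Heyting-algebra folklore. If a self-contained argument were desired I would define a relation $\Theta$ on $\algA$ by $x \,\Theta\, y$ iff $\alga \land x = \alga \land y$, verify compatibility with $\land,\lor,\impl,\neg$ directly (using distributivity and the adjunction $\alga \land x \leq y$ iff $x \leq \alga \impl y$), observe that $(\alga, \one) \in \Theta$, and conclude by minimality of $\theta(\alga,\one)$ that $\theta \subseteq \Theta$; then $\algb \notin [\one]_\Theta = \{x : \alga \leq x\}$ forces $\algb \notin [\one]_\theta$. I do not foresee any real obstacle.
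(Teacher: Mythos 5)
Your proof is correct. The paper states Proposition \ref{congref} without proof (it is offered as a known fact about Heyting algebras), and your argument --- taking $\theta = \theta(\nu(A),\one)$, identifying it via the filter--congruence correspondence with the congruence $x \equiv y$ iff $\nu(A) \land x = \nu(A) \land y$ induced by the principal filter of $\nu(A)$, and then using $\nu(A)\impl\nu(B)\neq\one$ iff $\nu(A)\not\leq\nu(B)$ --- is exactly the standard route the paper implicitly relies on; the self-contained verification you sketch (compatibility of $\Theta$ with $\land,\lor,\impl,\neg$ via distributivity and residuation, then minimality of the principal congruence) closes the one nontrivial step.
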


In \cite{Jankov_1969} V.~Jankov observed the following important for us notion that was baptized in \cite{Kuznetsov_Gerchiu}: a formula $A$ is called \textit{pre-true in an algebra} $\AlgA$ if $A$ is invalid in $\AlgA$ but $A$ is valid in every proper subalgebra and any proper homomorphic image of $\AlgA$.

\begin{remark} V.~Jankov was considering pre-true formulas only for finite algebras, while in \cite{Kuznetsov_Gerchiu} it was essential that some infinite algebras can have a pre-true formula.  
\end{remark}

\begin{prop} \label{pre-true} Suppose $A$ is a formula such that $\rm IPC \nvdash A$. Then there is a finite Heyting algebra $\AlgA$ in which formula $A$ is pre-true.
\end{prop}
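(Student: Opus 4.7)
The plan is to invoke the finite model property of IPC and then minimize. Since $\mathrm{IPC}\nvdash A$, by the classical finite model property (McKinsey--Tarski) there exists at least one finite Heyting algebra on which $A$ is refuted. The claim will follow from choosing such an algebra of smallest possible cardinality.

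More precisely, let $\classF$ be the class of finite Heyting algebras in which $A$ is invalid; by the preceding remark $\classF\neq\emptyset$. Choose $\AlgA\in\classF$ with $|\AlgA|$ minimal. By definition $A$ is invalid in $\AlgA$, so it remains to verify the two closure conditions in the definition of a pre-true formula.

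For proper subalgebras: if $\AlgB\subsetneq\AlgA$ is a proper subalgebra, then $\AlgB$ is again a finite Heyting algebra and $|\AlgB|<|\AlgA|$. If $A$ were refuted in $\AlgB$, then $\AlgB\in\classF$ would contradict the minimal choice of $\AlgA$; hence $\AlgB\models A$. For proper homomorphic images: let $\phi\colon\AlgA\twoheadrightarrow\AlgC$ with $\AlgC$ a proper image, i.e.\ the associated congruence $\theta=\ker\phi$ is not the identity. Because $\AlgA$ is finite, the quotient $\AlgA/\theta$ is finite with $|\AlgA/\theta|<|\AlgA|$, so again minimality of $|\AlgA|$ forces $\AlgC\models A$.

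There is no real obstacle beyond invoking the finite model property; the rest is a routine minimization argument, exploiting the fact that for finite algebras both proper subalgebras and proper homomorphic images have strictly smaller universes. The only subtlety worth flagging is that this argument genuinely uses finiteness --- for infinite algebras a minimal refuting algebra need not exist, which is exactly why the remark preceding the proposition warns that Jankov himself restricted the pre-true notion to the finite case.
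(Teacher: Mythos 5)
Your proposal is correct and follows exactly the paper's argument: invoke the finite model property of IPC and take a refuting finite Heyting algebra of smallest cardinality, noting that proper subalgebras and proper homomorphic images of a finite algebra are strictly smaller. The paper compresses the verification into a single ``clearly,'' whereas you spell it out, but the underlying idea is identical.
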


\begin{proof} Recall that IPC enjoys a \textit{finite model property} (fmp), that is any formula $A$ that is not derivable in IPC is invalid in some finite Heyting algebra. Then one can take a finite algebra of the smallest power in which formula $A$ is invalid. Clearly, formula $A$ is pre-true in such an algebra.  
\end{proof}

Note that the Proposition \ref{pre-true1} holds for any logic with exact algebraic semantic (cf. Section \ref{algsemsec}) that enjoys fmp. 

\begin{prop} \label{pre-true1} Let $A(p_1,\dots,p_n)$ be a formula pre-true in an algebra $\AlgA$. Then the following hods
\begin{itemize}
\item[(a)] If $\nu$ is a refuting valuation then elements $\nu(p_1),\dots,\nu(p_n)$ generate algebra $\AlgA$;
\item[(b)] Algebra $\AlgA$ is s.i.; 
\item[(c)] If $\nu$ is a refuting valuation then $\nu(A) = \omega(\AlgA)$. 
\end{itemize}
\end{prop}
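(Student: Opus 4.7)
The plan is to apply the two clauses of pre-truth separately: the subalgebra clause will yield (a), and the homomorphic-image clause will yield both (b) and (c). Throughout I fix a refuting valuation $\nu$ and write $\algg_i \bydef \nu(p_i)$.

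For (a), I would take the subalgebra $\AlgB \subseteq \AlgA$ generated by $\algg_1, \dots, \algg_n$. Since $\nu(A)$ is built from these generators by Heyting operations, $\nu(A) \in \algB$, so the restriction of $\nu$ to $\AlgB$ still refutes $A$ in $\AlgB$. Pre-truth forbids any refutation of $A$ in a proper subalgebra, hence $\AlgB = \AlgA$, i.e., $\algg_1, \dots, \algg_n$ generate $\AlgA$.

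For (b), I would argue by contradiction. If $\AlgA$ were not s.i., the intersection of all non-identity congruences on $\AlgA$ would collapse to the identity; since $\nu(A) \neq \one_\AlgA$, there must then exist a non-identity congruence $\theta$ with $\nu(A) \not\equiv \one_\AlgA \pmod{\theta}$. The quotient map $\pi \colon \AlgA \to \AlgA/\theta$ then exhibits $\AlgA/\theta$ as a proper homomorphic image in which the projected valuation $\pi \circ \nu$ refutes $A$, contradicting pre-truth.

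For (c), I would turn the homomorphic-image clause around: for every non-identity congruence $\theta$ on $\AlgA$, $A$ is valid in the proper quotient $\AlgA/\theta$, so evaluating at $\pi \circ \nu$ forces $\nu(A) \equiv \one_\AlgA \pmod{\theta}$. Applying this to the monolith $\mu(\AlgA) = \theta(\omega(\AlgA), \one_\AlgA)$, whose associated filter under the congruence-filter correspondence for Heyting algebras is the principal up-set $\{\omega(\AlgA), \one_\AlgA\}$, I obtain $\nu(A) \in \{\omega(\AlgA), \one_\AlgA\}$; since $\nu(A) \neq \one_\AlgA$, this pins $\nu(A) = \omega(\AlgA)$. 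I do not anticipate a serious obstacle: each part uses exactly one half of the pre-truth definition against a well-chosen witness (the generated subalgebra, a non-identity congruence separating $\nu(A)$ from $\one_\AlgA$, and the monolith). The one place some extra structural input is invoked is in (c), via the standard description of the filter corresponding to the monolith of an s.i. Heyting algebra.
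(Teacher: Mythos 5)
Your proof is correct, and it supplies in full the routine argument that the paper dismisses with the single word ``Trivial'': each part uses one half of the pre-truth hypothesis against the natural witness (the generated subalgebra for (a), a congruence separating $\nu(A)$ from $\one$ for (b), and the monolith with its two-element filter $\{\omega(\AlgA),\one\}$ for (c)). There is nothing to compare against since the paper gives no details, but your route is exactly the intended one.
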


\begin{proof} Trivial.
\end{proof}

Note that (a) and (b) hold for any logics with equivalent algebraic semantic (for the definition refer to the Section \ref{algsemsec}).

\subsection{Characteristic Formulas: Definition}

With every finite s.i. Heyting algebra $\AlgA$ in the following way one can associate a formula $\delta^+(\AlgA)$ in variables $p_\alga; \alga \in \algA$ that we call a \textit{positive diagram formula} of $\AlgA$:
\begin{equation*}
\delta^+(\AlgA) \bydef \bigwedge_{\circ \in \lbrace \land, \lor,\impl\rbrace} \bigwedge_{\alga,\algb \in \algA} ((p_\alga \circ p_\algb) \eqv p_{\alga \circ \algb}) \land \bigwedge_{\alga \in \algA}(\neg p_\alga \eqv p_{\neg \alga}). \label{Diagram}
\end{equation*}

\begin{definition}
For each finite s.i. Heyting algebra  the formula  
\begin{equation}
J(\AlgA) \bydef \delta^+(\AlgA) \impl \omega(\AlgA).  \label{Jankov} 
\end{equation}
is called (cf. \cite{Jankov_1963_ded}) a \textit{Jankov characteristic formula} (Jankov formula for short)\footnote{In \cite{Litak_PhD} the Jankov formula is understood as a positive diagram.} .
\end{definition}

Next, we generalize the notion of Jankov formula by using the notion of finitely presented algebra. 

\begin{definition} \label{finpres}
Let $\AlgA$ be an algebra generated by a finite set $\algG$ of generators. Algebra is called \textit{finitely presented in generators} $\algG$ if there is a finite set of variables $\Vars_0$, a mapping (valuation) $\nu$ of  $\Vars_0$ onto $\algG$ and the formulas $A_1,\dots,A_n$ such that
\begin{itemize}
\item[(fp1)] $\Vars(A_i) \subseteq \Vars_0; i=1,\dots n$;
\item[(fp2)] $\nu(A_i) = \one_\AlgA$ for all $i=1,\dots n$;
\item[(fp3)] $\vdash A_1 \land \dots A_n \impl B$ for any formula $B$ such that $\Vars(B) \subseteq \Vars_0$ and $\nu(B) = \one_\AlgA$.
\end{itemize} 
\end{definition}

The variables $\Vars_0$ are called \textit{defining variables}, the valuation $\nu$ is called \textsl{defining valuation} and formulas $A_1,\dots,A_n$ are called \textit{defining formulas} (in variables $\Vars_0$ and valuation $\nu$). For Heyting algebras we can safely assume that we always have one defining formula. Clearly, if $A_1$ and $A_2$ are two formulas defining the same algebra in the same set of variables and the same valuation, then $\vdash A_1 \eqv A_2$.    

\begin{table}[h]
\begin{tabular}{p{290pt} r}  
\begin{example} \label{z3} Let $\Z_3 = \lbrace \zero,\omega,\one; \land, \lor, \impl, \neg \rbrace$ be a 3-element Heyting algebra. The element $\omega$ generates $\Z_3$. If we take $\Vars_0 = \lbrace p \rbrace$ and $\nu: p \mapsto \omega$ as defining variables and valuation, then the formula $\neg \neg p$ will define algebra $\Z_3$ (in variables $\Vars_0$ and valuation $\nu$), hence, algebra $\Z_3$ is finitely presented.   
\end{example} &
\ctdiagram{
\ctnohead
\cten 0,0,0,-30:{}
\ctv 0,0:{\bullet}
\ctv 0,-15:{\bullet}
\ctv 0,-30:{\bullet}
\ctv 7,0:{1}
\ctv 7,-15:{\omega}
\ctv 7,-30:{0}
\ctv 0,-40:{Fig.1.}
\ctv  0,-50:{Algebra \ \Z_3}
}
\end{tabular}
\end{table}

\begin{example}
It is not hard to see that for any finite non-trivial algebra $\AlgA$ one can take $\Vars_0 = \lbrace p_\alga: \alga \in \algA  \rbrace$, $\nu: p_\alga \mapsto \alga$ as defining variables and valuation and the formula $\delta^+(\AlgA)$ will define algebra $\AlgA$. In other words, any finite algebra is finitely presented, particularly it is finitely presented in a set of generators consisting of all its elements.
\end{example}
 
Perhaps the most important property of the defining formula (that is often taken as a definition of finitely presented algebra) is that in the Definition \ref{finpres} the requirement (fp3) can be replaced (cf. \cite[p.217 Theorem 1]{MaltsevBook})) with the following:\\
  
 \begin{itemize}
\item[(fp3')] For any algebra $\AlgB$ and any mapping $\phi: \algG \impl \AlgB$ if\\
$A(\phi(\nu(p_1)),\dots,\phi(\nu(p_n))) = \one_\AlgB$ 
then the mapping $\phi$ can be extended to a homomorphism $\overline{\phi}: \AlgA \impl \AlgB$. 
\end{itemize}
In other words, the following is true.

\begin{prop}\label{fphom} An algebra $\AlgA$ is finitely presented if and only if (fp1), (fp2) and (fp3') hold.
\end{prop}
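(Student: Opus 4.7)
The plan is to establish (fp3) $\Leftrightarrow$ (fp3') in the presence of (fp1) and (fp2), by separate arguments, ultimately relying on the correspondence between filters and congruences in a Heyting algebra. Throughout, abbreviate $A := A_1 \land \cdots \land A_n$ and, for any $\phi : \algG \impl \AlgB$, write $\mu_\phi := \phi \circ \nu : \Vars_0 \impl \AlgB$, so the premise of (fp3') reads $\mu_\phi(A) = \one_\AlgB$.

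For (fp3) $\Rightarrow$ (fp3') I would construct the extension $\ophi$ by hand. Since $\algG = \nu(\Vars_0)$ generates $\AlgA$, each $\alga \in \algA$ has the form $\nu(C)$ for some formula $C$ with $\Vars(C) \subseteq \Vars_0$, and I set $\ophi(\alga) := \mu_\phi(C)$. Well-definedness is the crux: if $\nu(C_1) = \nu(C_2)$, then $\nu(C_1 \eqv C_2) = \one_\AlgA$, so (fp3) delivers $\vdash A \impl (C_1 \eqv C_2)$; evaluating this theorem under $\mu_\phi$ in $\AlgB$ together with the hypothesis $\mu_\phi(A) = \one_\AlgB$ forces $\mu_\phi(C_1) = \mu_\phi(C_2)$. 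Once well-defined, $\ophi$ preserves the operations and extends $\phi$ by construction.

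For (fp3') $\Rightarrow$ (fp3) I would argue by contraposition. Suppose $\Vars(B) \subseteq \Vars_0$, $\nu(B) = \one_\AlgA$, and yet $\not\vdash A \impl B$. By completeness of IPC with respect to Heyting algebras some valuation refutes $A \impl B$; quotienting the target algebra by the principal filter generated by the value of $A$ (using the filter--congruence correspondence in $\Heyt$) produces a Heyting algebra $\AlgB$ and a valuation $\mu : \Vars_0 \impl \AlgB$ with $\mu(A) = \one_\AlgB$ and $\mu(B) \neq \one_\AlgB$. After a harmless preliminary reduction making $\nu$ injective (one may replace any redundant variable by an equivalent one throughout the $A_i$, since substitution preserves $\vdash$ and neither (fp3) nor (fp3') is affected), define $\phi : \algG \impl \AlgB$ by $\phi(\nu(p)) := \mu(p)$. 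Then $\mu_\phi = \mu$, so the hypothesis of (fp3') is met and $\phi$ extends to a homomorphism $\ophi : \AlgA \impl \AlgB$. But now $\ophi(\nu(B)) = \mu(B) \neq \one_\AlgB$, whereas $\ophi(\nu(B)) = \ophi(\one_\AlgA) = \one_\AlgB$, a contradiction.

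The main obstacle in both directions is the same well-definedness issue: in the forward direction it is exactly what (fp3) is designed to enable, and in the converse it is what compels the reduction to injective $\nu$ before $\phi$ can be defined from $\mu$. Once this is dispatched, the proof amounts to a dictionary between the formula-level statement (fp3) and the universal-property statement (fp3') for the quotient of the free Heyting algebra $\AlgF(\Vars_0)$ by the principal congruence $\theta([A], \one)$, mediated by completeness of IPC.
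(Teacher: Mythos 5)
The paper offers no proof of this proposition at all---it simply cites Mal'cev---so there is no in-paper argument to match yours against. Judged on its own, your proof is essentially correct and is the standard argument: the universal property of the quotient of the free algebra $\AlgF(\Vars_0)$ by the congruence generated by the relations, translated into deduction-theoretic terms via completeness of IPC and the filter--congruence correspondence (the paper's Proposition \ref{congref}). Both directions check out; in particular, well-definedness of $\ophi$ via $\vdash A \impl (C_1 \eqv C_2)$ is exactly the right use of (fp3), and the contrapositive argument for the converse is sound.

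The one point to repair is the parenthetical justifying the reduction to injective $\nu$. It is not true that ``neither (fp3) nor (fp3') is affected'': the paper only asks that $\nu$ map $\Vars_0$ \emph{onto} $\algG$, and for a non-injective $\nu$ the fixed-presentation equivalence genuinely fails. Present $\Z_3$ with $\Vars_0 = \lbrace p, q \rbrace$, $\nu(p) = \nu(q) = \omega$, and defining formulas $\neg\neg p$, $\neg\neg q$: condition (fp3') holds, since its hypothesis collapses to $\neg\neg \phi(\omega) = \one_\AlgB$, which is the presenting condition of Example \ref{z3}; but (fp3) fails for $B = (p \eqv q)$, because $\neg\neg p \land \neg\neg q \impl (p \eqv q)$ is refuted in $\Z_3 \times \Z_3$ by $p \mapsto (\omega,\one)$, $q \mapsto (\one,\omega)$. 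So collapsing redundant variables can turn (fp3) from false to true; the reduction is doing real work, not cosmetics. Your proof survives because the only transfer you actually use is valid---(fp3') passes unchanged to the collapsed presentation (the hypothesis and conclusion literally coincide after the substitution), and establishing (fp3) for the collapsed presentation certifies that $\AlgA$ is finitely presented---but the proposition must then be read existentially over presentations rather than as an equivalence for a fixed $(\Vars_0,\nu,A_1,\dots,A_n)$, and your parenthetical should be weakened to claim only that (fp3') transfers.
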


From the above Proposition it follows that if formulas $A$ and $B$ define the same algebra, then $A \sim B$. And, as we have noticed earlier, if formulas $A$ and $B$ define the same algebra in the same set of variables and the same valuation, then $A$ and $B$ are equivalent.   

\begin{definition}
Let an s.i. algebra $\AlgA$ be finitely presented by a formula $A$ in a set of variables $\Vars_0$ and  a valuation $\nu$. The formula 
\begin{equation}
	\chi(\AlgA,\Vars_0,\nu) \bydef A \impl B \text{ where } \Vars(B) \subseteq \Vars_0 \text{ and }\nu(B) = \omega(\AlgA) \tag{Char} \label{char}
\end{equation}
we call a \textit{characteristic formula}. 
\end{definition}

Let us note that $\nu$ is a refuting valuation and, hence,
\begin{equation}
\AlgA \not\models \chi(\AlgA,\Vars',\nu). \label{selfrefut}
\end{equation}

\begin{remark}
The formula $B$ from the definition always exists because elements $\nu(p);p \in \Vars'$ generate algebra $\AlgA$.  Neither the defining formula, nor the formula $B$ expressing $\omega(\AlgA)$ are unique. Later we will demonstrate that a characteristic formula is defined uniquely modulo $\sim$; and for given $\Vars', \nu$, a characteristic formula is defined uniquely modulo $\eqv$. 
\end{remark}

\begin{example} \label{wexcl} The formula $\neg \neg p \impl p$ is a characteristic formula of 3-element Heyting algebra $\Z_3$ depicted at the Fig.1. The reader can find (almost a half of the page long) Jankov formula of this algebra in \cite[10.3.2]{Galatos_et_Book}.
\end{example} 

Recall that by Tietze's Theorem \cite[p.222]{MaltsevBook} if an algebra is finitely presented in some set of generators, then it is finitely presentable in any finite set of generators. 

\begin{example} If $\AlgA$ is a finite s.i. Heyting algebra one can take the set $\algg_1,\dots, \algg_n$ of all distinct from $\zero$ $\lor$-irreducible elements as a set of generators: any element of $\AlgA$ can be expressed as a disjunction of $\lor$-irreducible elements. If we define a characteristic formula of $\AlgA$ in variables $\lbrace p_{\algg_1},\dots,p_{\algg_n} \rbrace$ and valuation $\nu: p_{\algg_i} \mapsto \algg_i; i=1,\dots,n$ we obtain the de Jongh formula \cite{deJongh_Th,Bezhanishvili_N_PhD} of $\AlgA$. Similarly for closure algebras we will obtain the Fine formula \cite{Fine_Asc_1974}.
\end{example}

\subsection{Characteristic Formulas: Properties} \label{Jankovprop}

Almost all properties of characteristic formulas can be derived from the following.

\begin{prop}\label{prophom} (comp. \cite[Theorem about Ordering]{Jankov_1969}) Let $\AlgA$ be a finitely presented s.i. algebra and $\AlgB$ be an algebra. Then
\[
\AlgB \not\models \chi(\AlgA) \text{ if and only if } \AlgA \in \CSub\CHom\AlgB.
\]
\end{prop}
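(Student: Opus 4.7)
The plan is to prove the two directions of the biconditional separately, using the three previously established propositions (\ref{prembed}, \ref{congref}, and \ref{fphom}) as the main tools. Write $\chi(\AlgA) = A \impl B$, where $A$ is a defining formula for $\AlgA$ in variables $\Vars_0$ with defining valuation $\nu:\Vars_0\to\AlgA$, and $B$ is a formula with $\Vars(B)\subseteq\Vars_0$ and $\nu(B)=\omega(\AlgA)$.

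For the nontrivial direction $\AlgB\not\models\chi(\AlgA)\dimpl\AlgA\in\CSub\CHom\AlgB$, I would start with a refuting valuation $\mu:\Vars_0\to\AlgB$ of $A\impl B$. Since one does not in general have $\mu(A)=\one_\AlgB$, the first key move is to invoke Proposition \ref{congref} to obtain a congruence $\theta$ on $\AlgB$ such that $\mu(A)\equiv\one\pmod{\theta}$ while $\mu(B)\not\equiv\one\pmod{\theta}$. Let $q:\AlgB\to\AlgB/\theta$ be the quotient, and set $\mu' \bydef q\circ\mu$. Then in $\AlgB/\theta$ we have $\mu'(A)=\one$ and $\mu'(B)\neq\one$. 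Now the second key move is to apply the alternative characterization of finitely presented algebras (condition (fp3') via Proposition \ref{fphom}): because $\mu'(A)=\one_{\AlgB/\theta}$, the assignment $\nu(p)\mapsto\mu'(p)$ on the generators of $\AlgA$ extends to a homomorphism $\overline{\phi}:\AlgA\to\AlgB/\theta$. (Well-definedness of this assignment, in case $\nu$ is not injective on $\Vars_0$, follows from (fp3) applied to the equivalences $p_i\eqv p_j$ whose $\nu$-image is $\one_\AlgA$.) Finally, $\overline{\phi}(\omega(\AlgA))=\overline{\phi}(\nu(B))=\mu'(B)\neq\one_{\AlgB/\theta}$, so Proposition \ref{prembed} upgrades $\overline{\phi}$ to an embedding, giving $\AlgA\in\CSub(\AlgB/\theta)\subseteq\CSub\CHom\AlgB$.

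The converse direction is essentially a bookkeeping argument. Given $\AlgA\in\CSub\CHom\AlgB$, fix a surjective homomorphism $h:\AlgB\twoheadrightarrow\AlgC$ with $\AlgA$ a subalgebra of $\AlgC$. Lift the defining valuation $\nu$ along $h$: for each $p\in\Vars_0$ pick $b_p\in\AlgB$ with $h(b_p)=\nu(p)$, and define $\mu(p)\bydef b_p$. Then $h\circ\mu$ agrees with $\nu$ on every formula over $\Vars_0$. Evaluating in $\AlgC$, since $\AlgA$ is a subalgebra we have $\nu(A)=\one_\AlgA=\one_\AlgC$ and $\nu(B)=\omega(\AlgA)$, so $\nu(A\impl B)=\omega(\AlgA)\neq\one_\AlgC$. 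Therefore $h(\mu(A\impl B))\neq\one_\AlgC$, which forces $\mu(A\impl B)\neq\one_\AlgB$, i.e. $\AlgB\not\models\chi(\AlgA)$.

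The step I expect to require the most care is the universal-property invocation in the forward direction: one must first pass to the quotient $\AlgB/\theta$ (so that the defining formula $A$ actually evaluates to $\one$) before applying (fp3'), and one must check that Proposition \ref{prembed} legitimately applies to the resulting homomorphism via the condition $\overline{\phi}(\omega(\AlgA))\neq\one$. The rest of the argument is essentially a direct chase through the definitions.
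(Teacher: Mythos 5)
Your proof is correct and follows essentially the same route as the paper's: refuting valuation, Proposition \ref{congref} to get the congruence $\theta$, passage to $\AlgB/\theta$, the universal property (fp3') via Proposition \ref{fphom} to obtain the homomorphism, and Proposition \ref{prembed} to upgrade it to an embedding. You additionally write out the easy converse (which the paper leaves implicit, as it follows from $\AlgA\not\models\chi(\AlgA)$ and preservation of validity under subalgebras and homomorphic images) and you note the well-definedness issue when $\nu$ is not injective, a point the paper glosses over.
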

\begin{proof} Let $A \impl B$ be a characteristic formula of $\AlgA$ and $\nu$ be a valuation refuting $A \impl B$ in $\AlgB$. By the Proposition \ref{congref} there is a congruence $\theta$ on $\AlgB$ such that 
\[
\nu(A) \equiv \one_\AlgB \pmod{\theta}  \text{ and } \nu(B) \not\equiv \one_\AlgB \pmod{\theta}. 
\]
Let us consider a quotient algebra $\AlgB/\theta$ that consists of congruence classes $[\algb]_\theta; \algb \in \algB$. We have
\[
[\nu(A)]_\theta = [\one_\AlgB]_\theta \text{ and } [\nu(B)]_\theta \neq [\one_\AlgB]_\theta. 
\]
Therefore
\[
A([\nu(p_1)]_\theta, \dots, [\nu(p_n)]_\theta) = \one_{\AlgB/\theta} \text{ and } B([\nu(p_1)]_\theta, \dots, [\nu(p_n)]_\theta) \neq \one_{\AlgB/\theta},
\]
where $p_1,\dots,p_n$ is a list of all variables occurring in $A,B$. By virtue of the Proposition \ref{fphom}, the mapping
\[
\phi: \nu(p_i) \mapsto [\nu(p_i)]_\theta; i=1,\dots,n
\]
can be extended to a homomorphism 
\[\overline{\phi}: \AlgA \impl \AlgB/\theta.
\] 
And for $\phi(\nu(B)) \neq \one_{\AlgB/\theta}$, by the Proposition \ref{prembed}, we can conclude that $\overline{\phi}$ is an isomorphism. Thus $\AlgA$ embeds in $\AlgB/\theta$, that is $\AlgA \in \CSub\CHom\AlgB$. 
\end{proof}

The most important properties of characteristic formulas are presented by the following theorem proved in \cite{Jankov_1963_ded}. 

\begin{theorem}[Jankov Theorem] \label{JankovThrm} Let $\AlgA$ be a finite s.i. Heyting algebra, $A$ be a formula and $\pi$ be a d-stable property. Then
\begin{itemize}
\item[(Ded)] $\AlgA \not\models A$ if and only if $A \Vdash \chi(\AlgA)$;
\item[(Prp)] $\AlgA \models B$ for every formula $B$ such that $\pi(B)$ is true if and only if $\pi(\chi(\AlgA))$ is not true; 
\item[(Gen)] If the algebra $\AlgA$ is generated by its $k$ elements then there is a formula $B$ containing $k$ distinct variables such that $B \sim \chi(\AlgA)$;
\item[(Rft)] For any finite s.i. Heyting algebra $\AlgB$ the following are equivalent
\begin{itemize}
\item[(a)] $\chi(\AlgA) \Vdash \chi(\AlgB)$;
\item[(b)] $\AlgB \not\models \chi(\AlgA)$;
\item[(c)] any formula $B$ refutable in $\AlgA$ is refutable in $\AlgB$.
\end{itemize}
\end{itemize}
\end{theorem}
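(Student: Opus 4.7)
The plan is to derive all four parts uniformly from Proposition \ref{prophom}, the self-refutation $\AlgA \not\models \chi(\AlgA)$ recorded in \eqref{selfrefut}, and algebraic completeness of IPC with respect to $\Heyt$, which lets me read $A \Vdash B$ as the semantic statement ``every Heyting algebra that validates $A$ also validates $B$''. The one auxiliary fact I will use repeatedly is that validity of an identity $A \approx \one$ is preserved under subalgebras and under homomorphic images, so a refutation in a subalgebra of a quotient $\AlgB/\theta$ lifts, contrapositively, to a refutation in $\AlgB$ itself.

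For (Ded), the easy direction is immediate: if $A \Vdash \chi(\AlgA)$, then $\AlgA \not\models \chi(\AlgA)$ forces $\AlgA \not\models A$. For the converse, I would assume $\AlgA \not\models A$ and take an arbitrary Heyting algebra $\AlgB$ with $\AlgB \not\models \chi(\AlgA)$; Proposition \ref{prophom} supplies a congruence $\theta$ and an embedding of $\AlgA$ into $\AlgB/\theta$. If one had $\AlgB \models A$, validity would descend to the quotient $\AlgB/\theta$ and then to the subalgebra $\AlgA$, contradicting $\AlgA \not\models A$; hence $\AlgB \not\models A$, and by completeness $A \Vdash \chi(\AlgA)$. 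Part (Prp) follows quickly from (Ded): if $\pi(\chi(\AlgA))$ were true, taking $B := \chi(\AlgA)$ would violate the hypothesis via $\AlgA \not\models \chi(\AlgA)$; conversely, if some $B$ with $\pi(B)$ satisfied $\AlgA \not\models B$, then (Ded) would give $B \Vdash \chi(\AlgA)$, and d-stability of $\pi$ would yield $\pi(\chi(\AlgA))$, contradicting the assumption.

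For (Gen), I would invoke Tietze's theorem (cited just before the statement) to re-present $\AlgA$ using exactly $k$ defining variables matching a chosen $k$-element generating set; the resulting characteristic formula $B$ has $k$ variables by construction. To justify $B \sim \chi(\AlgA)$ I would prove a general uniqueness-modulo-$\sim$ statement for any two characteristic formulas $\chi_1, \chi_2$ of the same s.i.\ algebra: applying (Ded) in both directions, the self-refutations $\AlgA \not\models \chi_i$ give $\chi_1 \Vdash \chi_2$ and $\chi_2 \Vdash \chi_1$. For (Rft), the equivalence (a)$\Leftrightarrow$(b) is literally (Ded) with $A := \chi(\AlgA)$ and target algebra $\AlgB$. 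For (b)$\Leftrightarrow$(c): the implication (c)$\Rightarrow$(b) is obtained by instantiating $B := \chi(\AlgA)$ and using self-refutation; the implication (b)$\Rightarrow$(c) reuses the embedding-plus-quotient argument from (Ded), transporting any refutation of $B$ in $\AlgA$ up through $\AlgA \hookrightarrow \AlgB/\theta$ to a refutation in $\AlgB$ by the preservation principle above.

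Everything in the theorem is close to routine once Proposition \ref{prophom} is available; the only point that needs genuine care, and which I would isolate first as a standing lemma, is the contrapositive lifting of refutation along the two-step factorisation $\AlgA \in \CSub\CHom\AlgB$. It is precisely this combination of preservation under $\CSub$ and $\CHom$, together with the self-refutation property, that drives every one of (Ded), (Prp), (Gen), and (Rft) without further ado.
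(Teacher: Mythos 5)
Your proof is correct and follows essentially the same route as the paper: every part is driven by Proposition \ref{prophom}, the self-refutation \eqref{selfrefut}, and algebraic completeness of IPC over $\Heyt$, with validity preserved under $\CSub$ and $\CHom$. The only organizational difference is in (Rft), where you get (a)$\Leftrightarrow$(b) directly as an instance of (Ded) applied to the algebra $\AlgB$, whereas the paper proves the cycle (a)$\Rightarrow$(b)$\Rightarrow$(c)$\Rightarrow$(a) and uses transitivity of $\CSub\CHom$ for the last step; both work, and yours is marginally more economical.
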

\begin{proof}
(Gen) follows immediately from the definition of finitely presented algebra and characteristic formula.

The proof of the rest of the items follows straight from the Proposition \ref{prophom} and the fact that for any formulas $A$ and $B$ 
\[
A \Vdash B \text{ if and only if } \AlgA \models A  \text{ yields } \AlgA \models B \text{ in any Heyting algebra } \AlgA.
\] 

(Ded) If $\AlgA \not\models A$ then, by virtue of the Proposition \ref{prophom}, the formula $A$ is refutable in any algebra in which $\chi(\AlgA)$ is refutable, that is $A \Vdash \chi(\AlgA)$. The converse statement is a trivial consequence of \eqref{selfrefut}.

(Rft)
(a) $\dimpl$ (b). If $\chi(\AlgA) \Vdash \chi(\AlgB)$, by \eqref{selfrefut} $\AlgB \not\models \chi(\AlgB)$ therefore $\AlgB \not\models \chi(\AlgA)$.
 
(b) $\dimpl$ (c). If $\AlgB \not\models \chi(\AlgA)$, then by the Proposition \ref{prophom} $\AlgA \in \CSub\CHom\AlgB$. Hence, any formula refutable in $\AlgA$ is refutable in $\AlgB$. 

(c) $\dimpl$ (a). By \eqref{selfrefut}, $\AlgA \not\models \chi(\AlgA)$, hence by (c) $\AlgB \not\models \chi(\AlgA)$ and, by the Proposition \ref{prophom}, 
\[
\AlgA \in \CSub\CHom\AlgB.
\] 
Assume $\AlgC$ is an algebra and $\AlgC \not\models \chi(\AlgB)$. Then
\[ 
\AlgB \in \CSub\CHom \AlgC,
\]
 and
\[
\AlgA \in \CSub\CHom\AlgB \subseteq \CSub\CHom \AlgC.
\]
Thus $\AlgC \not\models \chi(\AlgA)$, that is $\chi(\AlgA)$ is refutable in any algebra where $\chi(\AlgB)$ is refutable. Hence, $\chi(\AlgA) \Vdash \chi(\AlgB)$.

(Prp) Suppose $\AlgA \models B$ for all formulas $B$ such that $\pi(B)$ is true. Then $\pi(\chi(\AlgA))$ is not true for \eqref{selfrefut}.

Conversely, assume for contradiction that there is a formula $B$ satisfying $\pi$ such that $\AlgA \not\models B$. Then by (Ded) $B \Vdash \chi(\AlgA)$ and, by the definition of d-stable property, $\pi(\chi(\AlgA))$ is true.
\end{proof}

\begin{example} (stated without a proof in  \cite[Theorem 3(a)]{Jankov_1963_sup} and proved in \cite[Theorem 5.3]{Troelstra_Intermediate_1965}, \cite[Theorem 1]{Jankov_Extension_1968}) As we saw in the Example \ref{wexcl}, the formula $\neg \neg p \impl p$ is a characteristic formula of the algebra $\Z_3$. By (Ded) for every formula $A$ if $\Z_3 \not\models A$ then $A \Vdash (\neg \neg p \impl p)$. Thus, for any formula $A$ valid in classical propositional logic ($\rm CPL$), the calculus $\rm IPC + A$ defines $\rm CPL$ if and only if $\Z_3 \not\models A$. 
\end{example}

In the same paper \cite{Jankov_1963_ded} on the set $\FSIHeyt$ V.~Jankov introduced a quasi-order:
\[
\AlgA \leq \AlgB \bydef \AlgB \not\models \chi(\AlgA).
\]
He also observed that the above quasi-order is, in fact, a partial order. The pure algebraic meaning of the introduced relation can be seen from the following very important for applications property of characteristic formulas that was not explicitly stated in \cite{Jankov_1963_ded} but was proved in \cite{Jankov_1969} as a following 
\begin{theorem}[About Ordering] \label{thrmord} Let $\AlgA$ and $\AlgB$ be finite s.i. Heyting algebras. Then the following are equivalent 
\begin{itemize}
\item[(a)] $\AlgA \leq \AlgB$;
\item[(b)] $\AlgB \not\models \chi(\AlgA)$;
\item[(c)] every formula refutable in $\AlgA$ is refutable in $\AlgB$;
\item[(d)] $\AlgA \in \CSub\CHom\AlgB$.  
\end{itemize}
\end{theorem}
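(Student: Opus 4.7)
The plan is to observe that this theorem is essentially a repackaging of results already in hand, so I would arrange the equivalences into a short cycle anchored on Proposition \ref{prophom}. The equivalence (a) $\Leftrightarrow$ (b) is immediate from the definition of the quasi-order, $\AlgA \leq \AlgB \bydef \AlgB \not\models \chi(\AlgA)$, so it needs no argument beyond citing the definition. The equivalence (b) $\Leftrightarrow$ (d) is precisely the statement of Proposition \ref{prophom}, once one notes that a finite s.i.\ Heyting algebra is finitely presented (e.g.\ via its positive diagram $\delta^+(\AlgA)$), so that proposition applies directly to $\AlgA$. Thus the entire task reduces to connecting (c) to any one of (b) or (d).

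For (d) $\dimpl$ (c) I would invoke the standard fact that validity of a formula, understood as the identity $A \approx \one$, is preserved under taking subalgebras and under homomorphic images. Hence if $\AlgB \models A$ then every algebra in $\CSub\CHom\AlgB$ satisfies $A$; taking the contrapositive, any formula refuted in $\AlgA \in \CSub\CHom\AlgB$ must be refuted in $\AlgB$. For (c) $\dimpl$ (b) I would use the self-refutation property \eqref{selfrefut}: the characteristic formula $\chi(\AlgA)$ is refuted in $\AlgA$ by its very definition, so (c) forces it to be refuted in $\AlgB$ as well.

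Chaining the four implications (a) $\Leftrightarrow$ (b), (b) $\Leftrightarrow$ (d), (d) $\dimpl$ (c), (c) $\dimpl$ (b) then closes the cycle and gives all equivalences. I do not expect any genuine obstacle: the only thing to be careful about is making sure that Proposition \ref{prophom} is legitimately applicable, i.e.\ that the finite s.i.\ Heyting algebra $\AlgA$ really is finitely presented. This is witnessed by the $\delta^+(\AlgA)$ example already recorded in the text, so the appeal is clean. Essentially, this theorem repeats the (Rft) clause of the Jankov Theorem \ref{JankovThrm} with the added fourth equivalent condition $\AlgA \in \CSub\CHom\AlgB$, which is exactly what Proposition \ref{prophom} supplies.
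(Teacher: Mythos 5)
Your proposal is correct and follows essentially the same route as the paper, which simply notes that the theorem ``follows immediately from the Jankov Theorem and Proposition \ref{prophom}'': your (c) $\dimpl$ (b) via \eqref{selfrefut} and (d) $\dimpl$ (c) via preservation of validity under $\CSub$ and $\CHom$ are exactly the arguments used in the (Rft) clause, and (b) $\Leftrightarrow$ (d) is Proposition \ref{prophom} in both treatments. Your explicit check that a finite s.i.\ Heyting algebra is finitely presented (via $\delta^+(\AlgA)$), so that Proposition \ref{prophom} applies, is a worthwhile point the paper leaves implicit.
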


The proof of the Theorem \ref{thrmord} follows immediately from the Jankov Theorem and Proposition \ref{prophom}.

\begin{remark} In \cite{Jankov_1963_ded} V.~Jankov is considering not only Heyting algebras but also the $\lbrace \land,\lor,\impl\rbrace$ and $\lbrace \land,\impl\rbrace$-reducts as well as these reducts endowed by a constant $\zero$ without any additional axioms for $\zero$.
\end{remark}

Let us note that neither in the proof of the Proposition \ref{prophom}, nor in the proof of the Jankov Theorem or the Theorem \ref{thrmord}, we were using finiteness of the s.i algebra. Thus, the statements hold for any finitely presented s.i. algebras. But, as it was observed in \cite[Theorem 1.3]{Citkin_PhD} (see also \cite[Proposition 2.1]{Butz_1998}), any finitely presented Heyting algebra is finitely approximated, hence, every finitely presented s.i. Heyting Algebra is finite (it is true for any finitely approximated variety with equationally definable principal congruences (EDPC) \cite[Corollary 3.3]{Blk_Pgz_1}). However, this observation becomes important if we are considering not finitely approximated varieties that may contain infinite finitely presented s.i. algebras.

\subsection{Independent Sets} \label{indsect}

The quasi-order that was introduced in the previous section and its properties formulated in the Theorem \ref{thrmord} give us the means for constructing sets of logics obeying a given d-stable property. In \cite{Jankov_1968} V.~Jankov observed that there is a continuum of intermediate logics. In order to prove this he introduced a notion of strongly independent logics\footnote{In \cite{Troelstra_Intermediate_1965} A.~Trolstra defined independence of logics $\LogL_1$ and $\LogL_2$ as their incomparability, i.e. $\LogL_1 \nsubseteq \LogL_2$ and $\LogL_2 \nsubseteq \LogL_1$.}: a set of logics $\classL = \set{ \LogL_i}{i\in I}$ he called \textit{strongly independent} if neither logic $\LogL_i$ is included in the logic generated by the rest of the logics from $\classL,$ that is $\LogL_i \notin \cup'\set{\LogL_j}{j \neq i. j\in J }$ (where $\cup'$ is a closed union). One can easily construct a strongly independent set of logics if we have an independent set of formulas: a set of formulas $\classF = \set{ A_i}{ i\in I }$ is called \textit{independent} if neither formula $A_i$ is derivable from the rest of the formulas from $\classF$, that is $\classF \setminus \lbrace A_i \rbrace \nVdash A_i$ for every $i \in I$. Indeed, it is not hard to see that the logics defined by distinct subsets of an independent set of formulas $\classF$ form a strongly independent set of logics. The Theorem \ref{thrmord} states that a set of characteristic formulas $\classA = \set{ \chi(\AlgA_i)}{ i \in I}$ is independent if and only if for every $i \neq j; i,j \in I$ we have $\AlgA_i \not\leq \AlgA_j$ and $\AlgA_j \not\leq \AlgA_i$ , i.e. the set $\classA$ forms an \textit{anti-chain} relative to quasi-order. Thus, given an anti-chain of the finite s.i. algebras,  one can construct an independent set of formulas, namely the set of characteristic formulas of these algebras. 

Before we give an example of an anti-chain in $\FSIHeyt$ let us observe that by virtue of the Theorem \ref{thrmord} and the definition of the quasi-ordering
\begin{equation}
\AlgA \leq \AlgB \text{ if and only if } \AlgA \in \CSub\CHom \AlgB. \label{qasiorder}
\end{equation}

\begin{example} \label{exindep} The Heyting algebras depicted at Fig 2. by the diagrams and frames form an infinite anti-chain. Hence, the characteristic formulas of these algebras form an independent set $\classF$. Therefore any two distinct subsets of $\classF$ define the distinct intermediate logics. Thus, there is a continuum of intermediate logics (cf. \cite[Corollary 1]{Jankov_1968}) and there are not finitely axiomatizable intermediate logics (cf. \cite[Corollary 2 ]{Jankov_1968}), in fact, also a continuum.
  
\[
\ctdiagram{
\ctnohead
\ctinnermid
\ctel -100,15,-85,0:{}
\ctel -85,0,-55,30:{}
\ctel -100,15,-70,45:{}
\ctel -70,15,-85,30:{}
\ctel -55,30,-70,45:{}
\ctel -70,45,-70,75:{}
\ctv -85,0:{\bullet}
\ctv -100,15:{\bullet}
\ctv -70,15:{\bullet}
\ctv -55,30:{\bullet}
\ctv -85,30:{\bullet}
\ctv -70,45:{\bullet}
\ctv -70,60:{\bullet}
\ctv -70,75:{\bullet}
\ctv -85,-15:{\Z_7+\Z_2}
\ctel 0,15,15,0:{}
\ctel 15,0,45,30:{}
\ctel 0,15,30,45:{}
\ctel 30,15,0,45:{}
\ctel 45,30,15,60:{}
\ctel 0,45,15,60:{}
\ctel 15,60,15,90:{}
\ctv 15,0:{\bullet}
\ctv 0,15:{\bullet}
\ctv 30,15:{\bullet}
\ctv 45,30:{\bullet}
\ctv 15,30:{\bullet}
\ctv 30,45:{\bullet}
\ctv 0,45:{\bullet}
\ctv 15,60:{\bullet}
\ctv 15,75:{\bullet}
\ctv 15,90:{\bullet}
\ctv 15,-15:{\Z_9+\Z_2}
\ctel 100,15,115,0:{}
\ctel 115,0,145,30:{}
\ctel 100,15,145,60:{}
\ctel 130,75,145,60:{}
\ctel 130,15,100,45:{}
\ctel 145,30,115,60:{}
\ctel 100,45,130,75:{}
\ctel 130,75,130,105:{}
\ctv 115,0:{\bullet}
\ctv 100,15:{\bullet}
\ctv 130,15:{\bullet}
\ctv 145,30:{\bullet}
\ctv 115,30:{\bullet}
\ctv 130,45:{\bullet}
\ctv 100,45:{\bullet}
\ctv 115,60:{\bullet}
\ctv 130,75:{\bullet}
\ctv 130,90:{\bullet}
\ctv 130,105:{\bullet}
\ctv 130,75:{\bullet}
\ctv 145,60:{\bullet}
\ctv 115,-15:{\Z_{11}+\Z_2}
\ctv 160,30:{\cdot}
\ctv 165,30:{\cdot}
\ctv 170,30:{\cdot}
\ctv 175,30:{\cdot}
\ctel -100,-45,-85,-60:{}
\ctel -85,-75,-85,-60:{}
\ctel -85,-60,-70,-45:{}
\ctel -70,-45,-70,-30:{}
\ctv -100,-45:{\bullet}
\ctv -70,-30:{\bullet}
\ctv -70,-45:{\bullet}
\ctv -85,-60:{\bullet}
\ctv -85,-75:{\bullet}
\ctel 0,-45, 15,-60:{}
\ctel 15,-75, 15,-60:{}
\ctel 15,-60, 30,-45:{}
\ctel 30,-45,30,-30:{}
\ctel 30,-45,0,-30:{}
\ctel 0,-45,0,-30:{}
\ctv 0,-30:{\bullet}
\ctv 0,-45:{\bullet}
\ctv 30,-30:{\bullet}
\ctv 30,-45:{\bullet}
\ctv 15,-60:{\bullet}
\ctv 15,-75:{\bullet}
\ctel 100,-60, 115,-75:{}
\ctel 115,-90,115,-75:{}
\ctel 115,-75,130,-60:{}
\ctel 130,-60,130,-30:{}
\ctel 100,-60,100,-45:{}
\ctel 100,-45,130,-60:{}
\ctel 100,-60,130,-30:{}
\ctv 100,-60:{\bullet}
\ctv 130,-45:{\bullet}
\ctv 130,-60:{\bullet}
\ctv 115,-75:{\bullet}
\ctv 115,-90:{\bullet}
\ctv 100,-45:{\bullet}
\ctv 130,-30:{\bullet}
\ctv 160,-60:{\cdot}
\ctv 165,-60:{\cdot}
\ctv 170,-60:{\cdot}
\ctv 175,-60:{\cdot}
}
\]

\begin{center} Fig.2 An anti-chain in $\FSIHeyt$ \end{center}
\end{example}

\begin{remark} In \cite{Jankov_1968} V.~Jankov is using a different anti-chain. The anti-chain from the above example was also used in \cite{Blok_Continuum_1977} by W.~Blok in order to construct a continuum of intermediate logics without fmp.   
\end{remark}

\subsection{An Algebraic View at Intermediate Logics} \label{heytsem}

As we already mentioned, the Heyting algebras are the (algebraic) models for intermediate logics. With each intermediate logic $\LogL$ we can associate a set $\eqc(\LogL)$ of all the Heyting algebras in which all the formulas from $\LogL$ are valid, that is
\[
\eqc(\LogL) = \set{ \AlgA}{ (\forall A \in \LogL) \AlgA \Vdash A, \AlgA \in \Heyt }.
\]

It is not hard to see that the set $\eqc(\LogL)$ is closed under direct products, homomorphisms and subalgebras. Thus, $\eqc(\LogL)$ is a variety.

On the other hand, with each variety $\classV$ of Heyting algebras we can associate a logic $\LogL(\classV)$ consisting of all the formulas valid in every algebra from $\classV$:
\[
\LogL(\classV) = \set{ A}{ (\forall \AlgA \in \classV) \AlgA \Vdash A }.
\]

The class of all varieties of Heyting algebras, i.e. the class $\SubHeyt$ of all subvarieties of $\Heyt$, forms a complete lattice relative to set intersection $\cap$ and closed union $\classV_1 \cup' \classV_2 = \eqc(\classV_1 \cup \classV_2)$. The set \textbf{Ext}IPL of all extensions of IPL also forms a complete lattice relative to set intersection $\cap$ and closed union $\LogL_1 \cup' \LogL_2 = \LogL(\LogL_1 \cup \LogL_2)$. Recall that the mappings
\[
\phi: \LogL \impl \eqc(\LogL)  \text{ and } \psi: \classV \impl \LogL(\classV) 
\] 
are the dual isomorphisms between \textbf{Ext}IPL and $\SubHeyt$ and $\psi = \phi^{-1}$.

In general, if $\Frm' \subseteq \Frm$ is a set of formulas, by $\eqc(\Frm')$ we denote the variety of Heyting algebras in which all the formulas from $\Frm'$ are valid and $\LogL(\Frm') \bydef \set{ B}{ \Frm' \Vdash B; B \in \Frm }$. We write $\eqc(A)$ and $\LogL(A)$ in the case when $\Frm'$ consists of a single formula $A$. If $\classK \subseteq \Heyt$ is a class of algebras by $\eqc(\classK)$ we denote a variety generated by algebras $\classK$ and by $\LogL(\classK)$ we denote a logic that consists of all the formulas valid in each algebra from $\classK$. We write $\eqc(\AlgA)$ and $\LogL(\AlgA)$ in the case when $\classK$ consists of a single algebra $\AlgA$.

Next, we observe the very important property of characteristic formulas: if $\AlgA \in \FSIHeyt$ then
\begin{equation}
\eqc(\chi(\AlgA)) = \max\set{ \classV' \in \SubHeyt}{ \AlgA \notin \classV' }, \label{hsplit}
\end{equation}
that is $\eqc(\chi(\AlgA))$ is a largest subvariety of $\Heyt$ not containing algebra $\AlgA$. Indeed, if $\eqc(\chi(\AlgA)) \subset \classV \subseteq \Heyt$ then $\chi(\AlgA)$ is invalid in some algebra $\AlgB \in \classV$ and, by virtue of the Theorem \ref{thrmord}(d), $\AlgA$ is isomorphic with some subalgebra of a homomorphic image of $\AlgB$, therefore, $\AlgA \in \classV$.

Recall from \cite{Day_1973,Day_1975} that an algebra $\AlgA$ is said to be a \textit{splitting algebra in a variety} $\classV$ if there is a largest subvariety of $\classV$ not containing $\AlgA$. The variety generated by a splitting algebra $\AlgA$ we will call a \textit{splitting variety} and the largest variety not containing $\AlgA$ we call a \textit{co-splitting variety}. A splitting variety is generated by a finitely generated s.i. algebra (cf. \cite{McKenzie_1972}), while a co-splitting variety is defined by a formula that we will call a \textit{splitting formula}. Thus we have established the following

\begin{prop} In $\Heyt$  every finite s.i. algebra $\AlgA$ is a splitting algebra and $\chi(\AlgA)$ is a splitting formula. 
\end{prop}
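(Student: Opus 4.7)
The plan is to observe that the displayed equation~\eqref{hsplit} already contains essentially all the content of the proposition; it only remains to recast it in the splitting terminology just introduced. In particular, \eqref{hsplit} asserts that $\eqc(\chi(\AlgA))$ is the largest subvariety of $\Heyt$ avoiding $\AlgA$, which is by definition both (i) the assertion that $\AlgA$ is a splitting algebra in $\Heyt$ and (ii) the assertion that $\chi(\AlgA)$ axiomatizes the corresponding co-splitting variety, i.e.\ is a splitting formula. So the proof is a two-line citation.

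Since \eqref{hsplit} was stated with a one-line justification, I would briefly rewrite that justification for completeness. First, by \eqref{selfrefut} we have $\AlgA \not\models \chi(\AlgA)$, so $\AlgA \notin \eqc(\chi(\AlgA))$. Second, suppose $\classV' \in \SubHeyt$ satisfies $\AlgA \notin \classV'$; I claim $\classV' \subseteq \eqc(\chi(\AlgA))$. Assuming the contrary, pick $\AlgB \in \classV'$ with $\AlgB \not\models \chi(\AlgA)$. By Theorem~\ref{thrmord}(d) this gives $\AlgA \in \CSub\CHom \AlgB$, and since $\classV'$ is a variety (hence closed under $\CSub$ and $\CHom$) we conclude $\AlgA \in \classV'$, contradicting the choice of $\classV'$. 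Combining the two observations yields \eqref{hsplit}.

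Applying now the definitions of Day recalled in the preceding paragraph: the existence of the largest subvariety $\eqc(\chi(\AlgA))$ of $\Heyt$ not containing $\AlgA$ means precisely that $\AlgA$ is a splitting algebra of $\Heyt$, and the axiomatization of this co-splitting variety by the single formula $\chi(\AlgA)$ means precisely that $\chi(\AlgA)$ is a splitting formula. No genuine obstacle arises; the only mild point to watch is ensuring that the variety closure (under $\CSub$ and $\CHom$) is invoked correctly when chasing $\AlgA$ back into $\classV'$, which is exactly what Theorem~\ref{thrmord}(d) is designed for.
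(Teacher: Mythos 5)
Your proposal is correct and follows essentially the same route as the paper: the paper likewise derives the proposition directly from \eqref{hsplit} (justified via \eqref{selfrefut} and Theorem \ref{thrmord}(d) together with closure of varieties under $\CSub$ and $\CHom$) and then simply invokes Day's definitions of splitting algebra and splitting formula. Your slightly more explicit restatement of the maximality argument for \eqref{hsplit} is a harmless expansion of the paper's one-line justification, not a different method.
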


Later we will see that converse statement is true also. But first let us  prove one of the principal results from \cite{Jankov_1969}, namely, let us show that every characteristic formula is $\land$-irreducible.

Recall from \cite{Jankov_1969} that a formulas $A$ is $\land$\textit{-irreducible} (in IPC) if for any formulas $B$ and $C$
\[
(B \land C) \sim A \text{ entails } B \sim A \text{ or } C \sim A 
\] 
and $A$ is called $\land$-\textit{prime }(in IPC) if
\[
B , C \Vdash A \text{ entails } B \Vdash A \text{ or } C \Vdash A. 
\] 

A formula $A$ is \textit{strictly }$\land$\textit{-prime} (comp. \cite{McKenzie_1972}) if for any set of formulas $\Gamma$
\[
\Gamma \Vdash A \text{ entails } B \Vdash A \text{ for some } B \in \Gamma. 
\]

Clearly, any strictly $\land$-prime formulas is $\land$-prime and any $\land$-prime formula is $\land$-irreducible. Recall that relation $\Vdash$ is finitary, that is for any formula $A$ and any set of formulas $\Gamma$
\[
\Gamma \Vdash A \text{ if and only if } \Gamma' \Vdash A \text{ for some finite } \Gamma' \subseteq \Gamma. 
\]
Hence, due to properties of $\land$, if a formula $A$ is $\land$-prime it is strictly $\land$-prime.

Observe that any formula that defines a co-splitting variety is $\land$-prime (cf. \cite{McKenzie_1972}). Indeed, if a formula $A$ is not $\land$-prime one of its $\land$-factors would define a strongly greater variety not containing the splitting algebra and this contradicts the definition of splitting. Thus every characteristic formula is $\land$-prime. The proof of the converse statement is one of the principal results of \cite{Jankov_1969}.

\begin{theorem} (comp. \cite{Jankov_1969}) If $A$ is a $\land$-prime formula and $\nvdash A$, then $A$ is interderivable with some characteristic formula.
\end{theorem}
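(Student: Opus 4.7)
The plan is to single out, from among the pre-true algebras of $A$, one particular $\AlgA^\star$ for which $\chi(\AlgA^\star) \Vdash A$; the $\land$-prime hypothesis will do the selection. Let $\mathcal{P}$ denote the family of finite s.i.\ Heyting algebras in which $A$ is pre-true, considered up to isomorphism. Proposition \ref{pre-true} together with Proposition \ref{pre-true1} ensures $\mathcal{P} \neq \emptyset$, and item (Ded) of the Jankov Theorem gives $A \Vdash \chi(\AlgA')$ for every $\AlgA' \in \mathcal{P}$. It remains to find $\AlgA^\star \in \mathcal{P}$ with $\chi(\AlgA^\star) \Vdash A$.

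Set $\Gamma = \{\chi(\AlgA') : \AlgA' \in \mathcal{P}\}$. The key step is to show $\Gamma \Vdash A$. Assume for contradiction that some Heyting algebra $\AlgB$ validates $\Gamma$ but refutes $A$ under a valuation $\mu$. Replacing $\AlgB$ by the subalgebra generated by $\mu(p_1), \ldots, \mu(p_n)$ and then by an s.i.\ subdirect factor refuting $A$, I obtain a finitely generated s.i.\ Heyting algebra $\AlgB'$ that still validates $\Gamma$ and still refutes $A$. In $\Heyt$, a finitely generated s.i.\ algebra is finite --- a consequence of the finite model property of IPC together with the congruence-distributive/EDPC structure of $\Heyt$ noted in the paper --- so $\AlgB'$ is finite. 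A $\CSub\CHom$-descent inside $\AlgB'$, of the kind implicit in the proof of Proposition \ref{pre-true} (iterate: pass to an s.i.\ quotient refuting $A$, then to a minimal subalgebra refuting $A$; the process terminates by finiteness), yields $\AlgA^\star \in \CSub\CHom\,\AlgB'$ that is finite, s.i., and pre-true for $A$. Hence $\AlgA^\star \in \mathcal{P}$, so $\chi(\AlgA^\star) \in \Gamma$ is validated by $\AlgB'$; yet Proposition \ref{prophom} gives $\AlgB' \not\models \chi(\AlgA^\star)$ from $\AlgA^\star \in \CSub\CHom\,\AlgB'$, a contradiction.

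Once $\Gamma \Vdash A$ is established, strict $\land$-primality of $A$ (equivalent to $\land$-primality since $\Vdash$ is finitary, as the paper points out) delivers some $\AlgA^\star \in \mathcal{P}$ with $\chi(\AlgA^\star) \Vdash A$. Combined with $A \Vdash \chi(\AlgA^\star)$ from the opening step, this gives $A \sim \chi(\AlgA^\star)$, completing the proof.

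The principal obstacle is the finite-reduction step for an arbitrary $\AlgB \models \Gamma$ refuting $A$: the co-splitting-style variety $\eqc(\Gamma)$ need not be finitely approximable, so one cannot appeal to the finite model property of $\eqc(\Gamma)$ directly. The argument instead rests on the structural fact that finitely generated s.i.\ Heyting algebras are finite, which permits descent to a finite counterexample inside $\AlgB$. Apart from this, the proof is essentially combinatorial: (Ded) and Proposition \ref{prophom} supply the two directions of $\sim$, and the $\land$-prime hypothesis is used exactly once, to extract a single pre-true witness $\AlgA^\star$ from the family $\mathcal{P}$.
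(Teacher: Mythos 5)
There is a genuine gap, and it sits exactly where you place the weight of the argument: the ``structural fact that finitely generated s.i.\ Heyting algebras are finite'' is false. What is true (and what the paper records right after Theorem \ref{thrmord}) is that finitely \emph{presented} s.i.\ Heyting algebras are finite; finite generation is much weaker. The algebra $\Z_\infty + \Z_7 + \Z_2$ of Kuznetsov--Gerchiu discussed in Section \ref{HeytPreTrue} is infinite, s.i., and finitely generated (indeed, by Proposition \ref{pretr}, any algebra possessing a pre-true formula is automatically finitely generated and s.i.). So your reduction of an arbitrary $\AlgB$ with $\AlgB \models \Gamma$ and $\AlgB \not\models A$ to a finite counterexample does not go through. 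Worse, the intermediate claim $\Gamma \Vdash A$, which you argue for arbitrary unprovable $A$ without ever invoking $\land$-primeness, is actually false in that generality: if $A$ is pre-true in an infinite algebra $\AlgD$ (such pairs exist by \cite{Kuznetsov_Gerchiu,Wronski_1973}), then every finite member of $\CSub\CHom\AlgD$ is a proper subquotient of $\AlgD$ and hence validates $A$, so by Proposition \ref{prophom} the algebra $\AlgD$ validates $\chi(\AlgA')$ for every $\AlgA'$ in your family $\mathcal{P}$ while refuting $A$. Since the false step is the one step that does not use the hypothesis on $A$, the argument cannot be repaired locally; $\land$-primeness has to enter before one ever forms a global set such as $\Gamma$.

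The paper's proof avoids all of this by working inside a single finite algebra $\AlgA$ in which $A$ is pre-true (it exists by the fmp, Proposition \ref{pre-true}). One lists the finitely many valuations $\nu_1,\dots,\nu_m$ of $\Vars(A)$ refuting $A$ in $\AlgA$, forms the characteristic formulas $\chi(\AlgA,\Vars(A),\nu_i)$, and introduces the auxiliary formula $B \bydef \bigwedge_{i}\chi(\AlgA,\Vars(A),\nu_i) \impl A$. Then $\chi(\AlgA,\Vars(A),\nu_1),\dots,\chi(\AlgA,\Vars(A),\nu_m), B \Vdash A$ trivially, so $\land$-primeness forces $A$ to be interderivable with one of these $m+1$ formulas; and $B$ is excluded because $\AlgA \models B$: any valuation refuting $B$ must refute $A$ and hence equals some $\nu_i$, but $\nu_i$ sends the conjunct $\chi(\AlgA,\Vars(A),\nu_i)$ of the premise of $B$ to $\omega(\AlgA) = \nu_i(A)$, so the premise lies below the conclusion and $\nu_i(B) = \one$. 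If you replace your global family $\Gamma$ with this local, finite family, your outline becomes correct --- but at that point it is essentially the paper's proof.
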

\begin{proof} Since $\nvdash A$, by virtue of the Proposition \ref{pre-true}, there is a finite algebra $\AlgA$ in which formula $A$ is pre-true. By the Proposition \ref{pre-true1}(b) the algebra $\AlgA$ is s.i. Let us prove that $A \sim \chi(\AlgA)$.

Since $\AlgA \not\models A$, by (Ded) $A \Vdash \chi(\AlgA)$ and we need only to prove $\chi(\AlgA) \Vdash A$.

Let $\Vars_0 = \Vars(A)$ and $\nu_1,\dots,\nu_m$ are all the valuations refuting $A$ in $\AlgA$, that is $\nu_i: \Vars_0 \impl \AlgA$ and $\nu_i(A) \neq \one$ for all $i=1,\dots,m$. Since $A$ is pre-true we can apply the Proposition \ref{pre-true1}(c) and conclude that
\begin{equation}
\nu_i(A) = \omega(\AlgA) ; i=1,\dots,m. \label{omega}
\end{equation}
Now, let us consider the formula
\[
B \bydef \land_{i=1,m}\chi(\AlgA,\Vars_0,\nu_i) \impl A.
\]
Clearly,
\[
\chi(\AlgA,\Vars_0,\nu_1),\dots,\chi(\AlgA,\Vars_0,\nu_m), B \Vdash A.
\]
Recall that $A$ is $\land$-prime, hence $A$ is interderivable either with $\chi(\AlgA,\Vars_0,\nu_i)$ for some $i; 1 \leq i \leq m$, or $A$ is interderivable with $B$. Thus, in order to finish the proof it suffices to demonstrate that $A$ is not interderivable with $B$. Let us show that $\AlgA \models B$ while, as we know, $\AlgA \not\models A$

Proof by contradiction: let us assume that $\AlgA \not\models B$ and $\nu$ is a refuting valuation. Then $\nu(A) \neq \one$ and therefore $\nu = \nu_i$ for some $i, 1 \leq i \leq m$. By \eqref{omega} $\nu_i(A) = \omega(\AlgA)$. On the other had, $\chi(\AlgA,\Vars_0,\nu_i)$ is one of the conjuncts of the premise of $B$. By the definition of characteristic formula 
\[
\nu_i(\chi(\AlgA,\Vars_0,\nu_i)) = \omega(\AlgA) = \nu_i(A).
\]
The latter contradicts that $\nu_i$ is a refuting valuation. 
 \end{proof}

\begin{cor} A formula $A$ is $\land$-prime if and only if it is interderivable with some characteristic formula.
\end{cor}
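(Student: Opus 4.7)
The plan is to dispatch each direction of the biconditional by invoking one ingredient from what immediately precedes. The forward implication (from $\land$-prime to interderivability with a characteristic formula) is essentially a restatement of the theorem just proved: provided $\nvdash A$, the theorem produces a finite s.i.\ Heyting algebra $\AlgA$ in which $A$ is pre-true and such that $A \sim \chi(\AlgA)$. The only subtlety is the boundary case $\vdash A$, since such an $A$ is vacuously $\land$-prime while no characteristic formula is derivable (because $\AlgA \not\models \chi(\AlgA)$ by \eqref{selfrefut}). I would handle this either by restricting the statement to non-theorems, or by noting explicitly that the corollary should be read under the standing assumption $\nvdash A$ carried over from the theorem.

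For the converse, suppose $A \sim \chi(\AlgB)$ for some finite s.i.\ Heyting algebra $\AlgB$. By \eqref{hsplit} the variety $\eqc(\chi(\AlgB))$ is the largest subvariety of $\Heyt$ not containing $\AlgB$, so $\chi(\AlgB)$ is a splitting formula for the co-splitting variety of $\AlgB$. The observation recalled from \cite{McKenzie_1972} just before the theorem (if a $\land$-factor of a splitting formula defined a strictly larger variety still excluding $\AlgB$, this would contradict maximality) then gives that $\chi(\AlgB)$ itself is $\land$-prime. To transfer $\land$-primality from $\chi(\AlgB)$ to $A$, I would check that this property is stable under $\sim$: if $A \sim A'$ with $A'$ $\land$-prime, then for any $B, C$ with $B, C \Vdash A$, transitivity of $\Vdash$ together with $A \Vdash A'$ yields $B, C \Vdash A'$; $\land$-primality of $A'$ then gives $B \Vdash A'$ or $C \Vdash A'$, and composing with $A' \Vdash A$ produces the required $B \Vdash A$ or $C \Vdash A$.

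There is no genuine obstacle here; all the substance is already absorbed by the preceding theorem and by the cited fact that splitting formulas are $\land$-prime. The corollary is the packaging of those two ingredients with the routine verification that $\land$-primality respects interderivability, and I would write it up in a few lines.
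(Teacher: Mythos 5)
Your proposal is correct and follows exactly the route the paper intends: the forward direction is the theorem just proved, and the converse is the observation (made right before the theorem, via the splitting argument of \eqref{hsplit} and \cite{McKenzie_1972}) that every characteristic formula is $\land$-prime, combined with the routine check that $\land$-primality is preserved under $\sim$. Your two extra remarks---the boundary case $\vdash A$ (which the corollary's statement silently inherits from the theorem's hypothesis $\nvdash A$) and the explicit verification that $\land$-primality respects interderivability---are precisely the details the paper leaves implicit, so there is nothing to object to.
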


\subsection{Characteristic Formulas vis a vis Pre-true Formulas} \label{HeytPreTrue}

Let us observe that from the Theorem  \ref{thrmord} it follows that the characteristic formula $\chi(\AlgA)$ of any finite s.i. algebra $\AlgA$ is pre-true in $\AlgA$. Moreover, the following holds.

\begin{prop}
Suppose $\AlgA_1,\dots,\AlgA_n$ is an anti-chain of finite s.i. algebras. Then the formula $A \bydef \wedge_{i=1}^n\chi(\AlgA_i)$ is pre-true in every algebra $\AlgA_i; 1 \leq i \leq n$.  
\end{prop}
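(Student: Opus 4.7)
The plan is to verify each of the three defining clauses of pre-trueness for $A \bydef \bigwedge_{i=1}^n \chi(\AlgA_i)$ in a fixed $\AlgA_i$: that $A$ is refuted in $\AlgA_i$, valid in every proper subalgebra of $\AlgA_i$, and valid in every proper homomorphic image of $\AlgA_i$.

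Refutation is immediate from \eqref{selfrefut}: since $\AlgA_i \not\models \chi(\AlgA_i)$ and $\chi(\AlgA_i)$ is a conjunct of $A$, the whole conjunction is refuted in $\AlgA_i$.

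For the remaining two clauses, fix $i$ and let $\AlgB$ be a proper subalgebra or a proper homomorphic image of $\AlgA_i$; it suffices to show $\AlgB \models \chi(\AlgA_j)$ for every $j$. The diagonal case $j=i$ is the observation recorded just before the proposition: $\chi(\AlgA_i)$ is pre-true in $\AlgA_i$ (a direct consequence of the Theorem \ref{thrmord}), so it is valid in every proper subalgebra and every proper homomorphic image of $\AlgA_i$. For $j \neq i$, the anti-chain hypothesis yields $\AlgA_j \not\leq \AlgA_i$, which by the Theorem \ref{thrmord} (equivalently by Proposition \ref{prophom}) is the same as $\AlgA_i \models \chi(\AlgA_j)$. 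Since validity of a formula is equivalent to validity of the identity $\chi(\AlgA_j) \approx \one$, it is inherited by subalgebras and homomorphic images, whence $\AlgB \models \chi(\AlgA_j)$. Combining the two cases gives $\AlgB \models A$.

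There is no substantive obstacle; the only delicate point is recognizing that the anti-chain hypothesis controls only the off-diagonal conjuncts $\chi(\AlgA_j)$ for $j \neq i$, so the diagonal conjunct $\chi(\AlgA_i)$ must be handled separately by invoking its own pre-trueness in $\AlgA_i$.
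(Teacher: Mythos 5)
Your proof is correct and follows essentially the same route as the paper's (much terser) proof: the diagonal conjunct is handled by the pre-trueness of $\chi(\AlgA_i)$ in $\AlgA_i$, and the off-diagonal conjuncts by the anti-chain condition together with Theorem \ref{thrmord}, validity being inherited by subalgebras and homomorphic images. You have merely spelled out the details the paper leaves implicit.
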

\begin{proof} First, let us note that formula $\chi(\AlgA_i)$ is pre-true in $\AlgA_i$. Then, from the definition of anti-chain and by virtue of the Theorem \ref{thrmord},  we can conclude $\AlgA_j \models \chi(\AlgA_i)$ for any $i \neq j; i,j = 1,\dots,n$.
\end{proof}

On the other hand, not every pre-true formula is interderivable with a characteristic formula. 

\begin{prop} Suppose $\AlgA$ is an infinite algebra and a formula $A$ is pre-true in $\AlgA$. Then the formula $A$ is not interderivable with any characteristic formula.
\end{prop}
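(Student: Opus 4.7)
The plan is to argue by contradiction. Suppose, toward a contradiction, that $A \sim \chi(\AlgB)$ for some finite s.i.\ Heyting algebra $\AlgB$. (In the Heyting setting every finitely presented s.i.\ algebra is finite, so ``some characteristic formula'' amounts to some $\chi(\AlgB)$ with $\AlgB$ finite.) The strategy is to exploit the two defining clauses of pre-trueness --- ``every proper subalgebra and every proper homomorphic image of $\AlgA$ validates $A$'' --- together with the dictionary between refutations of $\chi(\AlgB)$ and the operator $\CSub\CHom$ supplied by the Proposition \ref{prophom}, in order to force $\AlgB \models A$ and contradict the interderivability with $\chi(\AlgB)$.

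First I would record the elementary consequence of the assumption: since $\AlgB \not\models \chi(\AlgB)$ by \eqref{selfrefut} and validity is a d-stable property, $A \sim \chi(\AlgB)$ yields $\AlgB \not\models A$. Next, using that $A$ is refuted in $\AlgA$ (pre-trueness) together with the interderivability, $\AlgA \not\models \chi(\AlgB)$; so the Proposition \ref{prophom} produces a congruence $\theta$ on $\AlgA$ together with an embedding $\iota \colon \AlgB \hookrightarrow \AlgA/\theta$.

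I would then case-split on whether $\theta$ is trivial. If $\theta$ is non-trivial, then $\AlgA/\theta$ is a \emph{proper} homomorphic image of $\AlgA$, so by pre-trueness $\AlgA/\theta \models A$; since validity is inherited by subalgebras and $\iota(\AlgB) \subseteq \AlgA/\theta$, we get $\AlgB \models A$, contradicting the first observation. If $\theta$ is trivial, $\iota$ realizes $\AlgB$ as a subalgebra of $\AlgA$ itself; since $\AlgB$ is finite and $\AlgA$ infinite, the image $\iota(\AlgB)$ is necessarily a \emph{proper} subalgebra of $\AlgA$, and pre-trueness again yields $\AlgB \models A$, the same contradiction.

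No step is really a serious obstacle: the whole proof is a short translation through the $\CSub\CHom$-dictionary. The hypothesis that $\AlgA$ is infinite enters in precisely one place, namely in ruling out that the embedding $\iota \colon \AlgB \hookrightarrow \AlgA$ of the trivial-congruence case be onto. This is also the reason the statement genuinely needs $\AlgA$ to be infinite: for finite s.i.\ $\AlgA$ one typically \emph{does} have $A \sim \chi(\AlgA)$ when $A$ is pre-true in $\AlgA$, by the converse-type arguments of the preceding subsection.
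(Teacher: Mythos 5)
Your argument is correct and follows essentially the same route as the paper: assume $A \sim \chi(\AlgB)$, deduce $\AlgA \not\models \chi(\AlgB)$, apply the refutation criterion to place $\AlgB$ in $\CSub\CHom\AlgA$, and use finiteness of $\AlgB$ versus infiniteness of $\AlgA$ to conclude that $\AlgB$ sits inside a proper subalgebra or proper homomorphic image, whence $\AlgB \models A$, a contradiction. Your explicit case split on whether the congruence $\theta$ is trivial is just a more detailed rendering of the paper's single sentence covering both cases.
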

\begin{proof} For contradiction: assume that $A$ is interderivable with a characteristic formula of some algebra $\AlgB$. Then, since $\AlgA \not\models A$, we have $\AlgA \not\models \chi(\AlgB)$. Hence, by the Theorem \ref{thrmord}, $\AlgB \in \CSub\CHom\AlgA$. The latter means that $\AlgB$ is either a proper subalgebra of $\AlgA$, or a subalgebra of a proper homomorphic image of $\AlgA$, for $\AlgB$ is finite while $\AlgA$ is not. By the definition of pre-true formula, $\AlgB \models A$ and, hence, $\AlgB \models B$ which is not true.
\end{proof}

The examples of infinite algebras that have pre-true formulas can be found in \cite{Kuznetsov_Gerchiu,Wronski_1973}. 

Let us also note that algebra $\Z_\infty + \Z_7 + \Z_2$ from \cite{Kuznetsov_Gerchiu} has a pre-true formula. Clearly, infinite s.i. algebra cannot be finitely presented in $\Heyt$ because $\Heyt$ is finitely approximated. It is natural to ask whether this algebra is finitely presented in some subvariety $\Heyt'$ of $\Heyt$ (which may be not finitely approximated) and, thus, the pre-true formula may be interderivable with a characteristic formula relative to $\Heyt'$. The negative answer to this question was given in \cite[Corollary 2.15]{Citkin_Splitting_2012}: the algebra $\Z_\infty + \Z_7 + \Z_2$ is not finitely presented in any subvariety of $\Heyt$. In the Section \ref{PreTrue} we will discuss the relations between characteristic and pre-true formulas in more details.

\subsection{Characteristic Formulas vis a vis Splitting Formulas} \label{charsplit}

As we already know, the splitting algebras in $\Heyt$ are exactly the finite s.i. algebras and that the finitely presented in $\Heyt$ s.i. algebras are exactly the finite s.i. algebras too. Thus, in $\Heyt$ the sets of splitting and characteristic formulas coincide. But use of the defining formulas instead of diagram formulas makes characteristic formula relative to a variety: an algebra can be not finitely presented in the variety $\Heyt$, but it can be finitely presented in a subvariety $\Heyt' \subset \Heyt$. The similar argument can be applied to a splitting algebra: algebra $\AlgA$ may be not a splitting algebra in $\Heyt$, but it can be a splitting algebra in some subvariety $\Heyt' \subset \Heyt$. Recall also that not every subvariety of $\Heyt$ is finitely approximated. Later we will prove (the Theorem \ref{hom}) for a broad class of varieties (that includes all non-trivial varieties of Heyting algebras) that every finitely presented s.i. algebra is a splitting algebra (which is true for all varieties with EDPC \cite[Corollary 3.2]{Blk_Pgz_1}). This means that in every subvariety $\Heyt' \subseteq \Heyt$ any characteristic (relative to this subvariety) formula is indeed a splitting formula. The converse though is not true: by the Theorem 3.5. from  \cite{Citkin_Splitting_2012} the algebra $\Z_\infty + \Z_7 + \Z_2$ splits the variety $\Heyt'$ that it generates. On the other hand, by \cite[Lemma 2.13 ]{Citkin_Splitting_2012} all finitely presented  in $\Heyt'$ s.i. algebras are finite and belong to the co-splitting variety. Thus, the co-splitting variety of the splitting defined by $\Z_\infty + \Z_7 + \Z_2$ cannot be defined by any characteristic formula and, therefore, this splitting formula is not characteristic.

\subsection{Characteristic Formulas in the Fragments} In \cite{Jankov_1969} V.~Jankov was considering not only intuitionistic logic but also its $\land, \lor, \impl$ and $\land, \impl$ fragments: the logics defined by the axioms of IPC containing just the variables from the respective sets. The simple analysis of definitions and arguments shows that all statements that we had proved hold for these two logics as well. 

In \cite{Jankov_1969} V.~Jankov was also considering the minimal logic and its fragments. The minimal logic is a logic in the signature $\land,\lor,\impl,\neg,\bot$ where $\neg p$ is an abbreviation for $p \impl \bot$ defined by the axioms of IPC not containing $\neg$. The algebraic models for the minimal logic are distributive lattices with relative pseudocomplement and a constant $\bot$. For minimal logic and its fragments we also can repeat all the definitions and proofs.

In \cite{Dziobiak_Finite_1982} the characteristic and pre-true formulas were used in order to construct a continuum of fragments containing $\lor$ and $\impl$ and lacking fmp.

It is worth noticing that the definition of characteristic formula and its properties are relaying on relations between implication and the congruences. This will allow us (in the Section \ref{alg}) to extend the notion of characteristic formula to very broad classes of logics.

\subsection{Further Developments}

Soon after the characteristic formulas were introduced it became clear that they give us a very convenient and powerful tool in studying the intermediate logics or varieties of Heyting algebras. So, it was natural to generalize this notion. First, in the Section \ref{alg} we will discuss how the notion of characteristic formula can be extended to a very broad classes of logics (varieties of algebras). Then, in the Section \ref{inf}, we will see how one can construct characteristic formulas for infinite algebras. And in the Section \ref{axiom} we will learn how characteristic formulas can be used for axiomatization and how an inability to axiomatize every logic by characteristic formulas led to the notion of canonical formula. Lastly, we also show how characteristic formulas can be used as a syntax mean of refutation.         

\section{Characteristic Formulas in the Algebraic Settings} \label{alg}
 
In this section we will generalize the notion of the characteristic formula to varieties with a ternary deductive term (TD term for short) introduced in \cite{Blk_Pgz_3}. As we will see in the Section \ref{secexamples}, the vast majority of logics has as algebraic semantic a variety with a TD term. 

\subsection{Algebraic Semantic} \label{algsemsec}

We consider a \textit{(propositional) language} $\Lang$ consisting of a denumerable set of (propositional) variables $\Vars$ and a finite set of connectives $\Con$. The \textit{formulas} are built in a regular way from the variables from $\Vars$ by using the connectives from $\Con$. The set of all formulas is denoted by $\Frm$.

A \textit{deductive system} is a pair $\DS = \lbr \Lang, \vdash_\DS \rbr$ where $\vdash_\DS$ is a \textit{consequence }relation defined on the subsets of $\Frm$ and formulas and $\vdash_\DS$ satisfies the following conditions: for all $\Gamma, \Delta \subset \Frm$ and $ A \in \Frm$
\begin{itemize}
\item[(i)] $A \in \Gamma$ implies $\Gamma \vdash_\DS A$;
\item[(ii)] $\Gamma \vdash_\DS A$ and $\Gamma \subseteq \Delta$ implies $\Delta \vdash_\DS A$;
\item[(iii)] $\Gamma \vdash_\DS A$ and $\Delta \vdash_\DS B$ for every $B \in \Gamma$ implies $\Delta \vdash_\DS A$;
\item[(iv)] $\Gamma \vdash_\DS A$ implies $\Gamma' \vdash_\DS A$ for some finite $\Gamma' \subseteq \Gamma$;
\item[(v)] $\Gamma \vdash_\DS A$ implies $\sigma(\Gamma) \vdash_\DS \sigma(A)$ for every substitution $\sigma$
\end{itemize}
(here and later by $\sigma(\Gamma)$ we denote the set obtained by applying $\sigma$ to each element of $\Gamma$). From this point forward we assume that some language is given and we will interchangeably use terms "deductive system" and "consequence relation". If $\Gamma \vdash_\DS A$ we say that formula $A$ is \textit{derivable from the formulas} $\Gamma$. The formulas derivable from the empty set we call \textit{theorems} (of the underlaying deductive system $\DS$). And the set of all theorems of $\DS$ we call a \textit{logic of} $\DS$ (and we will omit the reference to $\DS$ when no confusion arises). The logic of a deductive system $\DS$ is denoted by $\LogL(\DS)$ (or $\LogL(\vdash_\DS)$). 

The algebras in the signature $\Con$ (or $\Lang$-\textit{algebras}) we be used as the models, and we are interpreting connectives in an obvious way. We also assume that there is a mapping $\epsilon$ from $\Frm$ into the collection of finite sets of identities and we will call $\epsilon$ a \textit{translation}. A deductive system $\DS$ is said to have a quasivariety $\qvar$ (of $\Lang$-algebras) as an \textit{equivalent algebraic semantic} (comp. \cite{Blk_Pgz_Algebraizable}) if there is such a translation $\epsilon$ that for any $A_1,\dots,A_n,A \in \Frm$
\[
A_1,\dots,A_n \vdash_\DS A \text{ if and only if } \epsilon(A_1),\dots, \epsilon(A_n) \dimpl \epsilon(A) \text{ hold in } \qvar, 
\]
where $\epsilon(A_1),\dots, \epsilon(A_n) \dimpl \epsilon(A) $ is a set of quasi-identities 
\[
 \epsilon(A_1),\dots, \epsilon(A_n) \dimpl \idn ; \idn \in \epsilon(A).  
\]
It is clear that if $\qvar$ is an equivalent algebraic semantic for a deductive system $\DS$, then $A \in \LogL(\DS)$ if and only if the identities $\epsilon(A)$ hold in every algebra of a variety generated by $\qvar$. We will call this variety an \textit{equivalent algebraic semantic} (EAS) for the logic $\LogL = \LogL(\DS)$ and denote it by $\eqc(\LogL))$. For the  sake of simplicity we will be using variables $\Vars$ as object variables, thus formulas also will be terms in algebraic sense, i.e if $A,B$ are formulas then $A \approx B$ is an identity.

\begin{example} The variety of Heyting algebras is an EAS for the $\rm IPL$ (in the signature $\land, \lor, \impl, \neg, \one$) with translation $\epsilon(A) \mapsto A \approx \one$. The variety of interior algebras in the signature $\land, \lor, \impl,\neg, \one,\Box$ with translation $\epsilon(A) \mapsto \Box A \approx \one$ is an EAS for the modal logic $\rm S4$. 
\end{example}

From this point forward we consider only the logics with EAS, and this makes it possible to employ the algebraic means and study the respective varieties of algebras that represent logics rather than logics themselves. 

\subsection{Characteristic Identities in the Varieties with a TD term}

In this section we extend the notion of characteristic formula to the varieties with a TD term.

\subsubsection{Basic Definitions}

If $\AlgA$ is an algebra by $Con(\AlgA)$ we denote a set of all congruences on $\AlgA$ and
by $Con'(\AlgA)$ - the set of all distinct from identity congruences on $\AlgA$.

Let $\AlgA$ be an algebra, $\alga,\algb \in \algA$ and $\alga \neq \algb$.  We will call elements $\alga$ and $\algb$ \textit{indistinguishable} if for any congruence $\theta \in Con'(\AlgA)$
\[
\alga \equiv \algb \pmod{\theta}.
\] 
Clearly, an algebra is s.i. if and only if it has a pair of indistinguishable elements. It is easy to see that the following holds.

\begin{prop} Let $\AlgA$ be an algebra, $\alga,\algb \in \algA$ and $\alga \neq \algb$. Then the following are equivalent:
\begin{itemize}
\item[(a)] elements $\alga, \algb$ are indistinguishable;
\item[(b)] $\alga \not\equiv \algb \pmod{\mu(\AlgA)}$
\item[(c)] $\mu(\AlgA)= \theta(\alga, \algb)$;
\item[(d)] for any homomorphism $\phi$ that is not an isomorphism $\phi(\alga) = \phi(\algb)$. 
\end{itemize}
\end{prop}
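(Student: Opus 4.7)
The plan is a short cyclic argument (a) $\Rightarrow$ (b) $\Rightarrow$ (c) $\Rightarrow$ (a), together with the equivalence (a) $\Leftrightarrow$ (d) proved via kernels. Throughout, the decisive fact is that $\mu(\AlgA)$, when it exists, is the minimum element of $Con'(\AlgA)$; in particular its existence tells us that $\AlgA$ is s.i., and the observation made just above the proposition --- that $\AlgA$ is s.i.\ iff it has a pair of indistinguishable elements --- guarantees that under (a) we may speak of $\mu(\AlgA)$ at all.

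For the cycle, (a) $\Rightarrow$ (b) follows by applying the definition of indistinguishability to $\theta = \mu(\AlgA) \in Con'(\AlgA)$, which yields $\alga \equiv \algb \pmod{\mu(\AlgA)}$. (I read the ``$\not\equiv$'' printed in (b) as a typographical slip for ``$\equiv$''; otherwise (b) would directly contradict the definition of ``indistinguishable'' given on the preceding line.) For (b) $\Rightarrow$ (c), from $\alga \equiv \algb \pmod{\mu(\AlgA)}$ one gets $\theta(\alga,\algb) \subseteq \mu(\AlgA)$; since $\alga \neq \algb$, the congruence $\theta(\alga,\algb)$ is non-trivial, and minimality of $\mu(\AlgA)$ in $Con'(\AlgA)$ forces the reverse inclusion, hence equality. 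For (c) $\Rightarrow$ (a), the existence of $\mu(\AlgA)$ already tells us $\AlgA$ is s.i., so every $\theta \in Con'(\AlgA)$ contains $\mu(\AlgA) = \theta(\alga,\algb)$, which forces $\alga \equiv \algb \pmod{\theta}$.

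For the equivalence (a) $\Leftrightarrow$ (d) one translates everything through kernels. A homomorphism $\phi$ out of $\AlgA$ fails to be an isomorphism in the relevant sense --- i.e.\ fails to be an embedding, consistently with the usage in Proposition \ref{prembed} --- precisely when $\ker\phi$ is non-trivial, i.e.\ lies in $Con'(\AlgA)$. If (a) holds then any such $\ker\phi$ identifies $\alga$ and $\algb$, giving $\phi(\alga) = \phi(\algb)$; conversely, for any $\theta \in Con'(\AlgA)$ the canonical projection $\AlgA \to \AlgA/\theta$ is a non-injective homomorphism, and (d) applied to it yields $\alga \equiv \algb \pmod{\theta}$. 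The only real obstacle here is interpretive rather than mathematical: one must not read ``not an isomorphism'' as ``not bijective'', since a non-surjective embedding would then furnish an immediate counterexample; the argument requires the weaker reading ``not an embedding'' (equivalently, non-injective), which is the same convention employed in Proposition \ref{prembed}.
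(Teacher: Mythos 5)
Your proof is correct. The paper itself offers no argument for this proposition --- it is introduced only with ``It is easy to see that the following holds'' --- so there is nothing to compare against beyond the surrounding definitions; your cycle (a) $\Rightarrow$ (b) $\Rightarrow$ (c) $\Rightarrow$ (a) plus the kernel translation for (d) is precisely the intended routine verification. You are also right on both textual points: (b) must read $\alga \equiv \algb \pmod{\mu(\AlgA)}$, since the printed $\not\equiv$ would contradict the definition of indistinguishability applied to $\theta = \mu(\AlgA) \in Con'(\AlgA)$; and ``isomorphism'' in (d) must be read as ``embedding'' (injective homomorphism), as a non-surjective embedding would otherwise refute (a) $\Rightarrow$ (d) --- a reading confirmed by the usage in Proposition \ref{prembed} and Corollary \ref{monolith}.
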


Let us also note the following corollary.

\begin{cor} \label{monolith} Let $\AlgA$ be an s.i. algebra, $\alga,\algb$ be indistinguishable elements and $\phi: \AlgA \impl \AlgB$ be a homomorphism of $\AlgA$ in $\AlgB$. If $\phi(\alga) \neq \phi(\algb)$ then $\phi$ is an embedding.  
\end{cor}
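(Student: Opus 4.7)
The plan is to observe that the corollary is essentially the contrapositive of part (d) of the preceding proposition, recast in terms of an arbitrary homomorphism rather than a quotient map. So the work reduces to unwinding the hypothesis via the kernel.

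First I would set $\theta := \ker\phi$, which is a congruence on $\AlgA$. The assumption $\phi(\alga) \neq \phi(\algb)$ translates at once into $\alga \not\equiv \algb \pmod{\theta}$, so the hypothesis on $\phi$ has become a statement purely about a congruence on $\AlgA$.

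Next I would use that $\AlgA$ is s.i. with monolith $\mu(\AlgA)$, together with part (c) of the preceding proposition, which asserts $\mu(\AlgA) = \theta(\alga,\algb)$. Since $\mu(\AlgA)$ is contained in every non-trivial congruence on $\AlgA$, the element pair $(\alga,\algb)$ is identified modulo any $\theta' \in Con'(\AlgA)$. In particular, if $\theta$ were non-trivial we would have $\alga \equiv \algb \pmod{\theta}$, contradicting the previous step. Hence $\theta$ is the identity congruence, i.e.\ $\phi$ is injective, which is exactly what it means for $\phi$ to be an embedding.

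There is no real obstacle here: the whole point of the preceding proposition was to package the equivalence between the congruence-theoretic description of indistinguishability and its behavior under homomorphisms, and the corollary is simply the convenient reformulation one wants to cite in applications (e.g.\ in the proof of Proposition~\ref{prembed} generalized from Heyting algebras to the TD setting). If one prefers, the proof can be compressed to a single line by invoking (a)$\Rightarrow$(d) directly: $\phi(\alga)\neq\phi(\algb)$ forces $\phi$ to be an isomorphism (onto its image), i.e.\ an embedding.
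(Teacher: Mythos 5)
Your proof is correct and matches the paper's (implicit) argument: the paper states this as an immediate corollary of the preceding proposition without writing out a proof, and the kernel argument you give — $\phi(\alga)\neq\phi(\algb)$ forces $\ker\phi$ to miss the pair $(\alga,\algb)$, which by indistinguishability forces $\ker\phi$ to be the identity congruence — is exactly the intended one-line justification (equivalently, the contrapositive of item (d) of that proposition).
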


If $\AlgA$ is an algebra and $\idn \bydef t(x_1,\dots,x_m) \approx t'(x_1,\dots,x_m)$ is an identity, by $\AlgA \models \idn$ we express the fact that $\idn$ is valid in $\AlgA$, that is, for any $\alga_1,\dots,\alga_m \in \algA$ we have $t(\alga_1,\dots,\alga_m) = t'(\alga_1,\dots,\alga_m)$. If $\classK$ is a class of algebras and $\AlgA \models \idn$ for every $\AlgA \in \classK$ we write $\classK \models \idn$.

Let $\classK$ be a class of algebras and $\idn_1,\dots,\idn_n,\idn$ be identities. The identity $\idn$ is called \cite{MaltsevBook} a $\classK$-\textit{consequence} of $\idn_1,\dots,\idn_n$ (in symbols $\idn_1,\dots,\idn_n \vdash_\classK \idn$) if a quasi-identity $\idn_1,\dots,\idn_n \dimpl \idn$ holds in $\classK$. Identities $\idn_1$ and $\idn_2$ are $\classK$-\textsl{equivalent} if $\idn_1 \vdash_\classK \idn_2$ and $\idn_2 \vdash_\classK \idn_1$ (in symbols $\idn_1 \sim_\classK \idn_2$). We also say that $\idn$ $\classK$\textit{-follows} from $\idn_1,\dots,\idn_n$ (in symbols $\idn_1,\dots,\idn_n \vDash_\classK \idn_2$) if $\AlgA \models \idn$ as long as $\AlgA \models \idn_i$ for all $i=1,\dots,n$ and for every $\AlgA \in \classK$ . If $\idn_1 \vDash_\classK \idn_2$ and $\idn_2 \vDash_\classK \idn_1$  we say that identities $\idn_1$ and $\idn_2$ are $\classK$\textit{-equipotent} (in symbols $\idn_1 \approx_\classK \idn_2$). 
Clearly, if $\classV$ is a variety, then two identities are $\classV$-equipotent if and only if these identities define in $\classV$ the same subvarieties. 
It is easily seen that  
\begin{equation}
\idn_1 \sim_\classK \idn_2 \dimpl \idn_1 \approx_\classK \idn_2 \label{equiv}
\end{equation}
If $\Gamma_1,\Gamma2$ are sets of identities we say that $\Gamma_1$ and $\Gamma_2$ are \textit{equivalent} (in symbols $\Gamma_1 \sim_\classK \Gamma2$) if $\Gamma_1 \vdash_\classK \idn$ for every $\idn \in \Gamma_2$ and $\Gamma_2 \vdash_\classK \idn$ for every $\idn \in \Gamma_1$. If one of the sets, let say $\Gamma_2$, consists of a single identity $\idn$ we will omit braces and will write $\Gamma_1 \sim \idn$. In a similar way we will be using $\Gamma_1 \approx_\classK \Gamma_2$.

\subsubsection{TD Term}

Let $\classV$ be a variety of algebras in the finite signature $\Sign$. A ternary term $td(x,y,z)$ is called \cite{Blk_Pgz_3} a \textit{ternary deductive term of variety} $\classV$ (a TD term) if for any algebra $\AlgA \in \classV$ and any elements $\alga,\algb,\algc,\algd$
\begin{equation}
\begin{split}
 & td(\alga, \alga, \algb) = \algb,\\
 &  td(\alga, \algb, \algc) = td(\alga, \algb, \algd) \text{ if } \algc \equiv \algd  \pmod{\theta(\alga, \algb)}.
\end{split} \label{td}
\end{equation}

As we can see, a TD term gives us a uniform way to define the principal congruences. By iterating the TD term a \eqref{td}-like characterization of principal congruences can be extended to the compact congruences (cf. \cite[Theorem 2.6]{Blk_Pgz_3}): if $\ua,\ub$ are lists of elements of an algebra $\AlgA \in \classV$ then 
\begin{equation}
\algc \equiv \algd) \pmod{\theta(\ua, \ub)} \text{ iff }  td(\ua,\ub,\algc) = td(\ua,\ub,\algd), \label{adpc}
\end{equation}
where 
\begin{equation}
td(\ua,\ub,c) = td(\alga_1,\algb_1,td(\alga_2,\algb_2,\dots td(\alga_m,\algb_m,\algc))\dots). \label{tdl}
\end{equation}
Using a simple induction it is not hard to prove that for any $\ua \subseteq \algA$ and any $\algc \in \algA$
\begin{equation} 
td(\ua,\ua,\algc) = \algc. \label{td1}
\end{equation}

\subsubsection{Finitely Presented Algebras}

Assume $\ux \bydef x_1,\dots,x_n$ is a list of variables and 
\[
D \bydef \lbrace t_1(\ux) \approx t_1'(\ux),\dots,t_m(\ux) \approx t_m'(\ux) \rbrace
\]
is a set of equalities. Then the set $D$ defines in a given variety $\classV$ a unique (modulo isomorphism) algebra $\AlgA$ for which the following hold:
\begin{itemize}
\item[(a)] algebra $\AlgA$ is generated by some set of elements $\alga_1,\dots,\alga_n$; 
\item[(b)] $t_1(\ua) = t_1'(\ua),\dots, t_m(\ua) = t_m'(\ua)$;
\item[(c)] if for elements $\algb_1,\dots,\algb_m$ of some algebra $\AlgB \in \classV$ we have 
\[
t_1(\ub) = t_1'(\ub),\dots, t_m(\ub) = t_m'(\ub)
\] 
then the mapping $\phi: \alga_i \mapsto \algb_i, i=1,\dots,m$ can be extended to homomorphism $\overline{\phi}: \AlgA \impl \AlgB$.
\end{itemize}
The set $D$ is called the set of \textit{defining relations} and algebra $\AlgA$ is called \textit{finitely presented in variety} $\classV$. If $D$ is a set of defining relations for $\AlgA$ we will express this fact by $\AlgA_\classV(D)$.

\subsubsection{Characteristic Identities: Definition}
Let $\classV$ be a variety and $\AlgA_\classV(D)$ be an s.i. algebra and $\algb_1,\algb_2 \in \AlgA(D)$ be indistinguishable elements.
Since $\alga_1,\dots,\alga_2 \in \algA$ is a set of generators for some terms $r_1(\ux),r_2(\ux)$ we have
\[
\algb_1 = r_1(\ua) \text{ and } \algb_2 = r_2(\ua).
\]

Assume that a variety $\classV$ has a TD term $td(x,y,z)$ and $\AlgA$ is an s.i. finitely presented algebra with defining relations $D$ and $r_1,r_2$ are terms expressing a pair of indistinguishable elements. Let $\ut(\ux) \bydef t_1(\ux),\dots,t_m(\ux)$ and  $\ut'(\ux) \bydef t_1'(\ux),\dots,t_m'(\ux)$. Then the identity
\begin{equation}
 td(\ut(\ux),\ut'(\ux),r_1(\ux)) \approx td(\ut(\ux),\ut'(\ux),r_2(\ux)) \label{chari}
\end{equation}
will be called a \textit{characteristic identity of algebra} $\AlgA$ and we will denote this identity by $\chi_{_\classV}(\AlgA,D,r_1,r_2)$ (or simply by $\chi_{_\classV}(\AlgA)$ or $\chi(\AlgA)$ if no confusion arises).

\subsubsection{Main Theorem}

The theorem and the corollaries of this section extend the results by V.~Jankov (cf. \cite{Jankov_1963_ded,Jankov_1969} and the Section \ref{Jankovprop}) to varieties with a TD term.

\begin{theorem} \label{hom} (comp. \cite{Jankov_1969})If a variety $\classV$ has a TD term $td(x,y,z)$ and $\AlgA$ is an s.i. finitely presented in $\classV$ algebra then for every $\AlgB \in \classV$ 
\[
\AlgB \not\models \chi_{_\classV}(\AlgA,D,r_1,r_2) \text{ if and only if } \AlgA \in \CSub\CHom\AlgB.
\] 
\end{theorem}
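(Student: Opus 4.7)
The plan is to mirror the structure of Proposition \ref{prophom} from the Heyting setting, with the TD term playing the role that implication did there. The translation is clean because \eqref{adpc} gives exactly the analogue of Proposition \ref{congref}: instead of asking when $\nu(A) \equiv \one \pmod{\theta}$ and $\nu(B) \not\equiv \one \pmod{\theta}$, we ask when $\ut(\ub) \equiv \ut'(\ub) \pmod{\theta}$ while $r_1(\ub) \not\equiv r_2(\ub) \pmod{\theta}$.

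For the easy direction $(\Leftarrow)$, I first establish a self-refutation statement analogous to \eqref{selfrefut}: evaluating $\chi_{_\classV}(\AlgA,D,r_1,r_2)$ at the generators $\ua$ of $\AlgA$, the defining relations give $\ut(\ua) = \ut'(\ua)$, so by \eqref{td1} the two sides of \eqref{chari} collapse to $r_1(\ua)$ and $r_2(\ua)$, which are distinct by hypothesis. Hence $\AlgA \not\models \chi_{_\classV}(\AlgA,D,r_1,r_2)$. Since identities are preserved under $\CHom$ and $\CSub$, if $\AlgA \in \CSub\CHom\AlgB$ then $\AlgB \models \chi_{_\classV}(\AlgA,D,r_1,r_2)$ would force $\AlgA \models \chi_{_\classV}(\AlgA,D,r_1,r_2)$, a contradiction.

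For the nontrivial direction $(\Rightarrow)$, suppose some assignment $\ub$ in $\AlgB$ refutes the identity, so
\[
td(\ut(\ub),\ut'(\ub),r_1(\ub)) \neq td(\ut(\ub),\ut'(\ub),r_2(\ub)).
\]
Form the compact congruence $\theta \bydef \theta(\ut(\ub),\ut'(\ub))$ on $\AlgB$. By \eqref{adpc} the displayed inequality is equivalent to $r_1(\ub) \not\equiv r_2(\ub) \pmod{\theta}$. Passing to $\AlgB/\theta$, the tuple $[\ub]_\theta$ satisfies every defining relation of $D$, since $t_i(\ub) \equiv t_i'(\ub) \pmod{\theta}$ by construction. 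By the universal property (c) of finite presentability, the assignment $\alga_i \mapsto [\ub_i]_\theta$ extends to a homomorphism $\overline{\phi}:\AlgA \impl \AlgB/\theta$. Then $\overline{\phi}(r_1(\ua)) = [r_1(\ub)]_\theta \neq [r_2(\ub)]_\theta = \overline{\phi}(r_2(\ua))$, so $\overline{\phi}$ separates the indistinguishable pair $r_1(\ua), r_2(\ua)$; by Corollary \ref{monolith}, $\overline{\phi}$ is an embedding, and hence $\AlgA \in \CSub\CHom\AlgB$.

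The main technical point to watch is the correct use of the TD term: one has to verify both that \eqref{td1} gives the self-refutation (so that the defining relations genuinely force the characteristic identity to fail in $\AlgA$), and that \eqref{adpc} translates its failure in $\AlgB$ into non-collapse modulo the specific compact congruence $\theta(\ut(\ub),\ut'(\ub))$. Once these two facts are in hand, the passage to $\AlgB/\theta$ is routine — finite presentability supplies the homomorphism and Corollary \ref{monolith} upgrades it to an embedding, exactly as in the Heyting case.
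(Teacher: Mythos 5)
Your proposal is correct and follows essentially the same route as the paper's own proof: the self-refutation via \eqref{td1} for the easy direction, and for the converse the compact congruence $\theta(\ut(\ub),\ut'(\ub))$, the translation through \eqref{adpc}, the homomorphism from the universal property of the presentation, and the upgrade to an embedding via Corollary \ref{monolith}. No gaps.
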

\begin{proof}
First we prove that if $\AlgB \not\models \chi_{_\classV}(\AlgA,D,r_1,r_2)$ then $\AlgA \in \CSub\CHom\AlgB$.

Let $\AlgB \in \classV$ and $\ub \bydef \algb_1,\dots,\algb_m$ be elements of $\AlgB$ refuting $\chi(\AlgA,D,r_1,r_2)$, that is
\begin{equation}
td(\ut(\ub),\ut'(\ub),r_1(\ub)) \neq td(\ut(\ub),\ut'(\ub),r_2(\ub)). \label{refut} 
\end{equation} 
Let $\uc \bydef t_1(\ub),\dots, t_m(\ub)$ and $\uc' \bydef t_1'(\ub),\dots, t_m'(\ub)$. Then from \eqref{refut}, by virtue of \eqref{adpc}, we can conclude that
\begin{equation}
r_1(\ub) \not\equiv r_1(\ub) \pmod{\theta(\uc,\uc')}. \label{refut1}
\end{equation}
Let $\phi: \AlgB \impl \AlgB/\theta(\uc,\uc')$ be a natural homomorphism of $\AlgB$ onto quotient algebra $\AlgB/\theta(\uc,\uc')$. Recall that the congruence $\theta(\uc,\uc')$ is generated by pairs $(t_i(\ub),t_i'(\ub)); i=1,\dots,m$ and, hence, for every $i=1,\dots,m$ we have $t_i(\ub) \equiv t_i'(\ub) \pmod{\theta(\uc,\uc')}$. Therefore for every $i=1,\dots,m$ we have 
\[
\phi(t_i(\ub)) = \phi(t_i'(\ub)
\]
and, since $\phi$ is a homomorphism, 
\[
t_i(\phi(\ub)) = t_i'(\phi(\ub)).
\]
 
Let $\algb_i' = \phi(\algb_i); i=1,\dots,n$. Then in $\AlgB/\theta(\uc,\uc')$
\begin{equation}
t_i(\ub') = t_i'(\ub') \text{ for all } i=1,\dots,m,  \label{defh}
\end{equation}
while from \eqref{refut1}
\begin{equation}
r_1(\ub') \neq r_2(\ub').  \label{monol2}
\end{equation}

Suppose $\alga_1,\dots,\alga_n$ are the generators from the definition of the finitely presented algebra $\AlgA$. And let us consider the mapping
\begin{equation}
\psi: \alga_i \mapsto \algb'_i; i=1,\dots,n. \label{map}
\end{equation}
By \eqref{defh} and the definition of finitely presented algebra, we can extend mapping $\psi$ to a homomorphism $\opsi: \AlgA \impl \AlgB/\theta(\uc,\uc')$. All what is left to prove is that $\opsi$ is an isomorphism, i.e. that $\psi$ is an embedding.

Indeed, from \eqref{monol2}
\[
\opsi(r_1(\ub)) \neq \opsi(r_2(\ub)).
\]
Recall that $r_1(\ub)$ and $r_1(\ub)$ are indistinguishable elements. Thus, we can apply the Corollary \ref{monolith}  and conclude that $\psi$ is an embedding.

Conversely, assume that $\AlgA \in \CSub\CHom\AlgB$. In order to prove $\AlgB \not\models \chi_{_\classV}(\AlgA,D,r_1,r_2)$ it suffices to verify that $\AlgA \not\models \chi_{_\classV}(\AlgA,D,r_1,r_2)$. 

Indeed, by the definition of finitely presented algebra, 
\[
t_i(\ua) = t_i'(\ua) \text{ for all } i=1,\dots,m.
\] Hence, if $\algb_i = t_i(\ua); i=1,\dots,m$ and $\ub \bydef \algb_1,\dots,\algb_m$, then
\begin{equation*}
\chi_{_\classV}(\AlgA,D,r_1,r_2)(\ua)  
\end{equation*} 
is true if and only if
\begin{equation*}
td(\ub,\ub,r_1(\ua)) = td(\ub,\ub,r_2(\ua)).
\end{equation*}
But by \eqref{td1}
\[
td(\ub,\ub,r_1(\ua)) = r_1(\ua) \text{ and } td(\ub,\ub,r_1(\ua)) = r_2(\ua).
\]
Since, by the hypothesis of the theorem, $r_1(\ua)$ and $r_2(\ua)$ are distinct elements, we can conclude that the the valuation $\nu: \ux \mapsto \ua$ refutes the characteristic identity $\chi_{_\classV}(\AlgA,D,r_1,r_2)$.
\end{proof}


\begin{cor} \label{genrefute} Suppose $\AlgA$ is an s.i. algebra finitely presented in a variety $\classV$ and $\chi_{_\classV}(\AlgA,D,r_1,r_2)$ is a characteristic identity. If $\idn(y_1,\dots,y_s)$ is an identity and $\AlgA \not\models \idn$ then there is such a substitution $\sigma$ that $\sigma(\idn) \vdash_\classV \chi_{_\classV}(\AlgA,D,r_1,r_2)$.
\end{cor}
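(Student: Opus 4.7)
The plan is to refute $\idn$ in $\AlgA$, pull the refutation back to the generators of $\AlgA$, and read this off as the substitution $\sigma$. The main theorem \ref{hom} then gives the quasi-identity $\sigma(\idn) \vdash_\classV \chi$ for free, via preservation of identities under $\CHom$ and $\CSub$.

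First, write $\idn$ as $s(y_1,\dots,y_s) \approx s'(y_1,\dots,y_s)$. Since $\AlgA \not\models \idn$, there exist elements $\alga_1',\dots,\alga_s' \in \algA$ with $s(\alga_1',\dots,\alga_s') \neq s'(\alga_1',\dots,\alga_s')$. Next, recall that $\AlgA$ is finitely presented in $\classV$ with generators $\alga_1,\dots,\alga_n$ indexed by the defining variables $\ux = x_1,\dots,x_n$ appearing in $\chi_{_\classV}(\AlgA,D,r_1,r_2)$. Hence for each $j = 1,\dots,s$ one can choose a term $q_j(\ux)$ such that $\alga_j' = q_j(\ua)$. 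Define the substitution $\sigma$ by $\sigma(y_j) \bydef q_j(\ux)$.

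Then $\sigma(\idn)$ is the identity $s(q_1(\ux),\dots,q_s(\ux)) \approx s'(q_1(\ux),\dots,q_s(\ux))$, and the valuation $x_i \mapsto \alga_i$ refutes $\sigma(\idn)$ in $\AlgA$ by construction, so $\AlgA \not\models \sigma(\idn)$.

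Now to establish $\sigma(\idn) \vdash_\classV \chi_{_\classV}(\AlgA,D,r_1,r_2)$, I would argue by contrapositive. Take any $\AlgB \in \classV$ with $\AlgB \not\models \chi_{_\classV}(\AlgA,D,r_1,r_2)$. By Theorem \ref{hom}, $\AlgA \in \CSub\CHom\AlgB$, i.e.\ $\AlgA$ embeds into some quotient of $\AlgB$. Since identities are preserved under both homomorphic images and subalgebras, any identity valid in $\AlgB$ is valid in $\AlgA$; in particular, if $\AlgB \models \sigma(\idn)$ then $\AlgA \models \sigma(\idn)$, contradicting what was shown above. Hence $\AlgB \not\models \sigma(\idn)$, which yields the required quasi-identity.

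There is no serious obstacle here: the whole argument rests on Theorem \ref{hom} and the routine fact that identities persist under $\CSub$ and $\CHom$. The only point requiring care is the choice of $\sigma$; one must translate the refuting assignment $\alga_j'$ in $\AlgA$ into terms $q_j(\ux)$ in the defining variables of $\chi$, which is exactly what finite generation of $\AlgA$ by $\alga_1,\dots,\alga_n$ permits.
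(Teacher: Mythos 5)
Your construction of $\sigma$ is exactly the paper's: refute $\idn$ on a tuple $\algc_1,\dots,\algc_s\in\algA$, express each $\algc_j$ as a term $q_j(\ua)$ in the generators, and set $\sigma(y_j)\bydef q_j(\ux)$, so that $\AlgA\not\models\sigma(\idn)$. The problem is in the second half. The conclusion to be proved is $\sigma(\idn)\vdash_\classV\chi_{_\classV}(\AlgA,D,r_1,r_2)$, which by the paper's definition means the \emph{quasi-identity} $\sigma(\idn)\dimpl\chi$ holds in $\classV$: for every $\AlgB\in\classV$ and every \emph{single} tuple $\ub\in\algB^n$, if $\sigma(\idn)(\ub)$ is true then $\chi(\ub)$ is true. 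What your contrapositive argument delivers is only the algebra-level statement: if some tuple of $\AlgB$ refutes $\chi$, then (because $\AlgA\in\CSub\CHom\AlgB$ and $\AlgA\not\models\sigma(\idn)$) some possibly \emph{different} tuple of $\AlgB$ refutes $\sigma(\idn)$. That is the relation the paper denotes $\sigma(\idn)\vDash_\classV\chi$ (``$\classV$-follows''), which is strictly weaker than $\vdash_\classV$; the paper is explicit that $\vdash$ implies $\vDash$ and not conversely (cf.\ \eqref{equiv}). A two-element Boolean algebra already shows the general pattern fails: it refutes both $x\approx 0$ and $x\approx 1$, yet satisfies neither quasi-identity between them. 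So the sentence ``which yields the required quasi-identity'' is a genuine gap, not a routine step.

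To close it you must work with the \emph{same} tuple throughout, which is what the paper does by reopening the proof of Theorem \ref{hom} rather than quoting only its statement. If $\ub=\algb_1,\dots,\algb_n$ refutes $\chi$ in $\AlgB$, that proof produces the congruence $\theta(\uc,\uc')$ and an embedding $\opsi:\AlgA\impl\AlgB/\theta(\uc,\uc')$ sending the generator $\alga_i$ precisely to $\algb_i'=[\algb_i]_\theta$. Since $\sigma(\idn)(\alga_1,\dots,\alga_n)$ fails in $\AlgA$ and $\opsi$ is injective, $\sigma(\idn)(\algb_1',\dots,\algb_n')$ fails in the quotient; and since the $\algb_i'$ are the images of the $\algb_i$ under the natural homomorphism, $\sigma(\idn)(\algb_1,\dots,\algb_n)$ must already fail in $\AlgB$. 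This gives the pointwise implication $\neg\chi(\ub)\dimpl\neg\sigma(\idn)(\ub)$ for the one tuple $\ub$, i.e.\ the quasi-identity. Your choice of $\sigma$ is correct and your argument is salvageable, but only after this refinement.
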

\begin{proof} Since $\AlgA \not\models \idn$, there are elements $\algc_1,\dots,\algc_s$ such that $\idn(\algc_1,\dots,\algc_s)$ is not true. Assume $\alga_1,\dots,\alga_n$ are the generators of algebra $\AlgA$ from the definition of finitely presented algebra. Thus, we can express every element $\algb_i$ via generators, that is $\algc_i = t_i(\alga_1,\dots,\alga_n); i=1,\dots,s$ for some terms $t_i$. Let us consider the substitution
\[
\sigma: y_i \mapsto t_i(x_1,\dots,x_n); i=1,\dots,s.
\]  
It is clear that $\sigma(\idn)(\alga_1,\dots,\alga_n)$ is not true.

Next, we want to demonstrate that if for any elements $\algb_1,\dots,\algb_n$ of any algebra $\AlgB \in \classV$ we have $\chi_{_\classV}(\AlgA,D,r_1,r_2)(\algb_1,\dots,\algb_n)$ is not true, then $\sigma(\idn)(\algb_1,\dots,\algb_n)$ is not true too.

  Indeed, if $\chi_{_\classV}(\AlgA,D,r_1,r_2)$ is invalid in an algebra $\AlgB \in \classV$ then, by virtue of Theorem \ref{hom}, we have that the mapping $\psi$ defined by \eqref{map} can be extended to an isomorphism $\opsi$. Hence, $\sigma(\idn)(\algb_1',\dots,\algb_n')$ is not true, for $\sigma(\idn)(\alga_1,\dots,\alga_n)$ is not true. Recall from the proof of the Theorem \ref{hom} that elements $\algb_i'$ are homomorphic images of elements $\algb_i$ (on which the characteristic identity got refuted). Thus, $\sigma(\idn)(\algb_1,\dots,\algb_n)$ cannot be true.      	
\end{proof}

\begin{cor} \label{meetpr} If $\AlgA$ is an s.i. algebra finitely presented in a variety $\classV$, then all characteristic identities of $\AlgA$ are $\classV$-equipotent.
\end{cor}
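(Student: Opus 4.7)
The plan is to read off the corollary directly from Theorem \ref{hom}. Recall that two identities $\idn_1,\idn_2$ are $\classV$-equipotent precisely when they are validated by the same algebras of $\classV$, equivalently when they are refuted by the same algebras of $\classV$. So it suffices to show that for any two choices of defining relations and indistinguishable pairs for $\AlgA$, the corresponding characteristic identities are refuted in exactly the same members of $\classV$.

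The crucial observation is that Theorem \ref{hom} gives an intrinsic, choice-free description of refutation: for every $\AlgB\in\classV$,
\[
\AlgB\not\models \chi_{_\classV}(\AlgA,D,r_1,r_2)\ \text{ iff }\ \AlgA\in\CSub\CHom\AlgB.
\]
The right-hand side involves only the abstract algebras $\AlgA$ and $\AlgB$; it mentions neither the presentation $D$ nor the terms $r_1,r_2$ witnessing a pair of indistinguishable elements. Hence, if $D$, $D'$ are two finite presentations of $\AlgA$ in $\classV$ and $(r_1,r_2)$, $(r_1',r_2')$ are two pairs of terms expressing indistinguishable elements of $\AlgA$, then for every $\AlgB\in\classV$,
\[
\AlgB\not\models \chi_{_\classV}(\AlgA,D,r_1,r_2)\ \iff\ \AlgA\in\CSub\CHom\AlgB\ \iff\ \AlgB\not\models \chi_{_\classV}(\AlgA,D',r_1',r_2').
\]
Taking the complement within $\classV$, the two characteristic identities are valid in exactly the same algebras of $\classV$, so $\chi_{_\classV}(\AlgA,D,r_1,r_2)\approx_\classV \chi_{_\classV}(\AlgA,D',r_1',r_2')$.

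In short, there is essentially no work to do beyond invoking Theorem \ref{hom}; the only thing one has to be a little careful about is the logical equivalence ``same refuting algebras in $\classV$'' $\Leftrightarrow$ ``$\classV$-equipotent'', which is immediate from the definition of $\approx_\classV$. The main obstacle, if any, is purely notational: one must resist the temptation to try to transform one characteristic identity into the other by syntactic manipulation involving $td$ and the defining terms. The semantic route via Theorem \ref{hom} bypasses this entirely, and the same argument simultaneously shows that the \emph{class of varieties axiomatised} by $\chi_{_\classV}(\AlgA,D,r_1,r_2)$ over $\classV$ is also independent of all choices.
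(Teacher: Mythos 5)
Your proof is correct and is exactly the intended argument: the paper states this corollary without proof as an immediate consequence of Theorem \ref{hom}, whose right-hand side ($\AlgA \in \CSub\CHom\AlgB$) is independent of the presentation $D$ and the pair $(r_1,r_2)$, so all characteristic identities of $\AlgA$ are refuted by (hence valid in) the same members of $\classV$ and are therefore $\classV$-equipotent.
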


Given a variety $\classV$, an identity $\idn$ is called $\land$\textit{-prime in} $\classV$ if for any set of identities $\classI$ if $\classI \models_\classV \idn$ then $\idn' \models_\classV \idn$ for some $\idn' \in \classI$. Often, if no confusion arises, we will omit the reference to a variety.

\begin{cor} \label{meet} Any characteristic identity is $\land$-prime.
\end{cor}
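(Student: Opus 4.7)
The plan is to reduce $\land$-primeness of $\chi_{_\classV}(\AlgA,D,r_1,r_2)$ to the self-refutation property of characteristic identities together with Corollary \ref{genrefute}. Suppose $\classI$ is a set of identities with $\classI \models_\classV \chi_{_\classV}(\AlgA,D,r_1,r_2)$. I want to produce a single $\idn' \in \classI$ with $\idn' \models_\classV \chi_{_\classV}(\AlgA,D,r_1,r_2)$.

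First I would observe that $\AlgA$ itself refutes its own characteristic identity. This is exactly the computation carried out at the end of the proof of the Theorem \ref{hom}: evaluating under $\nu:\ux \mapsto \ua$ and applying \eqref{td1} collapses both sides of \eqref{chari} to $r_1(\ua)$ and $r_2(\ua)$ respectively, which differ because $r_1(\ua),r_2(\ua)$ are indistinguishable (hence distinct) elements of $\AlgA$. Consequently $\AlgA \not\models \chi_{_\classV}(\AlgA,D,r_1,r_2)$, and since $\classI \models_\classV \chi_{_\classV}(\AlgA,D,r_1,r_2)$ and $\AlgA \in \classV$, there must be some $\idn' \in \classI$ with $\AlgA \not\models \idn'$.

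The second step is to promote this single refuting identity to a $\classV$-entailment of the characteristic identity. By the Corollary \ref{genrefute} applied to $\idn'$, there is a substitution $\sigma$ such that $\sigma(\idn') \vdash_\classV \chi_{_\classV}(\AlgA,D,r_1,r_2)$, i.e.\ every $\AlgB \in \classV$ satisfying $\sigma(\idn')$ also satisfies $\chi_{_\classV}(\AlgA,D,r_1,r_2)$. Since identities are preserved under substitution (if $\AlgB \models \idn'$ then $\AlgB \models \sigma(\idn')$ for every $\sigma$), this yields $\idn' \models_\classV \chi_{_\classV}(\AlgA,D,r_1,r_2)$, which is precisely the conclusion of $\land$-primeness.

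I do not anticipate a serious obstacle here: all the heavy lifting has already been done in the Theorem \ref{hom} (self-refutation) and the Corollary \ref{genrefute} (converting semantic invalidity in $\AlgA$ into a derivation of the characteristic identity modulo a substitution). The only subtle point is to remember that the quasi-identity $\sigma(\idn') \Rightarrow \chi_{_\classV}(\AlgA,D,r_1,r_2)$ supplied by Corollary \ref{genrefute} is about $\sigma(\idn')$, not $\idn'$ itself, and that closure of validity under substitution bridges this gap automatically.
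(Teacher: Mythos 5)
Your proposal is correct and follows essentially the same route as the paper: use the self-refutation $\AlgA \not\models \chi_{_\classV}(\AlgA,D,r_1,r_2)$ to extract a single $\idn' \in \classI$ refuted in $\AlgA$, then invoke Corollary \ref{genrefute}. You are in fact slightly more careful than the paper's one-line conclusion, since you explicitly bridge the gap from $\sigma(\idn') \vdash_\classV \chi$ to $\idn' \vDash_\classV \chi$ via closure of identity validity under substitution, a step the paper leaves implicit.
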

\begin{proof} Let $\chi$ be a characteristic identity of an algebra $\AlgA$ and $\classI$ be a class of identities such that $\classI \vDash \chi$. Since $\AlgA \not\models \chi$ there is such an identity $\idn \in \classI$ that $\AlgA \not\models \idn$. By virtue of the Corollary \ref{genrefute}, $\idn\vDash \chi$. 
\end{proof}

\subsection{Examples} \label{secexamples}

Let $\Heyt$ be a variety of Heyting algebras in the signature $\land,\lor,\impl,\neg,\one$. Variety $\Heyt$ has two TD terms \cite{Blk_Pgz_3}:
\[
\begin{split}
& td_\impl(x,y,z) \bydef ((x \impl y) \land (y \impl x)) \impl z \text{ and}\\
& td_\land(x,y,z) \bydef ((x \impl y) \land (y \impl x)) \land z
\end{split}
\] 

For Heyting algebras we always can assume that finitely presented algebra is defined by a single identity of the form $t(\ux) \approx \one$. Also, every s.i. Heyting algebra has an opremum which always is a member of the monolith. Therefore we always can take a term $r(\ux)$ that expresses the opremum and $\one$ as $r_1$ and $r_2$. Thus, using $td_\impl$ we will get the following characteristic term
\[
((t(\ux) \impl \one) \land (\one \impl t(\ux))) \impl r(\ux) \approx ((t(\ux) \impl \one) \land (\one \impl t(\ux))) \impl \one
\]
which clearly is equivalent to
\begin{equation}
	t(\ux) \impl r(\ux) \approx \one. \label{tdimpl}
\end{equation}
If we take as a defining relations the diagram $\delta^+(\AlgA)$ of an algebra $\AlgA$, we will get the identity corresponding to the Jankov formula. 

Note that if we use a different TD term, for instance, $td_\land$, we will get
\[
((t(\ux) \impl \one) \land (\one \impl t(\ux))) \land r(\ux) \approx ((t(\ux) \impl \one) \land (\one \impl t(\ux))) \land \one,
\]
which is equivalent to
\begin{equation}
	t(\ux) \land r(\ux) \approx t(\ux).  \label{tdland}
\end{equation}
It is easy to see that the identities \eqref{tdimpl} and \eqref{tdland} are $\Heyt$-equipotent. 

\begin{remark} In \cite{Kowalski_Ono_2000,Kowalski_Ono_Remarks_2000} for constructing a characteristic term the greatest and the smallest elements of a monolith are used. In fact, one can use any two distinct elements of the monolith.

\end{remark}
\subsection{Applications to Different Classes of Logics}

In this section we will discuss how the characteristic formulas can be used in different classes of logics.

\subsubsection{Extensions by Compatible Operations}

Suppose that we have a variety $\classV$ of algebras in the signature $\Con$ and a TD term $td$. Let us recall from \cite{Blk_Pgz_4} that an operation $f$ on algebras from $\classV$ is called \textit{compatible} if extending $\Con$ by adding $f$ to it does not change the congruences. Observe, that if $\AlgA$ is an algebra in the extended signature and $\AlgA^-$ is its $\Con$-reduct, then $\AlgA$ is s.i. if and only if $\AlgA^-$ is an s.i. algebra. Note also that if we endowed the signature by a compatible operation the TD term remains the same. And, hence, we can easily extend the definition of Jankov formula (by adding to the diagram new members representing the new operation) to the new class of algebras. It is also not hard to see that we can extend the notion of characteristic identity. For instance, for Brouwerian semilattices the TD term is
\[
td = (p \impl q) \land (q \impl p) \impl r 
\]  
and Jankov formula of a finite s.i. Brouwerian semilattice $\AlgA$ is
\[
\delta^+(\AlgA) \impl p_\omega.
\] 
We can add to the signature a new symbol $\lor$ and convert Brouwerian semilattices into Brouwerian lattices. $\lor$ is compatible with congruences of Brouverian semilattices. Thus the TD term remains the same and in the Jankov formula the antecedent will contain more conjuncts. Similarly we can add a new constant $0$ and convert Brouwerian lattices into Johansson's algebras (cf. \cite{Odintsov_Structure_2005,Odintsov_Lattice_2006}).   

\subsubsection{Examples of Varieties with a TD Term}

In the following table we give some examples of varieties and their td-terms. For the detail we refer the reader to \cite{Blk_Pgz_3,Blok_Raftery_Varieties_1997,Agliano_Ternary_1998,Berman_Blok_Free_2004,Odintsov_Structure_2005,Spinks_Veroff_Characterisation_2007}. 

We start with noting that all discriminator variueties have a TD term. 

Consider the following ternary terms:

\begin{itemize}
\item[(a)] $td \bydef (p \impl q) \impl ((q \impl p) \impl r)$ 
\item[(b)] $td \bydef (p \impl q) \land (q \impl p) \land r$, or $p \eqv q \land r$
\item[(c)] $td \bydef (p \impl q) \impln{n-1} ((q \impl p) \impln{n-1} r)$
\item[(d)] $td \bydef (p \eqv  q) \land \Box(p \eqv  q) \land \dots \land \Box^{n-1}(p \eqv  q) \impl r$
\item[(e)] $td \bydef (p \eqv  q) \land \Box(p \eqv  q) \land \dots \land \Box^{n-1}(p \eqv  q) \land r$
\item[(f)] $td \bydef \Box(p \eqv  q)  \impl r$
\item[(g)] $td \bydef \Box(p \eqv  q)  \land r$
\item[(h)] $td \bydef (p \eqv  q)^n \cdot r$
\item[(i)] $td \bydef (p \eqv  q) \impln{n} r$
\end{itemize}
where $p \eqv q$ as an abbreviation for $(p \impl q) \land (q \impl p)$. 

\pagebreak 

\begin{table}[ht]
\centering
\begin{tabular}{|l|l|}
\hline
Variety & td-term  \\ 
\hline
Hilbert algebras & (a) \\
Brouwerian semilattices & (a) \\
Brouwerian semilattices & (b)  \\
Heyting algebras & (a) \\
Heyting algebras & (b) \\
KM algebras & (a) \\
KM algebras & (b) \\
Johansson's algebras & (a) \\
Johansson's algebras & (b) \\
n-potent hoops & (c) \\
n-transitive modal algebras & (d) \\
n-transitive modal algebras  & (e) \\
Interior algebras  & (f) \\
Interior algebras  & (g) \\
BCI monoids &  (h) \\
BCI monoids & (i) \\
\hline
\end{tabular}\\

\caption{Examples of TD terms}
\label{tab:myfirsttable}
\end{table}

\section{Characteristic Identities as Means for Axiomatization} \label{axiom}

One of the important properties of logics or varieties is whether a logic, or a variety, admits an independent axiomatization. The characteristic formulas give a very convenient tool for constructing an independent axiomatization of a given logic (by using the characteristic formulas of members of an anti-chain of finite s.i. algebras). In algebraic terms, for a given variety $\classV$ one can try to construct an independent set of identities defining $\classV$, that is such a set $\mathscr{I}$ that the identities $\mathscr{I}$ define $\classV$ and no identity from $\mathscr{I}$ is a consequence from the rest of identities from $\mathscr{I}$ (see, for instance, \cite{Wojtylak_Indep_1989,Budkin_Independent_1994,Sapir_Finite_1980,Gorbunov_Covering_1977}).

\subsection{Logics Axiomatizable by Characteristic Identities}

We recall from \cite{MaltsevBook} that the \textit{rank} of an identity is a number of distinct variables occurring in it. The \textit{axiomatic rank} of a variety $\classV$ is the least natural number $r$ (that we denote by $r_a(\classV)$) such that $\classV$ can be defined (axiomatized) by identities of the rank not exceeding $r$. In this section we show that in locally finite varieties any subvariety $\classV$ can be defined by independent set of characteristic identities of rank not exceeding $r_a(\classV)$. In this section we will study when a logic or a variety admits an axiomatization by characteristic identities, and when it does not admit such an axiomatization.

\subsubsection{Edge Algebras}

In this section we introduce the notion of \textit{edge algebra}\footnote{In \cite[Definition 3.9]{Tomaszewski_PhD} E.~Tomaszewski introduced a notion of critical algebra which is very similar to the notion of edge algebra. We prefer the term ``edge algebra'' since the term ``critical algebra'' is already used in the different meanings.} which is very similar to the notion of an algebra having a pre-true formula. 

\begin{definition}
Let $\classV$ be a variety and $\AlgA$ be an algebra. We will say that an algebra $\AlgA$ is an \textit{edge algebra of} $\classV$ (or $\classV$-\textit{edge}) if $\AlgA \notin \classV$ while $\oV(\AlgA) \subseteq \classV$.
\end{definition}

From the Proposition \ref{pretr}(a) it follows that any $\classV$-edge algebra $\AlgA$ has pre-true identities. In fact, any identity that separates $\AlgA$ from $\classV$ is pre-true in $\AlgA$. From the Proposition \ref{pretr} we also know that any $\classV$-edge algebra is s.i. and finitely generated (cf. \cite[Proposition 3.8]{Tomaszewski_PhD}).

Recall from \cite{MaltsevBook} that the \textit{basis rank} of an algebra $\AlgA$ is a minimal cardinality of a set of the elements generating $\AlgA$. The basis rank of an algebra $\AlgA$ we denote by $r_b(\AlgA)$ and any set of generators of cardinality $r(\AlgA)$ we will call a \textit{basis} (of $\AlgA$). 

The importance of $\classV$-edge algebras can be seen from the following simple proposition.

\begin{prop} \label{edgeax} Let $\AlgA$ is a $\classV$-Edge algebra. Then
\[
r_b(\AlgA) \leq r_a(\classV).
\]
\end{prop}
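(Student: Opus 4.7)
The plan is to exploit the defining feature of an edge algebra, namely that every proper subalgebra of $\AlgA$ lies in $\classV$, and combine it with the rank bound on an axiomatisation of $\classV$ to force $\AlgA$ itself to be generated by a small set of elements.

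First I would fix an axiomatisation $\Sigma$ of $\classV$ in which every identity has rank at most $r \bydef r_a(\classV)$; such a $\Sigma$ exists by the very definition of axiomatic rank. Since $\AlgA \notin \classV$, some identity
\[
\idn(x_1,\dots,x_k) \in \Sigma,\quad k \leq r,
\]
must fail in $\AlgA$. Hence there is a valuation $\nu\colon \{x_1,\dots,x_k\} \impl \algA$, say $\nu(x_i) = \alga_i$, witnessing $\AlgA \not\models \idn$.

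Next I would form the subalgebra $\AlgB \bydef \langle \alga_1,\dots,\alga_k \rangle \leq \AlgA$. Because the truth of $\idn$ under $\nu$ depends only on the values $\alga_1,\dots,\alga_k$ and on the operations of the signature (all of which stay inside $\AlgB$), the same valuation refutes $\idn$ in $\AlgB$. Consequently $\AlgB \not\models \idn$ and therefore $\AlgB \notin \classV$.

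Now I would invoke the edge property. Since $\AlgA$ is $\classV$-edge, every proper subalgebra of $\AlgA$ belongs to $\classV$ (this is the substance of $\oV(\AlgA) \subseteq \classV$, already used to obtain the pre-true identity in the preceding paragraph via Proposition \ref{pretr}(a)). As $\AlgB \notin \classV$, the subalgebra $\AlgB$ cannot be proper, so $\AlgB = \AlgA$. This means the $k$ elements $\alga_1,\dots,\alga_k$ generate $\AlgA$, which yields
\[
r_b(\AlgA) \leq k \leq r = r_a(\classV),
\]
as required. I do not expect any real obstacle: the argument is essentially a one-line reduction, and the only subtlety is matching the meaning of $\oV(\AlgA) \subseteq \classV$ with the statement that all proper subalgebras of $\AlgA$ are in $\classV$, which is exactly what the edge-algebra definition supplies and what allows the pre-true identity to exist.
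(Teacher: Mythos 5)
Your proof is correct and follows essentially the same route as the paper's: both refute a small-rank axiom of $\classV$ on a subalgebra generated by the witnessing elements and then invoke the edge property (the paper argues by contradiction from $r_a(\classV) < r_b(\AlgA)$, whereas you run the argument directly, which even lets you bypass the paper's preliminary appeal to finite generation). No gaps.
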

\begin{proof} Let $\AlgA$ be a $\classV$-edge algebra. Hence, it is finitely generated and it has a finite basis rank. Let $r \bydef r_b(\AlgA)$. For contradiction: assume that $r_a(\classV) < r_b(\AlgA) = r$. Then there is a collection of  identities of the rank less than $r$ that defines the variety $\classV$. Recall that $\AlgA \notin \classV$. Therefore there is an identity $\idn(x_1,\dots,x_m)$ where $m < r$ that is valid in $\classV$ but is not valid in $\AlgA$. Assume $\alga_1,\dots,\alga_m \in \AlgA$ and $\idn(\alga_1,\dots,\alga_m)$ is not true. Let $\AlgA'$ be a subalgebra of $\AlgA$ generated by the elements $\alga_1,\dots,\alga_m$. Since $m < r_b(\AlgA)$, by virtue of the definition of basis rank, $\AlgA'$ is a proper subalgebra of $\AlgA$. And, by the definition of edge algebra, $\AlgA' \in \classV$. Hence, $\idn$ is invalid in $\AlgA'$ and this contradicts that $\idn$ is valid in $\classV$.   
\end{proof}

\begin{example} Let $\AlgA$ be a Heyting algebra of $n$-th slice, that is $\AlgA$ contains a subalgebra isomorphic to $n$-element chain (i.e. linearly ordered) algebra $\AlgC_n$, while $\AlgC_{n+1}$ is not embeddable in $\AlgA$. Since all the subalgebras and homomorphic images of a chain algebra are chain algebras, we can conclude that $\AlgC_{n+1}$ is $\eqc(\AlgA)$-edge algebra. Thus $r_b(\AlgC_{n+1}) \leq r_a(\eqc(\AlgA))$. Observe, that $r_b(\AlgC_n) = 1$ when $n=2$ and $r_b(\AlgC_n) = n-2$, when $n > 2$. Hence, $r_a(\eqc(\AlgA)) \geq n-2$ (cf. \cite[Theorem 2.2]{Bellissima_Finite_1988}).   
\end{example}

\begin{cor} If $\classV$ is a variety and for any natural number $m$ there is a $\classV$-edge algebra of basis rank exceeding $m$, then $\classV$ has an infinite axiomatic rank and, hence, cannot be finitely axiomatized.
\end{cor}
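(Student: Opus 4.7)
The plan is to derive a direct contradiction from Proposition \ref{edgeax}, which does all the real work. Suppose, toward a contradiction, that $r_a(\classV)$ is finite, say $r_a(\classV) = r$ for some natural number $r$. Applying the hypothesis with $m \bydef r$, I would obtain a $\classV$-edge algebra $\AlgA$ whose basis rank satisfies $r_b(\AlgA) > r$. But Proposition \ref{edgeax} gives
\[
r_b(\AlgA) \leq r_a(\classV) = r,
\]
a contradiction. Hence $r_a(\classV)$ must be infinite.

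For the ``hence'' clause, I would argue by contrapositive: if $\classV$ admitted a finite axiomatization by identities $\idn_1,\dots,\idn_k$, then $r_a(\classV) \leq \max_{1 \leq i \leq k} \mathrm{rank}(\idn_i) < \infty$, since each identity uses only finitely many variables and we are taking a maximum over finitely many of them. This contradicts the infinitude of $r_a(\classV)$ just established, so $\classV$ cannot be finitely axiomatized.

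The key point is that the corollary is essentially a packaging of Proposition \ref{edgeax}: that proposition already transfers basis rank of edge algebras into a lower bound on axiomatic rank, and the hypothesis supplies edge algebras of arbitrarily large basis rank, so the lower bound becomes unbounded. There is no real obstacle here; the only minor bookkeeping is making explicit that a finite axiomatization has bounded rank (because only finitely many variables occur across finitely many identities), which is what converts the ``infinite axiomatic rank'' statement into ``not finitely axiomatizable''.
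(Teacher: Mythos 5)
Your proof is correct and is exactly the intended argument: the paper states this corollary without a proof, and the only natural route is the one you take, combining Proposition \ref{edgeax} with the observation that a finite set of identities involves only finitely many variables and hence bounds the axiomatic rank. Nothing is missing.
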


For example (cf. \cite{Maksimova_Skvortsov_Shehtman}), the variety of Heyting algebras corresponding to Medvedev's Logic has the edge algebras of basis rank exceeding any given natural number and, thus, this variety is not finitely axiomatizable.

\subsubsection{Optimal Axiomatization}

Every variety can be defined by a set of identities and every defining set we will call a set of axiom. But not every variety admits an \textit{independent} set of axioms, that is, the set of axiom neither member of which follows from the rest of identities. For instance, even not every variety of Heyting algebras has an independent axiomatization \cite{Chagrov_Zakh_1995_indep}. Also, as we saw, not every variety has a finite axiomatic rank. But if a variety admits a finite axiomatization we can try to find the optimal one. In the locally finite varieties the optimal axiomatization (i.e. containing the least possible number of variables) can be constructed of characteristic formulas. In this section we study the optimal axiomatizations of subvarities of locally finite varieties that have a TD term.

First, let us consider the axiomatization of subvarieties of a given variety $\classV_0$. All the varieties under considerations are the subvarieties of $\classV_0$ and axiomtization is understood as axiomatization over $\classV_0$.

\begin{definition} Let $\classV$ be a variety axiomatized by a set of axioms $\Ax$. Then we will call the set $\Ax$ \textit{optimal} if 
\begin{itemize}
\item[(a)] $\Ax$ is independent;
\item[(b)] Every identity in $\Ax$ is $\land$-prime;
\item[(c)] Axioms of $\Ax$ contain no more than $r_a(\classV)$ distinct variables.
\end{itemize}
\end{definition}

Let us observe that independent axiomatization by $\land$-prime formulas is unique in the following sense.

\begin{prop}
Let $\Ax_1$ and $\Ax_2$ be two independent axiomatizations of a given variety $\classV$ consisting of $\land$-prime identities. Then $|\Ax_n| =|\Ax_m|$ and there is 1-1-correspondence $\phi$ between $\Ax_1$ and $\Ax_2$ such that $\idn$ and $\phi(\idn)$ are $\classV$-equipotent for all $\idn \in \Ax_1$.
\end{prop}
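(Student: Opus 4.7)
The plan is to build a bijection $\phi : \Ax_1 \to \Ax_2$ which sends each axiom to a $\classV_0$-equipotent partner in the other set, by combining $\land$-primeness (to produce candidate partners) with independence (to pin them down uniquely). The structure is a back-and-forth argument.

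First I would fix an arbitrary $\idn \in \Ax_1$. Since $\Ax_1$ and $\Ax_2$ axiomatize the same subvariety $\classV$ of the ambient $\classV_0$, every algebra in $\classV_0$ satisfying $\Ax_2$ satisfies $\idn$, so $\Ax_2 \vDash_{\classV_0} \idn$. Applying the $\land$-primeness of $\idn$ to the set $\Ax_2$ yields some $\idn' \in \Ax_2$ with $\idn' \vDash_{\classV_0} \idn$. Running the symmetric argument on $\idn'$ (which is a $\classV_0$-consequence of $\Ax_1$ and is itself $\land$-prime) produces some $\idn'' \in \Ax_1$ with $\idn'' \vDash_{\classV_0} \idn' \vDash_{\classV_0} \idn$. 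If $\idn'' \neq \idn$, this would display $\idn$ as a $\classV_0$-consequence of a single member of $\Ax_1 \setminus \{\idn\}$, contradicting the independence of $\Ax_1$. Hence $\idn'' = \idn$, and we obtain $\idn \vDash_{\classV_0} \idn' \vDash_{\classV_0} \idn$, that is, $\idn$ and $\idn'$ are $\classV_0$-equipotent.

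Next I would set $\phi(\idn) := \idn'$ and verify that $\phi$ is well-defined, injective, and surjective. Well-definedness uses the independence of $\Ax_2$: any two candidate partners of $\idn$ in $\Ax_2$ would themselves be $\classV_0$-equipotent, and independence of $\Ax_2$ forbids one member from being a $\classV_0$-consequence of another unless they coincide. Injectivity is symmetric and uses independence of $\Ax_1$: if $\phi(\idn_1) = \phi(\idn_2)$, then $\idn_1$ and $\idn_2$ are both $\classV_0$-equipotent to their common image, hence to one another, forcing $\idn_1 = \idn_2$. For surjectivity I would run the same construction in the reverse direction to produce $\psi : \Ax_2 \to \Ax_1$; then for each $\idn' \in \Ax_2$, the elements $\phi(\psi(\idn'))$ and $\idn'$ both lie in $\Ax_2$ and are $\classV_0$-equipotent, so the well-definedness already established forces $\phi(\psi(\idn')) = \idn'$. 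The cardinality claim $|\Ax_1| = |\Ax_2|$ then drops out as an immediate corollary.

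I do not anticipate a substantive obstacle here; the argument is essentially a standard back-and-forth of the kind one uses to compare irredundant bases. The only delicate point is the symmetric bookkeeping: $\land$-primeness of identities in $\Ax_1$ must be paired with independence of $\Ax_2$ to force uniqueness of partners, and vice versa. Once this pairing is organised, each of the three verifications for $\phi$ reduces to one alternating application of these two hypotheses.
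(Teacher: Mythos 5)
Your proposal is correct and follows essentially the same route as the paper's proof: use $\land$-primeness to extract a single partner in the other axiom set, then bounce back and invoke independence to force the returned element to be the one you started with, yielding equipotent pairs and a bijection. You are somewhat more explicit than the paper about well-definedness and surjectivity of $\phi$, but the underlying argument is identical.
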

\begin{proof}
Let $\idn \in \Ax_1$. Then, since $\Ax_2$ is a set of axiom of $\classV$, we have 
\[\Ax_2 \models_\classV \idn.\]
Recall that $\idn$ is $\land$-prime. Hence, there is such an identity $\idt \in \Ax_2$ that 
\[
\idt \models_\classV \idn.
\] 
On the other hand, $\Ax_1 \models_\classV \idt$ and, hence, for some $\idn' \in \Ax_1$ we have 
\[
\idn' \models_\classV \idt.
\]
Since $\Ax_1$ is an independent set we can conclude that
\[
\idn = \idn'.
\]
Thus, we have constructed a 1-1-correspondence $\phi: \idn \mapsto \idn'$ between $\Ax_1$ and a subset of $\Ax_2$ and $\idn \approx_\classV \phi(\idn)$. If we take into account that $\Ax_1$ defines $\classV$ and $\Ax_2$ is an independent set, we can conclude that $\phi$ maps $\Ax_1$ onto $\Ax_2$. 
\end{proof}

\subsubsection{Optimal Axiomatization in Locally Finite Varieties}

In this section we will show that given a locally finite variety $\classV_0$ with a TD term any subvariety $\classV \subseteq \classV_0$ admits an optimal axiomatization over $\classV_0$ and we will use the characteristic identities in order to construct such an axiomatization (comp. \cite[Remark 2.8]{Bezhanishvili_N_Characterization_2004}). Till the end of this section we assume that a variety $\classV_0$ is given and we will construct a set of axiom $\Ax$ that defines $\classV$ over $\classV_0$, that is for any $\AlgA \in \classV_0$, $\AlgA \in \classV$ if and only if $\AlgA \models \idn$ for all $\idn \in \Ax$. 

Let us recall from \cite{MaltsevBook} that a variety $\classV$ is called \textit{locally finite} if every finitely generated algebra from $\classV$ is finite. Since every $n$-generated algebra in $\classV$ is a homomorphic image of a free in $\classV$ algebra $\AlgF_\classV(n)$ of rank $n$, the variety $\classV$ is locally finite if and only if all its free algebras of finite rank are finite. 

Since characteristic formulas enjoy the same properties as Jankov formulas we can use the regular technique of constructing axiomatization consisting of Jankov formulas (see, for instance, \cite{Jankov_1969,Citkin1986,Skvortsov_Remark_1999,Tomaszewski_Algorithm_2002,Bezhanishvili_N_PhD}). 

By $FSI(\classV)$ we denote a set of all finite s.i. algebras from $\classV$. Let us introduce on $FSI(\classV_0)$ a partial order:
\begin{equation}
\AlgA \leq \AlgB \text{ if and only if } \AlgA \in \eqc(\AlgB). \label{po}
\end{equation}
The reflexivity and transitivity are trivial. Let us show that $\AlgA \leq \AlgB$ and $\AlgB \leq \AlgA$ yields $\AlgA \cong \AlgB$.
Indeed, if $\AlgA \in \eqc(\AlgB)$ we have $\AlgB \not\models \chi(\AlgA)$ and, by virtue of the Theorem \ref{hom}, $\AlgA \in \CSub\CHom\AlgB$. Hence, $|\algA| \leq |\algB|$. Similarly, $|\algB| \leq |\algA|$. Since algebras $\AlgA$ and $\AlgB$ are finite, we can conclude that $|\algA| = |\algB|$ and these algebras are isomorphic.

\begin{theorem} \label{optax} Let $\classV_0$ be a locally finite variety with a TD term. Then every subvariety $\classV \subsetneq \classV_0$ admits an optimal axiomatization constructed of characteristic identities.
\end{theorem}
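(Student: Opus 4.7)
Let $\mathcal{E}$ be a system of representatives for the isomorphism classes of $\classV$-edge algebras inside $\classV_0$. For each $\AlgA \in \mathcal{E}$, fix a basis of minimal cardinality $r_b(\AlgA)$ together with a presentation $D_\AlgA$ and a pair of indistinguishable terms; the resulting characteristic identity $\chi_{_\classV}(\AlgA)$ uses exactly $r_b(\AlgA) \le r_a(\classV)$ variables, the bound being Proposition \ref{edgeax}. Set $\Ax = \{\chi_{_\classV}(\AlgA) : \AlgA \in \mathcal{E}\}$ as the proposed optimal axiomatization of $\classV$ over $\classV_0$.

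\textbf{Axiomatization.} Soundness is free: for $\AlgB \in \classV$ no $\AlgA \in \mathcal{E}$ can belong to $\CSub\CHom\AlgB \subseteq \classV$, so Theorem \ref{hom} forces $\AlgB \models \chi_{_\classV}(\AlgA)$. For completeness, fix $\AlgB \in \classV_0 \setminus \classV$ and a refuting valuation for some identity of $\classV$; by local finiteness the subalgebra $\AlgB'$ generated by the witnesses is finite. Because a TD term makes $\classV_0$ congruence distributive (indeed it gives EDPC), $\AlgB'$ is a subdirect product of finite s.i.\ quotients, and at least one such factor $\AlgC$ lies outside $\classV$. EDPC further entails the congruence extension property, which lets me extend the congruence producing $\AlgC$ from $\AlgB'$ to one on $\AlgB$, placing $\AlgC$ in $\CSub\CHom\AlgB$. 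Now pick a minimum-size $\AlgA$ among finite s.i.\ members of $\CSub\CHom\AlgB \setminus \classV$: any proper subalgebra or proper quotient of $\AlgA$ is strictly smaller, still sits in $\CSub\CHom\AlgB$ (again by CEP for quotients), and must therefore lie in $\classV$ by minimality. Hence $\AlgA$ is a $\classV$-edge algebra, and Theorem \ref{hom} gives $\AlgB \not\models \chi_{_\classV}(\AlgA)$.

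\textbf{Independence and primality.} Let $\AlgA, \AlgA'$ be distinct elements of $\mathcal{E}$. If $\AlgA \not\models \chi_{_\classV}(\AlgA')$ then $\AlgA' \in \CSub\CHom\AlgA$ by Theorem \ref{hom}; since $\AlgA' \not\cong \AlgA$ and $\AlgA$ is finite, $\AlgA'$ embeds into a proper subalgebra or a proper quotient of $\AlgA$, and the edge property of $\AlgA$ then places $\AlgA' \in \classV$, contradicting $\AlgA' \in \mathcal{E}$. Hence $\AlgA \models \chi_{_\classV}(\AlgA')$ while $\AlgA \not\models \chi_{_\classV}(\AlgA)$, and independence follows. $\land$-primality of each $\chi_{_\classV}(\AlgA)$ is immediate from Corollary \ref{meet}. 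The principal obstacle in the whole argument is the completeness step: realizing a refuting witness as an element of $\CSub\CHom\AlgB$ rather than merely $\CHom\CSub\AlgB$, which is precisely where the TD term (through EDPC and CEP) carries the weight.
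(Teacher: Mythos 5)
Your proof is correct and follows the same skeleton as the paper's: your axiom set is the paper's, since the $\classV$-edge algebras of $\classV_0$ coincide (up to isomorphism) with the set $MSI(\classV)$ of minimal finite s.i.\ algebras of $\classV_0 \setminus \classV$ used in the paper, and your independence, $\land$-primality (Corollary \ref{meet}) and variable-count (Proposition \ref{edgeax}) arguments match the paper's. The one step where you genuinely diverge is completeness. The paper only verifies that every \emph{finite s.i.} algebra of $\classV_0 \setminus \classV$ refutes some axiom, staying entirely inside Theorem \ref{hom} and the order \eqref{po}, and relies implicitly on the fact that a subvariety of a locally finite variety is determined by its finite s.i.\ members. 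You instead take an arbitrary $\AlgB \in \classV_0 \setminus \classV$ and push a bad finite s.i.\ witness into $\CSub\CHom\AlgB$ via the congruence extension property. That route is valid --- \eqref{adpc} does give EDPC, and EDPC implies CEP by Blok--Pigozzi \cite{Blk_Pgz_1}, a fact the paper never states but which is consistent with its framework --- and it buys a completeness statement for arbitrary algebras at the price of importing CEP, which the paper's version avoids altogether. Two small elisions you should patch: (i) in your minimality argument a proper subalgebra of $\AlgA$ need not be s.i., so to conclude it lies in $\classV$ you must first pass to an s.i.\ quotient of it (which again lands in $\CSub\CHom\AlgB$ by CEP and is strictly smaller) before invoking minimality; (ii) you should record that a $\classV$-edge algebra is finitely generated, hence finite by local finiteness, hence finitely presented in $\classV_0$ on any basis, so that the characteristic identities $\chi_{_{\classV_0}}(\AlgA)$ in $r_b(\AlgA)$ variables exist in the first place.
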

\begin{proof}
In order to prove the theorem we will demonstrate that 
\begin{enumerate}
\item The set $FSI(\classV_0 \setminus \classV$) has a non-empty set of minimal elements $MSI(\classV)$ and each algebra from $FSI(\classV_0 \setminus \classV)$ is comparable with some algebra from $MSI(\classV)$;
\item The set $\set{\chi(\AlgA)}{ \AlgA \in MSI(\classV)}$ is an independent axiomatization consisting of $\land$-prime identities;
\item $r_a(\classV) = \max \set{r_b(\AlgA)}{\AlgA \in MSI(\classV)}$.   
\end{enumerate}

(1) First, observe that, since $\classV \subsetneq \classV_0$, there must be a finitely generated s.i. algebra $\AlgA \in \classV_0 \setminus \classV$. The variety $\classV_0$ is locally finite, hence, by the definition, $\AlgA$ is finite and belongs to $FSI(\classV_0 \setminus \classV)$. We also saw that $\AlgA \leq \AlgB$ yields $|\algA| \leq |\algB|$. Hence, there is a minimal (w.r.t. $\leq$) algebra $\AlgA' \leq \AlgA$ and $\AlgA' \in FSI(\classV_0 \setminus \classV)$. Moreover, for each algebra $\AlgA \in FSI(\classV_0 \setminus \classV)$ there is an algebra $\AlgA' \in MSI(\classV)$ such that $\AlgA' \leq \AlgA$.

(2) In order to prove (2) first we need to demonstrate that $\Ax \bydef \set{ \chi(\AlgA)}{ \AlgA \in MSI(\classV)}$ is a set of axioms, that is we need to prove
\begin{itemize}
\item[(a)] every identity $\idn \in \Ax$ is valid in any algebra $\AlgB \in \classV$;
\item[(b)] for every $\AlgC \in FSI(\classV_0 \setminus \classV)$ there is such an identity $\idn \in \Ax$ that $\AlgC \not\models \idn$.
\end{itemize}

(a) Assume for contradiction that $\idn \in \Ax$ and $\AlgB \not\models \idn$ for some algebra $\AlgB \in \classV$. Recall that $\idn$ is a characteristic identity for an algebra from $\classV_0 \setminus \classV$, i.e. $\idn = \chi(\AlgA)$. Thus, $\AlgB \not\models \chi(\AlgA)$ and, by virtue of the Theorem \ref{hom}, $ \AlgA \in \CSub\CHom\AlgB$. Hence, $\AlgA \in \classV$ and this contradicts that $\AlgA \in \classV_0 \setminus \classV$.

(b) Let $\AlgC \in FSI(\classV_0 \setminus \classV)$. As we proved above, there is an algebra $\AlgA \in MSI(\classV)$ such that $\AlgA \leq \AlgC$. Thus, $\chi(\AlgA) \in \Ax$. Recall that $\AlgA \not\models \chi(\AlgA)$. Since $\AlgA \leq \AlgC$, i.e. $\AlgA \in \eqc(\AlgC)$, we can conclude $\AlgC \not\models \chi(\AlgA)$. 

So, we have proved that $\Ax$ is a complete set of axioms defining $\classV$ over $\classV_0$. We also know from the Corollary \ref{meet} that every characteristic identity is $\land$-prime. We need only to verify that $\Ax$ is an independent set. 

Indeed, assume for contradiction that $\Ax' \models_{\classV_0} \idn$ where $\idn \in \Ax$ and $\Ax' = \Ax \setminus \lbrace \idn \rbrace$. Recall that $\idn = \chi(\AlgA)$ for some $\AlgA \in MSI(\classV)$. Hence, since $\AlgA \not\models \idn$ and by the assumption $\Ax' \models \idn$, there must be $\idt \in \Ax'$ such that $\AlgA \not\models \idt$. But, by the definition, $\idt = \chi(\AlgB)$ for some $\AlgB \in MSI(\classV)$. Therefore,
\[
\AlgA \not\models \chi(\AlgB)
\]
and, by virtue of the Theorem \ref{hom},
\[
\AlgB \in \CSub\CHom\AlgA.
\]
Thus, $\AlgB \in \classV(\AlgA)$, i.e. $\AlgB \leq \AlgA$ and this contradicts that $\AlgA \in MSI(\classV)$. 

(3) First, let us observe that for every algebra $\AlgA$ from $MSI(\classV)$ all proper subalgebras and homomorphic images of $\AlgA$ belong to $\classV$, otherwise $\AlgA$ would not be minimal in $FSI(\classV_0 \setminus \classV)$. That is, $\AlgA$ is a $\classV$-edge algebra and, by virtue of Proposition \ref{edgeax},
\[
 \max\lbrace r_b(\AlgA): \AlgA \in MSI(\classV)\rbrace \leq r_a(\classV). 
\] 
Thus, all we need to do is while constructing $\Ax$ we need to take the characteristic identities of algebras $\AlgA \in MSI(\classV)$ containing $r_b(\AlgA)$ variables. And this is always possible because defining relations of any finitely presented algebra $\AlgA$ can be written in terms of any set of generators of $\AlgA$ (cf. \cite[Corollary 7 p.223]{MaltsevBook}). Thus, by selecting in $\AlgA$ a set of generators containing exactly $r_b(\AlgA)$ elements, we can construct a characteristic identity of $\AlgA$ containing not more than $r_b(\AlgA)$ distinct variables.
\end{proof}

Let us observe that if a variety $\classV$ is finitely axiomatizable in $\classV_0$ then any optimal axiomatization of $\classV$ in $\classV_0$ contains just a finite number of axioms.

\begin{example} (comp. \cite{Skvortsov_Remark_1999}) It is well known that any finite Heyting algebra $\AlgA$ belongs to a finite slice, that is for some $n$, $\AlgA \in \classV_n$ where $\classV_n$ is defined by a characteristic identity of $(n+1)$-element chain algebra. It is also well known that varieties $\classV_n$ are locally finite. Therefore, all the varieties generated by a finite Heyting algebra admit optimal axiomatization constructed of characteristic identities.    
\end{example}

\subsubsection{Algorithmic Problems} In the cases when a subvariety $\classV$ of a locally finite variety is finitely defined, for instance, if $\classV$ is defined by a finite set of axiom or by a finite algebra, one can ask whether there is an algorithm that gives an optimal axiomatization of $\classV$. The following theorem gives a positive answer to this question.

A variety $\classV$ is called \textit{equationally decidable} if there is an algorithm that by each identity $\idn$ decides whether $\classV \models \idn$ or not. For instance, any finitely axiomatized locally finite variety is decidable.

From \cite[p.163]{Hobby_Mckenzie_Book} we also recall that the \textit{free spectrum of variety }$\classV$ is the function $\sigma_\classV$ with domain being a set of positive integers, such that $\sigma_\classV(n) = |\AlgF_\classV(n)|$ where $\AlgF_\classV(n)$ is an algebra in $\classV$ freely generated by $n$ elements. For instance, if $\classV$ is a variety of Booleana algebras then $\sigma_\classV(n) = 2^{2^n}$.   

We say that a locally finite variety $\classV$ is \textit{effectively bounded} if there is such a computable function $\beta_\classV(n)$ that $\sigma_\classV(n) \leq \beta(n)$. For instance, if $\Distr$ is a variety of distributive lattices, then $\beta_\Distr(n) = 2^{2^n}$ even though $\sigma_\Distr(n)$ is unknown.

We say that a variety $\classV$ has  \textit{decidable membership problem for finite algebras} if there is an algorithm that by given finite algebra $\AlgA$ decides whether $\AlgA \in \classV$ or not (comp. \cite{McNulty_Szekely_Willard_2008}). 

The following theorem establishes the relations between three above properties for locally finite varieties. In order to prove the theorem we also need a notion of the \textit{depth $\delta(\idt)$ of a term} $\idt$: if $\idt$ is a variable or a constant, then $\delta(\idt) = 0$ and for each $n$-ary fundamental operation $f$ 
\[
\delta(f(\idt_1,\dots,\idt_m)) = \max\lbrace \delta(\idt_i): i=1,\dots,m\rbrace +1.  
\]
In other words, $\delta(\idt)$ represents the level of nestedness of fundamental operations in $\idt$. 

\begin{theorem} Let $\classV$ be a locally finite variety. Then
\begin{itemize}
\item[(a)] If $\classV$ is equationally decidable then there is an algorithm for constructing $\classV$-free algebras of any given  finite rank;
\item[(b)] $\classV$ is equationally decidable if and only if it is effectively bounded and has a decidable membership problem for finite algebras.
\end{itemize}
\end{theorem}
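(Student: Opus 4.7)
The plan is to use part (a) as the technical engine for part (b), so I would tackle (a) first. For (a), I will enumerate the elements of $\AlgF_\classV(n)$ by saturation. Start with a list $R = \{x_1, \ldots, x_n\}$ of representatives, viewed as depth-$0$ terms. At each stage, for every fundamental operation $f$ and every tuple of entries of $R$, form the one-step extension $t = f(r_{i_1}, \ldots, r_{i_k})$ and use the equational decision procedure to test, one by one, whether $t \sim_\classV r$ for some $r \in R$; if none, append $t$ to $R$. Halt the first time a complete pass adds nothing. When this happens, $R$ is closed under the fundamental operations modulo $\sim_\classV$ and contains all free generators, so its quotient by $\sim_\classV$ presents $\AlgF_\classV(n)$; the operation tables are read off from the equivalences recorded during the halting pass. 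Local finiteness forces termination, since $|R|$ (a set of pairwise $\sim_\classV$-distinct representatives) is bounded by $|\AlgF_\classV(n)| < \infty$ at every step.

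For the forward direction of (b), effective boundedness is immediate from (a): the algorithm outputs $\AlgF_\classV(n)$ for every $n$, hence $\sigma_\classV(n) = |\AlgF_\classV(n)|$ is itself a computable function and can serve as its own $\beta_\classV$. For finite membership, given a finite $\AlgA$ with elements $a_1, \ldots, a_n$ where $n = |\algA|$, I would first compute $\AlgF_\classV(n)$ and then test whether the assignment $x_i \mapsto a_i$ respects every equality $r \approx r'$ in the already-computed $\AlgF_\classV(n)$. This is a finite check, and it is correct because $\AlgA \in \classV$ iff the assignment $x_i \mapsto a_i$ extends to a (surjective) homomorphism $\AlgF_\classV(n) \twoheadrightarrow \AlgA$, iff every identity $r \approx r'$ of $\AlgF_\classV(n)$ is satisfied in $\AlgA$ under this substitution.

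For the reverse direction, suppose $\classV$ is effectively bounded via $\beta_\classV$ and has decidable finite membership, and let $\idn$ be an identity in $n$ variables. The plan is to enumerate (up to isomorphism) all finite $\Lang$-algebras of cardinality at most $\beta_\classV(n)$ --- a finite, mechanically producible list, since the signature is finite; use the membership decision procedure to extract those lying in $\classV$; and evaluate $\idn$ on each by brute force. Because $|\AlgF_\classV(n)| \leq \beta_\classV(n)$ and $\AlgF_\classV(n) \in \classV$, a copy of $\AlgF_\classV(n)$ appears in the sublist, and $\classV \models \idn$ iff $\AlgF_\classV(n) \models \idn$, so returning \textbf{yes} exactly when $\idn$ holds in every listed $\classV$-algebra is correct.

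I expect the main technical subtlety to lie in part (a), specifically in justifying the stopping criterion: I must show that once a pass of the saturation loop produces no new $\sim_\classV$-class, the current $R$ really presents $\AlgF_\classV(n)$. Soundness follows from closure of $R$ under the fundamental operations modulo $\sim_\classV$ together with the fact that $R$ contains the $n$ free generators, so the subalgebra they generate --- which is all of $\AlgF_\classV(n)$ --- is already represented in $R$. Termination is guaranteed by finiteness of $\AlgF_\classV(n)$ (local finiteness), and each individual equivalence test in the loop is effective by hypothesis. Once (a) is in place, both directions of (b) reduce to the bookkeeping described above.
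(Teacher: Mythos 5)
Your proposal is correct and follows essentially the same route as the paper: part (a) is the paper's construction of $\AlgF_\classV(n)$ as the term algebra modulo the decidable relation $\classV \models t \approx t'$ (your saturation loop with the "no new class" stopping criterion is the paper's depth-by-depth enumeration in a slightly different packaging, with the same local-finiteness termination argument), and both directions of (b) reduce to (a) and to brute-force enumeration of algebras of size at most $\beta_\classV(n)$ exactly as in the paper. The only cosmetic difference is that you justify the reverse direction of (b) by locating a copy of $\AlgF_\classV(n)$ in the enumerated list, where the paper appeals to validity in all $n$-generated algebras; these are interchangeable.
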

\begin{proof}
(a) Since $\classV$ is equationally decidable, if we have two terms $\idt_1,\idt_2$, we can effectively answer the question whether $\classV \models \idt_1 \approx \idt_2$ or not. Our goal is using this fact to show how algorithmically to construct the free algebra $\AlgF_\classV(n)$. 
We will construct (an isomorphic copy of) $\AlgF_\classV(n)$ as a quotient algebra $\AlgT_n/\equiv$ of the algebra $\AlgT_n$ of all terms in variables $x_1,\dots,x_n$ by congruence $\equiv$, where
\[
\idt_1 \equiv \idt_2 \text{ if and only if } \classV \models \idt_1 \approx \idt_2.
\]   

We will list the terms from $\algT_n$ in the order of their increasing depth. In order to do so, first we take all the variables $x_1,\dots,x_n$. If we established that $x_i \equiv x_j$ for some $i \neq j$, then we can conclude that the variety $\classV$ is degenerate and, hence, $\AlgF_\classV(n)$ is a one-element algebra. Otherwise each variable belongs to a separate congruence class and we have $[x_1]_\equiv, \dots, [x_n]_\equiv \in \AlgT_n/\equiv$. 

Next, for each 0-ary operation (constant) $f$ we determine whether $f \equiv x_i$ for some $i$ and, therefore, $f \in [x_i]_\equiv$, or $[f]_\equiv$ is in a new congruence class. Thus, for each term of the depth 0 we determined to which congruence class this term belongs.   

If we have assigned all the terms of depth $< n$ to the $\equiv$-congruence classes we turn to the terms of the depth $n$. If all the terms of the depth $n$ belong to a congruence class containing a term of the depth $< n$, we stop. Otherwise we add new congruence classes (elements of $\AlgF_\classV(n)$) and then we take all the terms of the depth $n+1$ and repeat this process. Since the variety $\classV$ is locally finite and therefore $\AlgF_\classV(n)$ is finite (i.e. there is just a finite set of distinct $\equiv$-congruence classes), the process will stop. 

Using a simple induction on the depth of term one can prove that if for every $t \in \algT_n$   
\begin{equation}
\text{ there is a term } \idt' \text{ such that } \delta(\idt') < n \text{ and }  \idt \equiv \idt', \label{alg1}
\end{equation} 
then \eqref{alg1} is true for any term $t$ of depth $\ge n$.

By virtue of the Theorem 2 \cite[p.163]{GraetzerB}, algebra $\AlgT_n/\equiv$ is isomorphic with $\AlgF_\classV(n)$.

(b) From (a) it follows that if $\classV$ is equationally decidable, then for each $n$ we can construct the algebra $\AlgF_\classV(n)$ and, for $\AlgF_\classV(n)$ is finite, we can calculate the number of elements in it and in such a way to obtain the value of $\sigma_\classV(n)$ or $\beta_\classV(n)$ for this matter. Thus, any equationally decidable locally finite variety has a computable free spectrum and, hence, is effectively bounded.  

If a variety $\classV$ is equationally decidable, for each given finite algebra $\AlgA = \lbrace \alga_1,\dots,\alga_n\rbrace$ we can construct $\AlgF_\classV(n)$ using variables $x_\alga$ and then check whether the mapping $\phi: [x_\alga]_\equiv \mapsto \alga$ can be extended to a homomorphism of $\AlgF_\classV(n)$ onto $\AlgA$. From the properties of free algebras it easy follows that $\AlgA \in \classV$ if and only if $\phi$ can be extended to a homomorphism. Thus, if $\classV$ is equationally decidable then it is effectively bounded and has decidable membership problem for finite algebras\footnote{Note, that in general the decidability of equational theory does not yield the decidability of the membership problem for finite algebras (cf. \cite{Jezek_Decidable_1998}).}. We remind the reader that we consider only algebras with finite number of finite-ary operations, thus we can effectively check whether a mapping from one finite algebra in another is a homomorphism or not. 

Conversely, assume that $\classV$ is effectively bounded and has decidable membership problem for finite algebras. Let $\idn$ be an identity and we need to determine whether $\classV \models \idn$ or not. Let $n$ be a rank of $\idn$, that is $n$ is a number of distinct variables occurring in $\idn$. Clearly, an identity of rank $n$ is valid in $\classV$ if and only if it is valid in every $n$-generated algebra from $\classV$. Since the variety $\classV$ is effectively bounded, we know that any $n$-generated $\classV$-algebra has power $\leq \beta_\classV(n)$. Therefore, in order to determine whether an identity $\idn$ is valid in $\classV$ we can effectively list all the algebras having power not exceeding $\beta_\classV(n)$ and check whether $\idn$ hods in the algebra or not. This former is possible due to decidability of the membership problem for finite algebras. The latter is possible because all selected algebras are finite. 
\end{proof}

The following Corollary gives a simple sufficient condition for a locally finite variety to be effectively bounded and to have  decidable membership problem for finite algebras. 

\begin{cor} Let $\classV$ be a locally finite variety. If $\classV$ is finitely axiomatized then $\classV$ is effectively bounded and has decidable membership problem for finite algebras.
\end{cor}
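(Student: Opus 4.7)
The plan is to reduce the corollary to part~(b) of the preceding theorem: once we know that a locally finite, finitely axiomatized variety $\classV$ is equationally decidable, both conclusions (effectively bounded, and decidable membership problem for finite algebras) follow at once. So the real task is to establish equational decidability.

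Fix a finite axiom set $\Sigma$ defining $\classV$, and let $\idn = (s \approx t)$ be an identity of rank $n$ whose validity in $\classV$ we wish to test. I would run two semi-decision procedures in parallel. The first enumerates equational proofs from $\Sigma$; by Birkhoff's completeness theorem, $\classV \models \idn$ if and only if $\idn$ is derivable from $\Sigma$ in equational logic, so this procedure halts on every positive instance. The second enumerates, up to isomorphism, all finite algebras in the signature of $\classV$ (restricting to universes $\{1,\dots,k\}$ for increasing $k$ is enough), keeps only those satisfying every axiom of $\Sigma$ — which is decidable, since $\Sigma$ and each candidate are finite — and for each such $\AlgA \in \classV$ tests whether $\AlgA \not\models \idn$. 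If $\classV \not\models \idn$, then $\idn$ already fails in $\AlgF_\classV(n)$, which is a member of $\classV$ and is finite by local finiteness; so this process halts on every negative instance. One of the two procedures must terminate, yielding a decision.

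With equational decidability in hand, part~(b) of the preceding theorem immediately delivers both conclusions and closes the proof. I do not foresee any serious obstacle: the only substantive point is the classical observation that finite axiomatization makes the equational theory recursively enumerable, while local finiteness makes its complement recursively enumerable through the existence of a finite counter-model in $\classV$ itself. Everything else is bookkeeping — enumerating finite algebras in a fixed finite signature and checking finitely many axioms on each, both of which are standard.
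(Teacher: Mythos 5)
Your proof is correct and follows essentially the same route as the paper: the paper likewise disposes of the membership problem by checking the finitely many axioms on a given finite algebra and then appeals to the ``well known'' fact that finitely axiomatized locally finite varieties are equationally decidable, which combined with part~(b) of the preceding theorem gives effective boundedness. The only difference is that you spell out the Harrop-style parallel search (proof enumeration versus counter-model search in the finite free algebra $\AlgF_\classV(n)$) that the paper leaves as a citation, which makes your version somewhat more self-contained but not a different argument.
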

\begin{proof} If a variety $\classV$ is finitely axiomatized then it has decidable membership problem for finite algebras: one simply needs to verify whether every axiom is valid in a given finite algebra. It is also well known that finitely axiomatized locally finite (and even finitely approximated) varieties are equationally decidable.
\end{proof}

\begin{theorem} Let $\classV_0$ be a locally finite effectively bounded variety with decidable membership problem for finite algebras. Then the following hold:
\begin{itemize}
\item[(a)] There is an algorithm that by a finite set of axioms $\lbrace \idn_i: i=1,\dots,n \rbrace$ defining a subvariety $\classV \subset \classV_0$ gives an optimal axiomatization of $\classV$ over $\classV_0$; 
\item[(b)] There is an algorithm that by a finite algebra $\AlgA$ generating a subvariety $\classV \subseteq \classV_0$ gives an optimal axiomatization of $\classV$ over $\classV_0$.
\end{itemize} 
\end{theorem}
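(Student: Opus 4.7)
The plan is to exploit Theorem \ref{optax}: the optimal axiomatization of $\classV$ over $\classV_0$ is exactly $\{\chi(\AlgB) : \AlgB \in MSI(\classV)\}$. Both algorithms then have the same shape --- enumerate the finite algebras of $\classV_0$ up to an effective bound, retain the subdirectly irreducible $\classV$-edge ones, and output their characteristic identities --- and differ only in how membership in $\classV$ is decided and how the search is bounded. The ingredients common to both parts are already available from the standing hypotheses together with the preceding theorem: finite algebras of $\classV_0$ of any prescribed cardinality can be enumerated via the algorithm for $\classV_0$-free algebras (which applies because $\classV_0$ is equationally decidable under our hypotheses) and decidable $\classV_0$-membership; for every finite algebra one computes its congruence lattice, hence its subalgebras and quotients. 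For each edge s.i.\ algebra $\AlgB$ one writes $\chi(\AlgB)$ by choosing a basis of cardinality $r_b(\AlgB)$, constructing defining relations in that basis (existence guaranteed by Tietze's theorem), selecting a pair of indistinguishable elements from the monolith, and substituting into the TD term as in \eqref{chari}.

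For (a), let $r$ denote the maximum rank among the given axioms $\idn_1,\dots,\idn_n$; then $r_a(\classV)\le r$, and by Proposition \ref{edgeax} every $\AlgB\in MSI(\classV)$ satisfies $r_b(\AlgB)\le r$, hence $|\AlgB|\le \beta_{\classV_0}(r)$. Membership in $\classV$ for a finite algebra is decided by evaluating the given axioms. The algorithm therefore enumerates all finite algebras of $\classV_0$ of cardinality at most $\beta_{\classV_0}(r)$, retains those that are subdirectly irreducible, not in $\classV$, and whose proper subalgebras and proper quotients all lie in $\classV$, and outputs the characteristic identities of these in bases of minimum cardinality.

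For (b), membership in $\classV=\eqc(\AlgA)$ for a finite algebra $\AlgB$ of size $k$ is decidable: $\AlgB\in\classV$ iff $\AlgB$ is a homomorphic image of the free algebra $\AlgF_\classV(k)$, and the latter is effectively constructible as the subalgebra of $\AlgA^{|\AlgA|^{k}}$ generated by the $k$ projections. A TD term forces $\classV_0$ (and therefore $\classV$) to be congruence distributive, so Baker's finite-basis theorem applies to the finitely generated variety $\classV$ and produces a computable upper bound $B$ on $r_a(\classV)$ in terms of $|\AlgA|$ and the signature. The algorithm then runs exactly as in (a) with $r$ replaced by $B$: every $\AlgB\in MSI(\classV)$ appears among the finite algebras of $\classV_0$ of cardinality at most $\beta_{\classV_0}(B)$, and their characteristic identities in minimum bases form the desired optimal axiomatization.

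The main obstacle is extracting the effective bound $B$ in (b); the purely existential form of Baker's theorem would not be enough, since without a stopping criterion the dovetailed search over ever larger candidates need not terminate. The resolution is the constructive proof of Baker's theorem for congruence distributive varieties, available here because a TD term supplies equationally definable principal congruences and hence congruence distributivity; this yields $B$ explicitly in terms of $|\AlgA|$ and the arities of the fundamental operations. With $B$ in hand, termination of the enumeration is immediate from the effective boundedness of $\classV_0$, and the remaining steps --- recognizing edge algebras and writing characteristic identities in minimum bases --- are finite-algebraic bookkeeping already licensed by the preceding theorem.
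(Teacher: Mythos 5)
Your algorithm for (a) is essentially the paper's: bound $r_a(\classV)$ by the rank of the given axioms, bound the basis rank of every algebra in $MSI(\classV)$ via Proposition \ref{edgeax}, hence bound their cardinality by $\beta_{\classV_0}$, enumerate the finite algebras of $\classV_0$ up to that size, test membership by evaluating the axioms, and keep the minimal (equivalently, the s.i.\ $\classV$-edge) algebras, outputting their characteristic identities over minimum generating sets. For (b) you reach the same algorithm by a genuinely different and much heavier route. Where you decide $\AlgB \in \eqc(\AlgA)$ by testing whether $\AlgB$ is a homomorphic image of $\AlgF_{\classV}(|\AlgB|)$ realized inside a power of $\AlgA$, the paper simply checks whether $\AlgA \not\models \chi_{_{\classV_0}}(\AlgB)$, reusing Theorem \ref{hom}; both tests are effective. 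The substantive divergence is the bound on the search: you invoke an effective form of Baker's finite basis theorem to bound $r_a(\eqc(\AlgA))$, whereas the paper uses the elementary classical fact that the variety generated by an algebra of cardinality $n$ is axiomatized by its identities in at most $n$ variables, so $r_a(\classV)\le|\AlgA|$, every algebra of $MSI(\classV)$ is $|\AlgA|$-generated by Proposition \ref{edgeax}, and the search stops at $\beta_{\classV_0}(|\AlgA|)$ with no machinery at all. Your route can be made to work --- a TD term gives EDPC, hence congruence distributivity, and effective finite-basis results for finitely generated congruence-distributive varieties exist --- but note that the bound is not computable from $|\AlgA|$ and the arities alone, as you claim: the constants in the effective versions of Baker's theorem depend on the J\'onsson terms of $\classV_0$, which must first be found (this is possible, e.g.\ by exhaustive search, since $\classV_0$ is equationally decidable under the standing hypotheses, but you do not say so). So the Baker detour is the one place where effectivity is asserted rather than exhibited, and it is avoidable: the $|\AlgA|$-variable bound closes the same hole in one line.
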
  
\begin{proof} From the proof of the Theorem \ref{optax} we know that we can obtain the optimal axiomatization by using the characteristic identities algebras from $MSI(\classV)$. So, all we need to do is to show that $MSI(\classV)$ can be constructed effectively. 

First, let us observe that under the assumption of the theorem all algebras from $MSI(\classV)$ are $k$-generated, where $k$ is a number of distinct variables occurring in the axioms $\lbrace \idn_i: i=1,\dots,n \rbrace$, or $k = |\AlgA|$. And let $m = \beta_\classV(k)$. Recall that the variety $\classV$ is effectively bounded, hence, $m$ is effectively computable.   

Next, we can take all distinct modulo isomorphism algebras from $\classV_0$ having no more than $m$ elements. And we can select only s.i. algebras and we denote this set by $\classK_0$. This can be done effectively because $\classV_0$ has decidable membership problem for finite algebras.

Now we can determine which algebras from $\classK_0$ belong to $\classV$: 
given an algebra $\AlgB \in \classK_0$ in the case (a) we can check whether 
\[
\AlgB \models \idn_i \text{ for all } i=1,\dots,n,
\]  
or, in the case (b), whether
\[
\AlgA \not\models \chi_{_{\classV_0}}(B).
\]

Let $\classK = \classK_0 \cap \classV$. If $\classK_0 = \classK$, i.e. $\classK_0 \subseteq \classV$, then $\classV = \classV_0$. Otherwise, in $\classK_0 \setminus \classK$ we can select the minimal w.r.t partial order \eqref{po} algebras. Since $\classK_0$ is a finite set, the set of algebras we have selected is finite. Using the characteristic identities of these algebras we can obtain an optimal axiomatization of the variety $\classV$.   
\end{proof}

\subsubsection{Varieties not Axiomatizable by Characteristic Identities} \label{not ax}

As we saw, the characteristic identities are $\land$-prime and it is natural to ask which varieties can be defined by such identities. The following simple proposition shows that not every variety can be defined by the characteristic identities. 

If $\classV$ is a variety, by $\classV^\circ$ we will denote a subvariety generated by finite members of $\classV$. Recall that a variety $\classV$ is called \textit{finitely approximated} (f.a.) if $\classV = \classV^\circ$. A variety $\classV$ is said to be \textit{hereditarily finitely approximated} if $\classV$ and all its subvarieties are finitely approximated. A variety $\classV$ is called \cite{Gerchiu_1972} \textit{finitely pre-approximated} if $\classV$ is not finitely approximated but all proper subvarieties of $\classV$ are finitely approximated. 

The following Lemma gives a sufficient condition for a variety $\classV$ with a TD term (or even a congruence distributive variety) to have a subvariety that cannot be defined in $\classV$ by characteristic identities (that is not an intersection of the co-splitting subvarieties).

\begin{lemma} \label{nfadef} (comp. \cite[Corollary 3 p.57]{Tomaszewski_PhD}) Let $\classV'$ be a variety and $\classV \subset \classV'$ be a not f.a. subvariety of $\classV$. Then subvariety $\classV^\circ$ is not definable in $\classV'$ by any characteristic identities of algebras from $\classV'$.
\end{lemma}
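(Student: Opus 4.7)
The plan is to argue by contradiction. Suppose that $\classV^\circ$ is definable inside $\classV'$ by a set $\Gamma = \{\chi_{_{\classV'}}(\AlgA_i) : i \in I\}$ of characteristic identities of finitely presented s.i.\ algebras in $\classV'$. The key tools will be the correspondence provided by Theorem~\ref{hom} between refutation of $\chi_{_{\classV'}}(\AlgA_i)$ and membership of $\AlgA_i$ in $\CSub\CHom$, together with the self-refutation property \eqref{selfrefut}.

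First, because $\classV$ is not finitely approximated, $\classV^\circ \subsetneq \classV$ and we may pick an algebra $\AlgB \in \classV \setminus \classV^\circ$. Since $\AlgB \in \classV'$ violates the axiomatisation $\Gamma$, there is some $i_0 \in I$ with $\AlgB \not\models \chi_{_{\classV'}}(\AlgA_{i_0})$. Theorem~\ref{hom} then yields $\AlgA_{i_0} \in \CSub\CHom\AlgB \subseteq \classV$, the inclusion using closure of $\classV$ under $\CSub$ and $\CHom$. Next one verifies self-refutation: substituting the distinguished generators $\ua$ of $\AlgA_{i_0}$ into its characteristic identity turns each defining relation into an equality, so by \eqref{td1} the two sides collapse to $r_1(\ua)$ and $r_2(\ua)$, the distinct indistinguishable elements witnessing s.i.-ness of $\AlgA_{i_0}$; hence $\AlgA_{i_0} \not\models \chi_{_{\classV'}}(\AlgA_{i_0}) \in \Gamma$, and so $\AlgA_{i_0} \notin \classV^\circ$. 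Finally, since $\classV^\circ$ is by definition the variety generated by the finite members of $\classV$, every finite algebra of $\classV$ lies in $\classV^\circ$; combined with $\AlgA_{i_0} \in \classV \setminus \classV^\circ$ this forces $\AlgA_{i_0}$ to be infinite.

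The main obstacle is to convert this last observation into an outright contradiction. In the settings of greatest interest in the paper (e.g.\ $\classV'$ a subvariety of $\Heyt$, or more generally $\classV'$ finitely approximated with equationally definable principal congruences) every finitely presented s.i.\ algebra in $\classV'$ is already finite, so the conclusion ``$\AlgA_{i_0}$ is finitely presented in $\classV'$ and infinite'' is impossible and the argument is complete. To handle the fully general case I would proceed by applying the argument recursively, using $\AlgA_{i_0}$ in place of $\AlgB$ to obtain further infinite finitely presented s.i.\ witnesses in $\classV \setminus \classV^\circ$, and invoke structural facts such as \cite[Corollary 2.15]{Citkin_Splitting_2012} to rule out the existence of such algebras in the relevant subvarieties.
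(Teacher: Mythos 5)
Your argument follows the paper's proof step for step: assume $\classV^\circ$ is cut out of $\classV'$ by identities $\chi_{_{\classV'}}(\AlgA_i)$, pick $\AlgB \in \classV \setminus \classV^\circ$ (possible because $\classV$ is not f.a.), use the definability hypothesis to find $i_0$ with $\AlgB \not\models \chi_{_{\classV'}}(\AlgA_{i_0})$, apply Theorem~\ref{hom} to get $\AlgA_{i_0} \in \CSub\CHom\AlgB \subseteq \classV$, and use self-refutation (via \eqref{td1}) to get $\AlgA_{i_0} \notin \classV^\circ$. Up to this point you and the paper agree exactly.

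Where you stop is the instructive part. The paper declares victory here: its conclusion from self-refutation is displayed as ``$\AlgA_i \in \classV' \setminus \classV$,'' which formally contradicts ``$\AlgA_i \in \classV$.'' But the definability hypothesis concerns $\classV^\circ$, not $\classV$, so self-refutation only yields $\AlgA_{i_0} \notin \classV^\circ$; once the $\classV$/$\classV^\circ$ conflation in \eqref{eqaxiom} and \eqref{axiom0} is repaired, the paper's ``contradiction'' is precisely the statement you found yourself unable to close, namely that an s.i.\ algebra finitely presented in $\classV'$ sits in $\classV \setminus \classV^\circ$ and is therefore infinite. So you have correctly located a genuine gap in the paper's own argument, not merely in yours. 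Your repair for the case where finitely presented s.i.\ algebras of $\classV'$ must be finite is the right one, and it covers every use the paper makes of the lemma: in Theorem~\ref{critfa} the lemma is invoked with $\classV'$ finitely approximated and carrying a TD term, and the paper itself records (citing \cite[Corollary 3.3]{Blk_Pgz_1}) that in such a variety every finitely presented s.i.\ algebra is finite, whence $\AlgA_{i_0} \in \classV^\circ$ and the contradiction is real. Your fallback for the fully general statement, however, is not a proof: iterating the construction only manufactures further algebras in $\classV \setminus \classV^\circ$ with no visible termination, and \cite[Corollary 2.15]{Citkin_Splitting_2012} concerns one specific Heyting algebra and supplies no general obstruction. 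As stated, with no finiteness or finite-approximability hypothesis on $\classV'$, the lemma needs either an added assumption or an argument neither you nor the paper has given.
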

\begin{proof} Assume the contrary: the subvariety $\classV^\circ$ is defined in $\classV'$ by characteristic identities of some algebras $\AlgA_i \in \classV'; i\in I$, that is, for every algebra $\AlgB$
\begin{equation}
\AlgB \in \classV \text{ if and only if } \AlgB \models \chi_{_{\classV'}}(\AlgA_i). \label{eqaxiom}
\end{equation} 

Let us observe that $\AlgA \not\models \chi_{_{\classV'}}(\AlgA)$, hence  
\begin{equation}
\AlgA_i \in \classV' \setminus \classV. \label{axiom0}
\end{equation}

Recall that the subvariety $\classV$ is not f.a., thus there is an algebra $\AlgB \in \classV \setminus \classV^\circ$. Hence, by \eqref{eqaxiom} there is such an algebra $\AlgA_i; i \in I$ that
\begin{equation}
\AlgB \not\models \chi_{_{\classV'}}(\AlgA_i) \label{axiom1}.
\end{equation}
By virtue of the Theorem \ref{hom},
\begin{equation*}
\AlgA_i \in \CSub\CHom\AlgB,
\end{equation*}
therefore, for $\AlgB \in \classV$, we have
\begin{equation*}
\AlgA_i \in \classV,
\end{equation*}
and the latter contradicts \eqref{axiom0}.
\end{proof}

Now we can prove a rather simple criterion for a finitely approximated variety with a TD term to have all subvarieties to be definable by characteristic identities.

\begin{theorem} \label{critfa} (comp. \cite[Corollary 4 p.57]{Tomaszewski_PhD}) Let $\classV$ be a f.a. variety with a TD term. Then the following is equivalent
\begin{itemize}
\item[(a)] variety $\classV$ is hereditarily f.a.;
\item[(b)] every subvariety of $\classV$ is definable in $\classV$ by characteristic identities.
\end{itemize}
\end{theorem}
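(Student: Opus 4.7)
The plan is to handle each direction separately. For $(b)\Rightarrow(a)$ I would argue by contrapositive, essentially unpacking Lemma~\ref{nfadef}: if $\classV$ fails to be hereditarily f.a., then some subvariety $\classV_1\subseteq\classV$ is itself not f.a., so $\classV_1^\circ \subsetneq \classV_1$. Applying Lemma~\ref{nfadef} with ambient variety $\classV$ and non-f.a.\ subvariety $\classV_1$, I conclude that the subvariety $\classV_1^\circ$ of $\classV$ cannot be defined in $\classV$ by any characteristic identities of algebras from $\classV$, which negates (b).

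For $(a)\Rightarrow(b)$ I would produce an explicit axiomatization. Given a subvariety $\classV_1\subseteq\classV$, set
\[
\Ax \bydef \set{\chi_{_\classV}(\AlgA)}{\AlgA \in FSI(\classV),\ \AlgA \notin \classV_1}
\]
and verify that $\classV_1 = \set{\AlgB\in\classV}{\AlgB\models\Ax}$. The inclusion ``$\subseteq$'' is immediate from Theorem~\ref{hom}: if $\AlgB\in\classV_1$ refuted some $\chi_{_\classV}(\AlgA)\in\Ax$, then $\AlgA\in\CSub\CHom\AlgB\subseteq\classV_1$, contradicting $\AlgA\notin\classV_1$. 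For the other inclusion, suppose $\AlgB\in\classV\setminus\classV_1$. Using f.a.\ of $\classV$, I embed $\AlgB$ subdirectly into a product of finite algebras $\AlgC_i\in\classV$, each a homomorphic image of $\AlgB$; by Birkhoff I further decompose each finite $\AlgC_i$ as a subdirect product of its finite s.i.\ quotients $\AlgA_{i,j}$, so that $\AlgB$ itself becomes a subdirect product of finite s.i.\ algebras $\AlgA_{i,j}\in FSI(\classV)$ all lying in $\CHom\AlgB$. Since $\classV_1$ is closed under subalgebras and products, $\AlgB\notin\classV_1$ forces at least one $\AlgA_{i,j}\notin\classV_1$; then $\AlgA_{i,j}\in\CHom\AlgB\subseteq\CSub\CHom\AlgB$, and Theorem~\ref{hom} yields $\AlgB\not\models\chi_{_\classV}(\AlgA_{i,j})\in\Ax$.

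The only delicate point I anticipate is making sure that every finite $\AlgA\in\classV$ is finitely presented in $\classV$, so that $\chi_{_\classV}(\AlgA)$ is actually defined; this I would resolve by presenting $\AlgA$ via its full diagram, which works inside any variety containing $\AlgA$. As a sanity check, my $(a)\Rightarrow(b)$ argument uses only f.a.\ of $\classV$ (which is already part of the hypothesis) together with the TD term, so the genuine asymmetry of the equivalence sits entirely in $(b)\Rightarrow(a)$, where Lemma~\ref{nfadef} supplies the obstruction.
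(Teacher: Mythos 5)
Your $(b)\Rightarrow(a)$ direction coincides with the paper's: both simply invoke Lemma~\ref{nfadef}. The genuine gap is in $(a)\Rightarrow(b)$, at the step where you ``embed $\AlgB$ subdirectly into a product of finite algebras $\AlgC_i\in\classV$, each a homomorphic image of $\AlgB$, using f.a.\ of $\classV$.'' Finite approximability is a property of the variety (it is generated by its finite members); it does not make every individual member of the variety residually finite. In fact, if an s.i.\ algebra $\AlgB$ embeds subdirectly into a product of its own quotients, one of the projections must be an isomorphism, so an \emph{infinite} s.i.\ algebra admits no such decomposition --- and infinite s.i.\ algebras exist even in hereditarily f.a.\ varieties (e.g.\ the infinite chain in the variety of linear Heyting algebras). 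Your own ``sanity check'' is the tell-tale sign that something is wrong: if $(a)\Rightarrow(b)$ really used only f.a.\ of $\classV$, it would apply to $\classV=\Heyt$, which is f.a.\ but, by Example~\ref{heytnotax} (equivalently, by Lemma~\ref{nfadef}), has subvarieties that are \emph{not} definable by characteristic identities. So the hereditary hypothesis must be used in $(a)\Rightarrow(b)$ as well, and your argument never invokes it.

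The repair --- and the paper's actual argument --- is to run the reverse inclusion at the level of varieties rather than of a single algebra. Let $\classV_0\bydef\set{\AlgB\in\classV}{\AlgB\models\Ax}$ with your set $\Ax$. Your first inclusion gives $\classV_1\subseteq\classV_0$. If this inclusion were proper, then, since $\classV_0$ is a subvariety of the hereditarily f.a.\ variety $\classV$, it is generated by its finite s.i.\ members, so there is a finite s.i.\ algebra $\AlgB\in\classV_0\setminus\classV_1$. But then $\chi_{_\classV}(\AlgB)\in\Ax$ while $\AlgB\not\models\chi_{_\classV}(\AlgB)$, contradicting $\AlgB\in\classV_0$. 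This is precisely where hereditary finite approximability enters: it is applied to the a priori unknown subvariety $\classV_0$, not to $\classV$ itself. Your closing remark, that every finite algebra is finitely presented in any variety containing it via its full diagram, is correct and agrees with the paper.
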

\begin{proof} 
(a) $\dimpl$ (b). Let $\classV$ be a hereditarily f.a. and $\classV' \subset \classV$ be a subvariety of $\classV$. Let us verify that the characteristic identities of all finite s.i. algebras from $\classK \bydef FSI(\classV \setminus \classV')$ define $\classV'$ in $\classV$.
Indeed, let $\classV_0$ is a subvariety defined in $\classV$ by all characteristic identities of algebras from $\classK$, i.e.
\begin{equation} 
\classV_0 = \set{\AlgA \in \classV}{\AlgA \models \chi_{_\classV}(\AlgB), \AlgB \in \classK  }. \label{v0}
\end{equation}
From the Theorem \ref{hom} it follows 
\begin{equation*}
\classV' \models \chi_{_\classV}(\AlgA) \text{ for all } \AlgA \in \classK, 
\end{equation*}  
hence,
\begin{equation*}
\classV' \subseteq \classV_0. 
\end{equation*}

We need to show that $\classV' = \classV_0$. For contradiction, assume $\classV' \subset \classV_0$. Recall that the variety $\classV$ is hereditarily f.a., that is all its subvarieties are generated by finite s.i. algebras. Hence, there is a finite s.i. algebra $\AlgB$ such that $\AlgB \in \classV_0 \setminus \classV'$. So, $\AlgB \in \classV \setminus \classV'$ and, therefore, $\AlgB \in \classK$. From the Theorem \ref{hom} it follows that $\AlgB \not\models \chi_{_\classV}(\AlgB)$, thus
\[
\AlgB \not\models \chi_{_\classV}(\AlgB) \text{, while }\AlgB \in \classV_0 \text{ and } \AlgB \in \classK 
\]
and this contradicts \eqref{v0}.

(b) $\dimpl$ (a) follows immediately from the Lemma \ref{nfadef}: if a variety $\classV$ is f.a. but not hereditary, then it contains a not f.a. subvariety and, hence, by virtue of Lemma \ref{nfadef}, it has a subvariety that cannot be defined by characteristic identities. 
\end{proof}

\begin{example} \label{heytnotax} It is well known that the variety $\Heyt$ of Heyting algebras is f.a. It is also observed by V.~Jankov \cite{Jankov_1968} that there are not f.a. subvarieties of $\Heyt$. Hence, $\Heyt$ contains the subvarieties that are not definable by characteristic identities. Moreover, in \cite{Mardaev_Embedding_1987} S.~Mardaev proved that there is a continuum of finitely pre-approximated varieties of Heyting algebras and Brouwerian lattices. Hence, there is a continuum of varieties of Heyting algebras (or Brouwerian lattices) that are not axiomatizable by characteristic formulas. Or, in terms of logics, there is a continuum of intermediate logics not axiomatizable by characteristic formulas (comp. \cite[Corollary p.128]{Tomaszewski_PhD}).
\end{example}

\subsection{Characteristic Identities vis a vis Splitting}

In the Section \ref{charsplit} we already pointed out that every characteristic formula defines a splitting. For the varieties with a TD term the situation remains the same: by the Theorem \ref{thrmord} every characteristic identity defines splitting (a co-splitting variety, to be more precise). A converse statement is not true: not every co-splitting variety is defined by a characteristic formula. Or, in other words, not every splitting algebra in a variety $\classV$ is finitely presented in $\classV$. In \cite[Theorem 7.5.16]{Kracht_Tools_1999} M.~Kracht provided a counter-example to his conjecture from \cite{Kracht_1990} that every splitting algebra is finitely presented (see also \cite{Citkin_Splitting_2012}). If we recall that every splitting identity is interderivable with a $\land$-prime identity, we can conclude that in the not finitely approximated varieties $\land$-prime identity does not have to be a characteristic identity.  

More information regarding splittings in the lattice of modal logics the reader can find in \cite{Wolter_PhD}.  For more information regarding characteristic formulas and splitting in the varieties of residuated lattices we refer the reader to \cite{Kowalski_Ono_2000,Kowalski_Ono_Remarks_2000,Kowalski_Miyazaki_All_2009,Galatos_et_Book}.

\subsection{Characteristic vis a vis Pre-True Identities}
In this section we will study the relations between characteristic and pre-true identities. We start with the definition and properties of pre-true identities.

\subsubsection{Pre-true Identities} \label{PreTrue}

It is natural to extend the definition of the pre-true formula to identities: an identity $\idn$ is \textit{pre-true in an algebra} $\AlgA$ if it is not valid in $\AlgA$ but it is valid in every proper subalgebra and every proper homomorphic image of $\AlgA$ (a \textit{subalgebra $\AlgA'$ of $\AlgA$ is proper} if $\algA' \subsetneq \algA$ and a \textit{homomorphic image of $\AlgA$ is proper} if the kernel congruence is distinct from the identity congruence). As we will see in this section even though the pre-true identities are very similar to the characteristic identities they are not the same: some pre-true identities are not characteristic and vise verse.

\begin{prop} \label{pretrueprop1} Let $\AlgA$ be an algebra and $t(x_1,\dots,x_n) \approx t'(x_1,\dots,x_n)$ be a pre-true in $\AlgA$ identity. If for some elements $\alga_1,\dots,\alga_n \in \AlgA$ 
\[
t_1(\alga_1,\dots,\alga_n) \neq t'(\alga_1,\dots,\alga_n),
\]
then 
\begin{itemize}
\item[(i)] the elements $\alga_1,\dots,\alga_n$ generate $\AlgA$; 
\item [(ii)] $t_1(\alga_1,\dots,\alga_n) \equiv t'(\alga_1,\dots,\alga_n) \quad \pmod{\mu(\AlgA)}$, where $\mu(\AlgA)$ is the monolith of $\AlgA$.
\end{itemize}
\end{prop}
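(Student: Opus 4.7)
The plan is to establish both parts by direct contradiction arguments using the definition of pre-true, and along the way observe that $\AlgA$ is subdirectly irreducible (so that $\mu(\AlgA)$ is meaningful in part (ii)).

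First I would verify that $\AlgA$ is s.i. Assume not; then the intersection of all non-identity congruences of $\AlgA$ is the identity congruence, so since $t(\alga_1,\dots,\alga_n) \neq t'(\alga_1,\dots,\alga_n)$ there must exist $\theta \in Con'(\AlgA)$ with $t(\alga_1,\dots,\alga_n) \not\equiv t'(\alga_1,\dots,\alga_n) \pmod{\theta}$. The quotient $\AlgA/\theta$ is then a proper homomorphic image of $\AlgA$ in which the valuation $x_i \mapsto [\alga_i]_\theta$ refutes the identity $t \approx t'$, contradicting pre-truth.

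For (i) I would argue contrapositively. Suppose the elements $\alga_1,\dots,\alga_n$ do not generate $\AlgA$, and let $\AlgA'$ be the subalgebra of $\AlgA$ they generate. Then $\AlgA'$ is a proper subalgebra. The values $t(\alga_1,\dots,\alga_n)$ and $t'(\alga_1,\dots,\alga_n)$ depend only on applying fundamental operations to $\alga_1,\dots,\alga_n$, hence they lie in $\AlgA'$; and since the inclusion $\AlgA' \hookrightarrow \AlgA$ is injective, these elements remain distinct in $\AlgA'$. Thus the identity $t \approx t'$ is refuted in the proper subalgebra $\AlgA'$, contradicting the hypothesis that $t \approx t'$ is pre-true in $\AlgA$.

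For (ii) I would pass to the quotient $\AlgA/\mu(\AlgA)$. Since $\mu(\AlgA)$ is by definition a non-identity congruence, the natural surjection $\AlgA \to \AlgA/\mu(\AlgA)$ is a proper homomorphic image, so the pre-true identity $t \approx t'$ holds there. Applied to the valuation $x_i \mapsto [\alga_i]_{\mu(\AlgA)}$ this yields
\[
[t(\alga_1,\dots,\alga_n)]_{\mu(\AlgA)} = [t'(\alga_1,\dots,\alga_n)]_{\mu(\AlgA)},
\]
which is precisely the congruence assertion in (ii). No step looks like a real obstacle; the only subtlety is the implicit need to first secure subdirect irreducibility so that $\mu(\AlgA)$ is defined, and this follows by essentially the same proper-homomorphic-image argument used for (ii).
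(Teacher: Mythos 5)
Your proof is correct and is exactly the expansion of the paper's one-line justification (the paper simply says the proposition ``follows immediately from the definitions of pre-true identity and monolith''): part (i) via the proper subalgebra generated by $\alga_1,\dots,\alga_n$, part (ii) via the proper homomorphic image $\AlgA/\mu(\AlgA)$, with subdirect irreducibility secured first so that $\mu(\AlgA)$ exists (which the paper records separately as Proposition~\ref{pretr}(b)). No gaps.
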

\begin{proof} The proof follows immediately from the definitions of pre-true identity and monolith.
\end{proof}

If $\AlgA$ is an algebra by $\CProp(\AlgA)$ we denote the set of all proper subalgebras of $\AlgA$ and subalgebras of proper homomorphic images of $\AlgA$. Clearly, an identity $\idn$ is pre-true in $\AlgA$ if $\AlgA \not\models \idn$ and $\CProp(\AlgA) \models \idn$. Let us note that if an algebra $\AlgA$ has a pre-true identity then $\AlgA$ cannot be embedded in any algebra from $\CProp(\AlgA)$. By $\oV(\AlgA)$ we denote the variety generated by $\CProp(\AlgA)$. If $\idn$ is an identity by $\Sbs(\idn)$ we denote the set of all substitution instances of $\idn$, that is $\Sbs(\idn) \bydef \set{\sigma(\idn)}{\sigma \in \Sigma}$, where $\Sigma$ is a class of all uniform substitutions of terms for variables. 

Let us observe the following simple properties of pre-true identities that we will use in the future.

\begin{prop} \label{pretr} (comp. \cite[Corollary 4 ]{Jankov_1969})The following hold 
\begin{itemize}
\item[(a)] $\AlgA$ has a pre-true identity if and only if $\AlgA \notin \oV(\AlgA)$;
\item[(b)] If $\AlgA$ has a pre-true identity then $\AlgA$ is s.i.;
\item[(c)] If $\AlgA$ has a pre-true identity then $\AlgA$ is finitely generated.
\end{itemize}
\end{prop}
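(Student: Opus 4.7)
The plan is to prove the three parts in order, relying on the definition of pre-true identity and on the already established Proposition~\ref{pretrueprop1}. The main observations are that $\CProp(\AlgA)$ generates $\oV(\AlgA)$, and that the witness of a refutation of a pre-true identity must generate $\AlgA$.

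For (a), I would argue as follows. If $\idn$ is pre-true in $\AlgA$, then by definition $\idn$ holds in every algebra of $\CProp(\AlgA)$; since identities are preserved under $\CHom, \CSub, \CProd$, it holds in the whole variety $\oV(\AlgA)$, while it fails in $\AlgA$, so $\AlgA\notin\oV(\AlgA)$. Conversely, if $\AlgA\notin\oV(\AlgA)$, by the HSP-characterization of the varieties (birkhoff) there is an identity $\idn$ valid in $\oV(\AlgA)$ yet refuted in $\AlgA$. Because every proper subalgebra and every proper homomorphic image of $\AlgA$ belongs to $\CProp(\AlgA)\subseteq\oV(\AlgA)$, the identity $\idn$ is valid in them, so $\idn$ is pre-true in $\AlgA$.

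For (b), I would argue by contrapositive. Suppose $\AlgA$ is not s.i. Then the intersection of all non-identity congruences on $\AlgA$ is itself the identity, so $\AlgA$ is a subdirect product of the quotients $\AlgA/\theta_i$, each of which is a proper homomorphic image and hence lies in $\CProp(\AlgA)$. Any identity valid in all these quotients is valid on the product, and therefore on the subalgebra $\AlgA$ of that product. So no identity can fail in $\AlgA$ while holding in all of $\CProp(\AlgA)$, i.e.\ $\AlgA$ has no pre-true identity.

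Part (c) is essentially a direct consequence of Proposition~\ref{pretrueprop1}(i): if $\idn$ is pre-true in $\AlgA$ and $\ua=\alga_1,\dots,\alga_n$ refutes it, then the subalgebra $\AlgA'$ generated by $\ua$ must equal $\AlgA$, for otherwise $\AlgA'$ would be a proper subalgebra refuting $\idn$, contradicting pre-trueness. Hence $\AlgA$ is generated by the finitely many elements $\ua$.

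The only subtle point is making sure the backward direction of (a) uses the correct characterization of varieties: once one invokes that $\oV(\AlgA)=\CHom\CSub\CProd\,\CProp(\AlgA)$ and that identities are preserved by $\CHom,\CSub,\CProd$, the proof reduces to unpacking the definition of pre-true. No obstacle of any real difficulty is expected; the main care required is in (b), to note that the subdirect decomposition involves only proper homomorphic images (which is automatic since each $\theta_i$ is non-trivial), so that all factors do lie in $\CProp(\AlgA)$.
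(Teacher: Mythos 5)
Your proposal is correct and follows essentially the same route as the paper's proof: (a) via preservation of identities under $\CHom,\CSub,\CProd$ together with $\CProp(\AlgA)\subseteq\oV(\AlgA)$, (b) via the subdirect decomposition of a non-s.i.\ algebra into its proper homomorphic images, and (c) via the observation that the finitely many refuting elements must generate the whole algebra. The only cosmetic differences are that the paper phrases (b) as a contradiction routed through part (a) and (c) as a contradiction rather than a direct appeal to Proposition~\ref{pretrueprop1}(i); the substance is identical.
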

\begin{proof} 
(a) By the definition, any pre-true identity separates $\AlgA$ from $\CProp(\AlgA)$ and, hence, from $\oV(\AlgA)$.

Conversely, if $\AlgA \notin \oV(\AlgA)$ any identity that separates $\AlgA$ from $\oV(\AlgA)$ also separates $\AlgA$ from $\CProp(\AlgA)$ (for $\CProp(\AlgA) \subseteq \oV(\AlgA)$) and, hence, this identity is pre-true in $\AlgA$.

(b) By (a), $\AlgA \notin \oV(\AlgA)$. Assume for contradiction that $\AlgA$ is not s.i. Then $\AlgA$ is a subdirect product of its proper homomorphic images. Recall that every proper homomorphic image of $\AlgA$ belongs to $\CProp(\AlgA)$ and, hence, it belongs to $\oV(\AlgA)$. Thus, $\AlgA$ is a subdirect product of algebras from $\oV(\AlgA)$ and, hence, $\AlgA \in \oV(\AlgA)$ and we have arrived to contradiction.  

(c) Assume for contradiction that $\AlgA$ is not finitely generated. Let $\idn$ be an identity pre-true in $\AlgA$. Then $\AlgA \not\models \idn$. Obviously $\idn$ contains just a finite number of variables and, hence, it is refutable in a finitely generated subalgebra $\AlgA'$ of $\AlgA$. Recall that $\AlgA$ is not finitely generated. Therefore $\AlgA \not\cong  \AlgA'$. Thus, $\idn$ is invalid in a proper subalgebra $\AlgA'$ of $\AlgA$ and this contradicts that $\idn$ is a pre-true identity.
\end{proof}

Note that an algebra can have a pre-true identity and be not finitely presented. Thus, such an algebra does not have a characteristic identity at all (see Section \ref{HeytPreTrue}). On the other hand, the following holds.

\begin{prop} \label{pretrueprop2} If $\AlgA$ is an s.i. algebra finitely presented in a variety $\classV$  and $\AlgA$ is not embeddable in any member of $\CProp\AlgA$, then $\chi_{_\classV}(\AlgA)$ is pre-true in $\AlgA$. In particular, a characteristic identity of any finite s.i. algebra is pre-true in this algebra.  
\end{prop}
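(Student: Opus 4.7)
The pre-truth of $\chi_{_\classV}(\AlgA)$ in $\AlgA$ means three things: $\AlgA \not\models \chi_{_\classV}(\AlgA)$, and $\AlgB \models \chi_{_\classV}(\AlgA)$ for every proper subalgebra and every proper homomorphic image $\AlgB$ of $\AlgA$. The first assertion is precisely \eqref{selfrefut}, verified in the second half of the proof of Theorem~\ref{hom}: the valuation $\ux \mapsto \ua$ collapses the TD term through~\eqref{td1} and returns the distinct indistinguishable elements $r_1(\ua) \neq r_2(\ua)$.

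For the remaining two assertions, fix $\AlgB \in \CProp\AlgA$ and suppose for contradiction that $\AlgB \not\models \chi_{_\classV}(\AlgA)$. By Theorem~\ref{hom} there is an embedding of $\AlgA$ into some $\AlgC = \AlgB/\theta$ with $\theta \in Con(\AlgB)$. The strategy is to show that $\AlgC$ itself belongs to $\CProp\AlgA$, contradicting the hypothesis that $\AlgA$ is not embeddable in any member of $\CProp\AlgA$.

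The bridge is the congruence extension property, which every TD variety enjoys (since it has EDPC, cf.~\cite{Blk_Pgz_3}). Indeed, \eqref{adpc} yields at once, for any subalgebra $\AlgB \leq \AlgD$ and lists $\ua, \ub$ from $\algB$, the equality $\theta^{\AlgD}(\ua,\ub) \cap \algB^2 = \theta^{\AlgB}(\ua,\ub)$, because $td$ takes the same value whether computed inside $\AlgB$ or inside $\AlgD$; writing an arbitrary congruence on $\AlgB$ as a union of its compact subcongruences promotes this to full CEP. Now, if $\AlgB$ is a proper subalgebra of $\AlgA$, extend $\theta$ to $\theta'$ on $\AlgA$ with $\theta' \cap \algB^2 = \theta$; then $\AlgC$ embeds into $\AlgA/\theta'$, which is either $\AlgA$ itself (so $\AlgC \cong \AlgB$ is a proper subalgebra of $\AlgA$) or a proper quotient of $\AlgA$ (so $\AlgC$ is a subalgebra of a proper quotient). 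If instead $\AlgB$ is a subalgebra of a proper quotient $\AlgA/\sigma$, extend $\theta$ to $\theta'$ on $\AlgA/\sigma$: then $\AlgC$ embeds into $(\AlgA/\sigma)/\theta'$, a quotient of $\AlgA$ whose kernel contains the nontrivial $\sigma$ and is therefore a proper quotient. In every case $\AlgC \in \CProp\AlgA$, giving the required contradiction.

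The ``in particular'' clause is then immediate: when $\AlgA$ is finite, every member of $\CProp\AlgA$ has strictly smaller cardinality than $\AlgA$ (a proper subalgebra directly, a subalgebra of a proper quotient through the quotient), so $\AlgA$ cannot embed into any of them and the hypothesis holds automatically. I expect the only delicate point to be the CEP justification; the remaining steps are direct applications of Theorem~\ref{hom} and the definition of $\CProp$.
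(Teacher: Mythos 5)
Your proof is correct and follows essentially the same route as the paper's: self-refutation of $\chi_{_\classV}(\AlgA)$ via \eqref{td1}, then Theorem~\ref{hom} applied to a hypothetical refuting member of $\CProp\AlgA$, then the cardinality count for the finite case. The paper's own proof simply asserts that $\AlgA \in \CSub\CHom\AlgB$ with $\AlgB \in \CProp\AlgA$ forces $\AlgA$ to embed into a member of $\CProp\AlgA$; your congruence-extension argument is precisely the detail needed to make that step rigorous, so you have supplied a justification the paper leaves implicit rather than taken a different route.
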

\begin{proof} As we know, $\AlgA \not\models \chi(\AlgA)$. On the other hand, $\CProp\AlgA \models \chi(\AlgA)$. Indeed, if $\chi(\AlgA)$ is invalid in any member of $\CProp\AlgA$, then, by the Theorem \ref{hom}, $\AlgA$ would be embeddable in a member of $\CProp\AlgA$. If $\AlgA$ is finite it cannot be embedded in any algebra from $\CProp\AlgA$, because any member of $\CProp\AlgA$ has less elements than $\AlgA$.
\end{proof}

\begin{remark} One of the essential differences between finite and infinite algebras is that the latter can be isomorphic to its own proper subalgebra or its own proper homomorphic image. In order to take this into account E.~Tomaszewski introduced \cite[Definition 3.11]{Tomaszewski_PhD} a notion of a strictly pre-true identity: an identity $\idn$ is \textit{strictly pre-true} in an algebra $\AlgA$ if $\idn$ is not valid in $\AlgA$ but it is valid in every proper subvariety of $\classV(\AlgA)$. It is easily seen that an algebra $\AlgA$ has a strictly pre-trrue identity if and only if $\AlgA$ splits the variety $\eqc(\AlgA)$ and the strictly pre-true identity defines $\oV(\AlgA)$. In the congruence distributive variety the classes of pre-true and strongly pre-true identities of finite algebras  coincide. Nevertheless, there are infinite algebras that have a strictly pre-true identity and do not have any pre-true identities simply because these algebras are own proper subalgebras.
\end{remark}

\subsubsection{Uniquely Pre-True Identities}

The goal of this section is to prove the Theorem\ref{charpretrue} that describes the relation between characteristic and pre-true identities in the hereditarily finitely approximated varieties. 

We start with an observation that the same identity may be pre-true in different algebras. Moreover, in \cite{Kuznetsov_Gerchiu, Wronski_1973} the reader can find the examples of scattered formulas, that is the formulas that are pre-true in the infinitely many algebras. This observation justifies the following definition: we say that an identity $\idn$ is \textit{uniquely pre-true in a variety} $\classV$ if $\idn$ is pre-true in exactly one algebra from $\classV$. 

\begin{theorem} \label{charpretrue} Suppose $\classV$ is a hereditarily finitely approximated variety with a TD term and $\idn$ is an identity. Then the following are equivalent:
\begin{itemize}
\item[(a)] $\idn$ is ($\classV$-equivalent with) a characteristic identity of some $\classV$-algebra ;
\item[(b)] $\idn$ is uniquely pre-true in $\classV$.
\end{itemize}  
\end{theorem}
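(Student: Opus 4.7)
The plan is to connect pre-truth of $\idn$ to the finite algebra presenting a characteristic identity via Theorem~\ref{hom} and Proposition~\ref{pretrueprop2}, and to use hereditary finite approximability to force the uniquely-pre-true algebra of~(b) to be finite.

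For $(a)\Rightarrow(b)$, suppose $\idn$ is $\classV$-equivalent to $\chi_{_\classV}(\AlgA)$ for some s.i.\ algebra $\AlgA$ finitely presented in $\classV$. The TD term gives EDPC, and $\classV$ is finitely approximated, so by the remark following Theorem~\ref{thrmord} the algebra $\AlgA$ is forced to be finite. Proposition~\ref{pretrueprop2} then makes $\chi_{_\classV}(\AlgA)$, and hence $\idn$, pre-true in $\AlgA$. For uniqueness, I would assume $\idn$ is also pre-true in some $\AlgB$ not isomorphic to $\AlgA$. Then $\AlgB \not\models \chi_{_\classV}(\AlgA)$, so Theorem~\ref{hom} places $\AlgA \in \CSub\CHom\AlgB$; since $\AlgA \ncong \AlgB$, this forces $\AlgA \in \CProp(\AlgB)$ (either as a proper subalgebra, or as a subalgebra of a proper homomorphic image), and pre-truth of $\idn$ in $\AlgB$ then gives $\AlgA \models \idn$, contradicting $\AlgA \not\models \idn$.

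For $(b)\Rightarrow(a)$, let $\AlgA$ be the unique member of $\classV$ on which $\idn$ is pre-true. The main obstacle is showing that $\AlgA$ is finite; this is where hereditary f.a.\ really enters. Since $\classV$ is finitely approximated, I pick a finite $\AlgB_0 \in \classV$ refuting $\idn$. So long as $\idn$ fails to be pre-true on the current $\AlgB_k$, I select $\AlgB_{k+1} \in \CProp(\AlgB_k)$ with $\AlgB_{k+1} \not\models \idn$; because both proper subalgebras and proper homomorphic images of a finite algebra have strictly smaller cardinality, $|\AlgB_{k+1}| < |\AlgB_k|$ and the chain must terminate. The terminal algebra is finite and pre-true for $\idn$, so by uniqueness it is isomorphic to $\AlgA$. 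Thus $\AlgA$ is finite, hence finitely presented in $\classV$; by Proposition~\ref{pretrueprop2}, $\chi_{_\classV}(\AlgA)$ is pre-true in $\AlgA$.

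It then remains to show that $\idn$ and $\chi_{_\classV}(\AlgA)$ cut out the same subvariety of $\classV$. One inclusion is immediate: if $\AlgB \models \idn$, then every algebra in $\CSub\CHom\AlgB$ satisfies $\idn$, so $\AlgA \notin \CSub\CHom\AlgB$ and Theorem~\ref{hom} gives $\AlgB \models \chi_{_\classV}(\AlgA)$. For the converse, suppose some $\AlgB$ validating $\chi_{_\classV}(\AlgA)$ refutes $\idn$. Hereditary f.a.\ applied to the subvariety $\classV_\chi$ cut out by $\chi_{_\classV}(\AlgA)$ (a variety, hence closed under $\CProp$-factors) yields a finite refuter of $\idn$ inside $\classV_\chi$; repeating the descending argument within $\classV_\chi$ produces $\AlgA$ as a $\CSub\CHom$-factor of this finite algebra, contradicting $\AlgB \models \chi_{_\classV}(\AlgA)$ via Theorem~\ref{hom}. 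The whole argument hinges on the finiteness step; the rest is Theorem~\ref{hom} and the definition of pre-truth.
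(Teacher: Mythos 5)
Your proof is correct and follows essentially the same route as the paper's: finiteness of the relevant algebra via finite approximability (plus EDPC for direction (a)$\Rightarrow$(b)), uniqueness of pre-truth via Theorem \ref{hom}, and for $\chi(\AlgA)\vDash_\classV\idn$ the use of hereditary finite approximability to locate a finite pre-true algebra inside a subvariety validating $\chi(\AlgA)$ but refuting $\idn$, which by uniqueness must be $\AlgA$ itself, yielding the contradiction. The only differences are cosmetic — you inline the auxiliary facts (Propositions \ref{ucharpretrue} and \ref{fapretrue}, Corollary \ref{fauniquely}) and work in the subvariety defined by $\chi(\AlgA)$ rather than the one generated by the offending algebra $\AlgB$.
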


In order to prove the theorem first we need to establish some properties of uniquely pre-true identities.

We start with the observation that the characteristic identities of finite s.i. algebras are the obvious examples of uniquely pre-true identities.

\begin{prop} \label{ucharpretrue} Any characteristic identity of a finite algebra $\AlgA$ is uniquely pre-true in $\AlgA$.
\end{prop}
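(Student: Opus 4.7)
The plan is to combine Proposition \ref{pretrueprop2} (which supplies pre-trueness in $\AlgA$ itself) with Theorem \ref{hom} (which controls where $\chi(\AlgA)$ can be refuted) to rule out pre-trueness in any other algebra.

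First I would record the easy half: $\chi(\AlgA)$ is pre-true in $\AlgA$. Since $\AlgA$ is finite, it is finitely presented; and since every member of $\CProp(\AlgA)$ is either a proper subalgebra of $\AlgA$ or a subalgebra of a proper homomorphic image of $\AlgA$, every such algebra has strictly fewer elements than $\AlgA$, so $\AlgA$ cannot be embedded into any of them. Hence Proposition \ref{pretrueprop2} applies and $\chi(\AlgA)$ is pre-true in $\AlgA$.

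For uniqueness, suppose $\chi(\AlgA)$ is also pre-true in some algebra $\AlgB$. I would argue $\AlgB \cong \AlgA$ as follows. Since $\AlgB \not\models \chi(\AlgA)$, Theorem \ref{hom} yields a congruence $\theta \in Con(\AlgB)$ and an embedding of $\AlgA$ into $\AlgB/\theta$; let $\AlgA'$ be the image, so $\AlgA' \cong \AlgA$ and $\AlgA' \subseteq \AlgB/\theta$. By \eqref{selfrefut} (or Theorem \ref{hom} applied to $\AlgA$ itself) $\AlgA \not\models \chi(\AlgA)$, hence $\AlgA' \not\models \chi(\AlgA)$ as well.

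Now I split on whether $\theta$ is the identity congruence. If $\theta$ is not the identity, then $\AlgB/\theta$ is a proper homomorphic image of $\AlgB$; by pre-trueness of $\chi(\AlgA)$ in $\AlgB$ we have $\AlgB/\theta \models \chi(\AlgA)$, and since identities are inherited by subalgebras also $\AlgA' \models \chi(\AlgA)$, contradicting the previous paragraph. If $\theta$ is the identity, then $\AlgA'$ is (up to the canonical identification) a subalgebra of $\AlgB$; if it were proper, pre-trueness in $\AlgB$ would give $\AlgA' \models \chi(\AlgA)$, again a contradiction. Hence $\AlgA' = \AlgB$, i.e.\ $\AlgB \cong \AlgA$, which is exactly what uniqueness requires.

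There is no real obstacle here: once Theorem \ref{hom} is in hand, the only thing to be careful about is that the definition of pre-true explicitly covers proper subalgebras and proper homomorphic images but not their subalgebras, so one must silently invoke the trivial fact that validity of an identity is inherited by subalgebras in order to reach the members of $\CProp(\AlgB)$ that actually appear in the $\CSub\CHom$ decomposition produced by Theorem \ref{hom}.
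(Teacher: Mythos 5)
Your proposal is correct and follows essentially the same route as the paper: pre-trueness in $\AlgA$ via the cardinality/embedding obstruction (which is exactly what Proposition \ref{pretrueprop2} packages), and uniqueness by applying Theorem \ref{hom} to a hypothetical second algebra $\AlgB$ and observing that the resulting copy of $\AlgA$ inside $\CSub\CHom\AlgB$ would have to satisfy $\chi(\AlgA)$ unless $\AlgB\cong\AlgA$. Your version merely makes explicit the case split on the congruence and the (correct) remark that validity passes to subalgebras, both of which the paper leaves implicit.
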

\begin{proof} Let $\chi(\AlgA)$ be a characteristic identity of a finite s.i. algebra $\AlgA$. First we observe that $\chi(\AlgA)$ is pre-true in $\AlgA$. For contradiction: assume that $\chi(\AlgA)$ is refutable in some algebra $\AlgB$  that is a proper subalgebra or a proper homomorphic image of $\AlgA$. Then
\begin{equation}
|\AlgB| < |\AlgA|. \label{powers}
\end{equation}
On the other hand, by the  properties of characteristic identities, $\AlgA \in \CSub\CHom(\AlgB)$, hence
\[
|\AlgA| \leq |\AlgB|
\]
and the latter contradicts \eqref{powers}.

Next, we prove that there is no other algebra in which $\chi(\AlgA)$ is pre-true. Indeed, since $\chi(\AlgA)$ is a characteristic identity, algebra $\AlgA$ is embeddable in a homomorphic image of any algebra $\AlgB$ in which $\chi(\AlgA)$ is refutable. Hence, $\chi(A)$ cannot be pre-true in $\AlgB$, unless $\AlgB$ is isomorphic with $\AlgA$.
\end{proof}

\begin{prop}\label{fapretrue} If $\classV$ is a finitely approximated variety, $\idn$  is an identity and $\classV \not\models \idn$, then $\idn$ is pre-true in some finite algebra from $\classV$.
\end{prop}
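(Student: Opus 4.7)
The plan is to mimic, in the algebraic setting, the argument given for Proposition~\ref{pre-true}: pick a \emph{minimal} finite refuting algebra and observe that minimality forces the identity to hold on all proper subalgebras and proper homomorphic images.

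More precisely, since $\classV$ is finitely approximated, the definition of $\classV = \classV^\circ$ gives that an identity holds in $\classV$ iff it holds in every finite member of $\classV$. From $\classV \not\models \idn$ I therefore obtain at least one finite $\AlgA \in \classV$ with $\AlgA \not\models \idn$. Let
\[
\classC \bydef \set{\AlgA \in \classV}{\AlgA \text{ is finite and } \AlgA \not\models \idn}.
\]
The set $\classC$ is nonempty, so I can pick $\AlgA \in \classC$ of minimum cardinality $|\AlgA|$.

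I claim $\idn$ is pre-true in this $\AlgA$. By construction $\AlgA \not\models \idn$. If $\AlgA'$ is a proper subalgebra of $\AlgA$, then $\AlgA' \in \classV$ (varieties are closed under subalgebras), $\AlgA'$ is finite, and $|\AlgA'| < |\AlgA|$; so $\AlgA' \notin \classC$, which forces $\AlgA' \models \idn$. Similarly, if $\AlgB$ is a proper homomorphic image of $\AlgA$ (i.e.\ $\AlgB \cong \AlgA/\theta$ for some $\theta \in Con'(\AlgA)$), then $\AlgB \in \classV$, $\AlgB$ is finite, and $|\AlgB| < |\AlgA|$; hence $\AlgB \models \idn$. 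Thus $\idn$ holds in every proper subalgebra and every proper homomorphic image of $\AlgA$, which is exactly the definition of being pre-true in $\AlgA$.

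There is no real obstacle here: the argument uses only that $\classV$ is closed under $\CSub$ and $\CHom$, that finite algebras have strictly smaller finite proper subalgebras and proper homomorphic images, and the hypothesis of finite approximability to guarantee that $\classC$ is nonempty. The proof can essentially be a one-sentence reference back to the minimality trick of Proposition~\ref{pre-true}.
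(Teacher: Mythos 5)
Your proposal is correct and follows exactly the paper's own argument: finite approximability yields a finite refuting algebra, and taking one of minimal cardinality forces $\idn$ to hold in all proper subalgebras and proper homomorphic images, hence $\idn$ is pre-true there. The paper's proof is just a compressed version of the same minimality trick.
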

\begin{proof} Since $\classV$ is finitely approximated it is generated by its finite members. Hence, there is a finite algebra from $\classV$ in which $\idn$ is refutable. Let $\AlgA$ be a smallest by number of elements algebra from $\classV$ in which $\idn$ is refutable. Clearly, $\idn$ is pre-true in $\AlgA$.
\end{proof}

\begin{cor} \label{fauniquely} Let $\classV$ be a finitely approximated variety and an identity $\idn$ is uniquely pre-true in $\classV$. Then $\idn$ is uniquely pre-true in some finite algebra from $\classV$.
\end{cor}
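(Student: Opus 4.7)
The plan is to combine the hypothesis of unique pre-truth with the finite approximation property via Proposition \ref{fapretrue}. First I would observe that since $\idn$ is uniquely pre-true in $\classV$, there is at least one algebra $\AlgB \in \classV$ in which $\idn$ is pre-true, and in particular $\AlgB \not\models \idn$, so $\classV \not\models \idn$.

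Next I would invoke Proposition \ref{fapretrue}: because $\classV$ is finitely approximated and $\classV \not\models \idn$, there exists a \emph{finite} algebra $\AlgA \in \classV$ in which $\idn$ is pre-true. Now the uniqueness hypothesis closes the argument: since $\idn$ is pre-true in only one algebra of $\classV$, the algebras $\AlgA$ and $\AlgB$ must coincide (up to isomorphism). Hence the unique algebra in which $\idn$ is pre-true is in fact finite, which is exactly the conclusion.

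I do not foresee any genuine obstacle here; the statement is essentially a bookkeeping consequence of combining the previous proposition with the uniqueness assumption. The only thing to be careful about is the phrasing ``uniquely pre-true in some finite algebra from $\classV$'' — it should be read as ``pre-true in a (necessarily unique) finite algebra,'' and the extraction of finiteness is precisely what Proposition \ref{fapretrue} supplies.
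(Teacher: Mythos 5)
Your argument is correct and is exactly the intended one: the paper states this as an immediate corollary of Proposition \ref{fapretrue} without spelling out a proof, and the route you take (unique pre-truth gives $\classV \not\models \idn$, the proposition supplies a finite algebra in which $\idn$ is pre-true, and uniqueness forces the unique pre-true witness to be that finite algebra) is precisely the reasoning being left to the reader.
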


\begin{prop} \label{hfauniq} Let $\classV$ be a hereditarily finitely approximated variety with a TD term and $\idn$ be an identity. Then the following are equivalent:
\begin{itemize}
\item[(a)] $\idn$ is $\classV$-uniquely pre-true;
\item[(b)] $\idn$ is pre-true in exactly one (modulo isomorphism) finite $\classV$-algebra.
\end{itemize}
\end{prop}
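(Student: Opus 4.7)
The plan is to prove the two implications separately. Implication (a)$\Rightarrow$(b) follows almost immediately from Corollary~\ref{fauniquely}, so the real work is concentrated in (b)$\Rightarrow$(a), which I will carry out by contradiction using the characteristic identity of the unique finite pre-true algebra together with Theorem~\ref{hom}.

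For (a)$\Rightarrow$(b): since $\classV$ is hereditarily finitely approximated it is in particular f.a., so Corollary~\ref{fauniquely} applies and says that the unique pre-true algebra guaranteed by (a) is finite. Being the only algebra of $\classV$ in which $\idn$ is pre-true, it is \emph{a fortiori} the only finite one up to isomorphism, which is (b).

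For (b)$\Rightarrow$(a): let $\AlgA$ denote the unique (modulo isomorphism) finite $\classV$-algebra in which $\idn$ is pre-true, and suppose for contradiction that some $\AlgB \in \classV$ with $\AlgB \not\cong \AlgA$ also has $\idn$ pre-true in it; by (b) this $\AlgB$ must then be infinite. The first move is to descend to the subvariety $\classV(\AlgB)$: by the hereditary hypothesis it is again f.a., and since $\AlgB \not\models \idn$ Proposition~\ref{fapretrue} furnishes a finite algebra of $\classV(\AlgB)$ in which $\idn$ is pre-true. Uniqueness in (b) forces this finite algebra to be isomorphic to $\AlgA$, so $\AlgA \in \classV(\AlgB)$.

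Now I bring in the characteristic identity. Because $\AlgA$ is finite and s.i.\ (by Proposition~\ref{pretr}(b)), it is finitely presented in $\classV$ and has a characteristic identity $\chi_{_\classV}(\AlgA)$, which $\AlgA$ refutes. Since $\AlgA \in \classV(\AlgB)$, the identity $\chi_{_\classV}(\AlgA)$ cannot hold throughout $\classV(\AlgB)$, and hence $\AlgB \not\models \chi_{_\classV}(\AlgA)$; Theorem~\ref{hom} then places $\AlgA$ in $\CSub\CHom\AlgB$, i.e.\ there is a congruence $\theta$ on $\AlgB$ and an embedding of $\AlgA$ into $\AlgB/\theta$. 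The \textbf{main obstacle} I anticipate is the final case split that closes the argument: if $\theta$ is the identity congruence then $\AlgA$ embeds in $\AlgB$ as a proper subalgebra (proper because $|\AlgA|<|\AlgB|$), whereas if $\theta$ is nontrivial then $\AlgB/\theta$ is a proper homomorphic image of $\AlgB$. Either way the pre-truth of $\idn$ in $\AlgB$ forces $\idn$ to hold on the embedded copy of $\AlgA$, contradicting $\AlgA \not\models \idn$ and completing the proof.
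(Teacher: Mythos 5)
Your proof is correct, and it uses exactly the same ingredients as the paper's (Corollary~\ref{fauniquely} for (a)$\Rightarrow$(b); Proposition~\ref{fapretrue}, the characteristic identity $\chi_{_\classV}(\AlgA)$, Theorem~\ref{hom}, and the hereditary f.a.\ hypothesis applied to $\eqc(\AlgB)$ for the converse), but it arranges them in the reverse order. The paper first shows $\AlgB \models \chi_{_\classV}(\AlgA)$ (arguing that otherwise Theorem~\ref{hom} would put $\AlgA$ into $\CSub\CHom\AlgB$ and hence violate pre-truth of $\idn$ in $\AlgB$), concludes $\AlgA \notin \eqc(\AlgB)$, and only then invokes Proposition~\ref{fapretrue} to manufacture a \emph{second} finite pre-true algebra in $\eqc(\AlgB)$, contradicting (b) at the very last step. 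You instead apply Proposition~\ref{fapretrue} first and spend the uniqueness hypothesis immediately to force $\AlgA \in \eqc(\AlgB)$, then run the characteristic-identity argument forwards to land $\AlgA$ inside $\CProp(\AlgB)$ and contradict pre-truth of $\idn$ in $\AlgB$. In effect the two proofs resolve the dichotomy ``$\AlgA \in \eqc(\AlgB)$ or not'' in opposite directions. Your route costs you one extra (easy but necessary) observation --- the case split on whether the congruence $\theta$ witnessing $\AlgA \in \CSub\CHom\AlgB$ is trivial, using $|\AlgA| < |\AlgB|$ to get a \emph{proper} subalgebra --- which you handle correctly and which the paper's own first step actually also needs but glosses over; in that sense your write-up is slightly more careful on this point, at the price of explicitly splitting into the finite/infinite cases for $\AlgB$, which the paper's arrangement avoids entirely.
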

\begin{proof}
(a) $\dimpl$ (b) immediately follows from the Corollary \ref{fauniquely}.

(b) $\dimpl$ (a). Let $\idn$ is pre-true in exactly one finite algebra $\AlgA$. We need to verify that there is no algebra $\AlgB \in \classV$ in which $\idn$ is pre-true. 

 For the contradiction assume that $\idn$ is pre-true in $\AlgB \in \classV$. First, observe that since $\AlgA$  has a pre-true formula $\AlgA$ is s.i. algebra. By the assumption, $\AlgA$ is finite. Let $\chi(\AlgA)$ be a characteristic identity. Next, we observe that 
\[
\AlgB \models \chi(\AlgA),
\] 
because otherwise, by the property of characteristic identity, $\AlgA \in \CSub\CHom(\AlgB)$ and the latter would contradict that $\idn$ is pre-true in $\AlgB$.

Now, let us consider $\classV' \bydef \eqc(\AlgB)$. Note that $\classV' \models \chi(\AlgA)$. On the other hand, $\AlgA \not\models \chi(\AlgA)$, hence,
\begin{equation}
\AlgA \notin \classV'. \label{anotinv}
\end{equation}

It is also easy to see that 
\[
\classV' \not\models \idn.
\] 
Recall that $\classV'$ is finitely approximated, for $\classV' \subseteq \classV$ and $\classV$ is hereditarily finitely approximated. By virtue of the Proposition \ref{fapretrue}, there is a finite algebra $\AlgB' \in \classV'$ in which $\idn$ is pre-true. In view of \eqref{anotinv}, $\AlgA \not\cong \AlgB'$.  
\end{proof}

\subsubsection{The Proof of the Theorem}

\begin{proof}
(a) $\Rightarrow$ (b). If $\classV$ is finitely approximated, then every finitely presented in $\classV$ s.i. algebra is finite. Thus, if $\idn$ is a characteristic identity of an algebra $\AlgA$, then $\AlgA$ is finite. By virtue of the Proposition \ref{ucharpretrue}, any characteristic identity $\chi(\AlgA)$ is uniquely pre-true in $\AlgA$.  

(b) $\Rightarrow$ (a). Assume $\idn$ is uniquely pre-true in $\classV$. Then, by virtue of the Corollary \ref{fauniquely}, $\idn$ is uniquely pre-true in some finite algebra $\AlgA \in \classV$. Let us prove that $\idn$ and $\chi(\AlgA)$ are $\classV$-equivalent.

Since $\AlgA \not\models \idn$, by the property of characteristic identities, we have 
\[
\idn \models \chi(\AlgA).
\]
So, we need to prove only 
\[
\chi(\AlgA) \models \idn,
\]
i.e. we need to prove that $\idn$ is valid in any algebra $\AlgB \in \classV$ in which $\chi(\AlgA)$ is valid. Assume the contrary: there is such an algebra $\AlgB$ that 
\begin{equation}
\AlgB \models \chi(\AlgA) \text{, while } \AlgB \not\models \idn. \label{notder}
\end{equation} 
Let us consider a subvariety $\classV' \subseteq \classV$ generated by $\AlgB$.  Then
\begin{equation}
\classV' \models \chi(\AlgA) \label{chartrue}
\end{equation}
and
\begin{equation}
\classV' \not\models \idn. \label{pretruenot}
\end{equation}

 Since $\classV$ is hereditarily finitely approximated, the variety $\classV'$ is finitely approximated.  From \eqref{pretruenot}, by virtue of the Proposition \ref{fapretrue}, there is an algebra $\AlgC \in \classV'$ in which $\idn$ is pre-true. Note that $\classV' \subseteq \classV$ and, therefore, $\AlgC \in \classV$. Recall that $\idn$ is $\classV$-uniquely pre-true in $\AlgA$, hence $\AlgC \cong \AlgA$. Thus, \[
\AlgA \in \classV'
\]
and, by the properties of characteristic identity,
\[
\AlgA \not\models \chi(\AlgA),
\]
that is
\[
\classV' \not\models \chi(\AlgA).
\]   
The latter obviously contradicts \eqref{chartrue}. 
\end{proof}

The Theorem \ref{charpretrue} gives us a way to construct an algorithm that recognizes whether a given identity is $\land$-prime in a hereditarily finitely approximated variety.

\begin{theorem} Let $\classV$ be a finitely axiomatized hereditarily finitely approximated variety with a TD term. Then there is and algorithm that given a list of axioms defining $\classV$ and an identity $\idn$, decides whether $\idn$ is $\land$-prime. 
\end{theorem}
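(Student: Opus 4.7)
\textbf{The plan} is to reduce $\land$-primality to an effectively decidable property via the characterization of $\land$-prime identities as (equipotents of) characteristic identities, and then give a terminating procedure that enumerates finite algebras. The key characterization is: in a hereditarily finitely approximated variety $\classV$ with a TD term, an identity $\idn$ with $\classV \not\models \idn$ is $\land$-prime if and only if $\idn$ is $\classV$-equipotent with a characteristic identity of some finite s.i.\ algebra in $\classV$ (equivalently, by Theorem~\ref{charpretrue}, if and only if $\idn$ is uniquely pre-true). One direction follows from Corollary~\ref{meet} (characteristic identities are $\land$-prime) together with the fact that $\classV$-equipotence preserves $\land$-primality; the other direction is the algebraic analogue of the IPC argument at the end of Section~\ref{Jankovprop}: starting from a finite s.i.\ algebra $\AlgA$ in which $\idn$ is pre-true (which exists by Proposition~\ref{pretr} and finite approximation), one enumerates the refuting valuations, uses the TD term and equation~\eqref{adpc} to realize the statement ``conjunction of characteristic identities of $\AlgA$ implies $\idn$'' as a single identity, and then applies $\land$-primality to force $\idn$ to be equipotent with one of the characteristic identities of $\AlgA$. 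The case $\classV \models \idn$ is handled separately: taking $\classI = \emptyset$ in the definition shows that such $\idn$ fail $\land$-primality.

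Next, I would verify the decidability of the auxiliary checks. Because $\classV$ is finitely axiomatized and f.a., its equational theory is decidable. Adjoining any identity $\idn_1$ as a new axiom yields a subvariety $\classV[\idn_1]$ that is finitely axiomatized (one additional axiom) and, by hereditary f.a., also finitely approximated, hence equationally decidable; this yields decidability of $\idn_1 \vDash_\classV \idn_2$ and of equipotence $\idn_1 \approx_\classV \idn_2$. Membership of a given finite algebra in $\classV$ is decidable (verify the axioms on finitely many tuples), and pre-trueness of $\idn$ in a finite $\AlgA$ reduces to finitely many identity checks in $\AlgA$ and in its proper subalgebras and quotients.

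The algorithm now proceeds as follows. Given $\idn$, first decide whether $\classV \models \idn$; if so, output ``not $\land$-prime''. Otherwise enumerate finite algebras by cardinality, retain those in $\classV$, and for each test pre-trueness of $\idn$; by finite approximation a pre-true witness $\AlgA$ is found in finitely many steps. Compute the characteristic identity $\chi_{_\classV}(\AlgA)$ from the full diagram of $\AlgA$ and a pair of indistinguishable elements of its monolith, then decide the equipotence $\idn \approx_\classV \chi_{_\classV}(\AlgA)$ and output ``$\land$-prime'' or ``not $\land$-prime'' accordingly. Correctness uses the characterization: if $\idn$ is $\land$-prime, then by Proposition~\ref{hfauniq} the pre-true witness $\AlgA$ is unique, $\idn \approx_\classV \chi_{_\classV}(\AlgA)$ holds, and the test succeeds; otherwise $\idn$ is equipotent to no characteristic identity and in particular not to $\chi_{_\classV}(\AlgA)$, so the test fails on the first pre-true witness encountered.

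The principal obstacle is the reverse implication in the characterization---the algebraic version of the IPC argument in Section~\ref{Jankovprop}. The crucial technical device is the internalization of a conjunction of finitely many identities as a single identity, which is available here thanks to the iterated TD term and the compact congruence formula~\eqref{adpc}; without this, the $\land$-primality hypothesis cannot be applied to a finite family of characteristic identities of $\AlgA$. A secondary concern is organizing the enumeration of finite $\classV$-algebras and the equipotence tests so that termination is transparent: termination is guaranteed by finite approximation (a finite pre-true witness is found in finite time) together with the uniqueness of that witness in the $\land$-prime case.
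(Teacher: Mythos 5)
Your proposal is correct and follows the same skeleton as the paper's proof: dispose of the case $\classV \models \idn$ by the definition of $\land$-primality, then use the characterization of $\land$-prime identities as those equipotent with a characteristic identity (equivalently, by Theorem \ref{charpretrue}, the uniquely pre-true ones), locate a finite pre-true witness $\AlgA$ by listing finite $\classV$-algebras in order of increasing power, and test $\idn$ against $\chi_{_\classV}(\AlgA)$. The one genuine difference lies in how the final test is made effective. The paper dovetails two semi-decision procedures: it tries to derive $\idn$ from $\chi_{_\classV}(\AlgA)$ while simultaneously searching for a second finite algebra in which $\idn$ is pre-true, and invokes Proposition \ref{hfauniq} to guarantee that one of the two processes halts. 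You instead observe that the subvariety of $\classV$ obtained by adjoining any identity $\idn_1$ as an extra axiom is finitely axiomatized and, by hereditary finite approximability, finitely approximated, hence equationally decidable; this turns $\idn_1 \vDash_\classV \idn_2$, and therefore the equipotence test $\idn \approx_\classV \chi_{_\classV}(\AlgA)$, into an outright decision procedure, so no parallel search and no appeal to Proposition \ref{hfauniq} are needed for termination. That is a small but real simplification. You also rightly flag that the reverse implication of the characterization ($\land$-prime implies equipotent with a characteristic identity) is the technical crux, which the paper asserts without an algebraic proof; your sketch, internalizing a finite conjunction of characteristic identities by means of the iterated TD term and \eqref{adpc}, is one workable route, and an alternative is to apply Theorem \ref{critfa} to the subvariety defined by $\idn$ and then use $\land$-primality to extract a single characteristic identity from the axiomatizing family, with Corollary \ref{genrefute} supplying the converse consequence.
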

\begin{proof} First, let us note that for any identity $\idn$ if $\classV \models \idn$, then, by the definition,  $\idn$ is not $\land$-prime. If $\classV \not\models \idn$, then, due to finite approximability of $\classV$, $\idn$ is $\land$-prime if and only if it id equipotent with a characteristic identity. Thus, the first step of the algorithm would br to determine whether $\classV \models \idn$. If not, then to determin whether $\idn$ is equipotent with a characteristic identity.

The sketch of the proof. In order to determine whether a given identity $\idn$ is derivable we can use a slight modification of the Harrop's algorithm: on one hand, we try to derive $\idn$ from the axioms, on the other hand, we list the finite algebras from $\classV$ in order of increasing power. If we derived $\idn$ from the axioms, it is not $\land$-prime. If $\idn$ does not follow from the axioms, the finite approximability of $\classV$ yields that we will find a finite algebra in which $\idn$ is refutable. Recall that we are listing algebras in the order of increasing power. Assume $\AlgA$ is the first algebra that  refutes $\idn$ that we found. It is easy to see that $\idn$ is pre-true in $\AlgA$. Hence, we can use the Theorem \ref{charpretrue} and try, on one hand, to derive (in the corresponding equational system) $\idn$ from $\chi(\AlgA)$, and, on the other hand, to find another finite algebra in which $\idn$ is pre-true. If $\idn$ is not uniquely pre-true, by virtue of the Proposition \ref{hfauniq}, we will find another finite algebra in which $\idn$ is pre-true.     
\end{proof}

\section{Characteristic Identities of Infinite Algebras} \label{inf}

V.~Jankov defined the characteristic formulas using diagram of a finite s.i. algebra. It is natural to ask whether one can define the formulas with the similar properties for infinite s.i. algebras. In this section we will review different possibilities. 

One of the approaches is to extend the definition of characteristic formula to finitely presented algebras. But in finitely approximated varieties every finitely presented s.i. algebra is finite. 

Another way to generalization was suggested by A.~Wronski in \cite{Wronski_Card_1974} (more recently the same approach for Johansson's algebras was used by S.~Odintsov cf., for instance, \cite[Section 7]{Odintsov_Structure_2005} or \cite[Section 6.3]{Odintsov_Lattice_2006}): instead of a formula one can use a consequence relation defined by an algebra. More precisely, each algebra $\AlgA$ defines a consequence relation in the following way: a formula $A$ (an identity $\idn$) is a \textit{consequence of a set of formulas (identities)} $\Gamma$ if every valuation that refutes $A$ (respectively $\idn$) refutes at least one formula (identity) from $\Gamma$, in written $\Gamma \models_\AlgA A$ (or $\Gamma \models_\AlgA \idn$. If we take any countable s.i. algebra $\AlgA$ and take its diagram
\[
\delta^+(\AlgA) \bydef \set{f(x_{\alga_1},\dots,x_{\alga_n}) \approx x_{f(\alga_1,\dots,\alga_n)}}{ \alga_1,\dots,\alga_n \in \algA \text{ and } f \in \Con  }
\]   
we can prove the following Proposition.

\begin{prop} \label{infimpl} (comp. \cite[Lemma 3]{Wronski_Card_1974}) Let $\AlgA$ be at most countable s.i. algebra and $\AlgB$ be an algebra from a variety $\classV$. Then the following conditions are equivalent
\begin{itemize}
\item[(a)] $\AlgA$ is embeddable in $\AlgB$;
\item[(b)] $\delta^+(\AlgA) \not\models_\AlgB x_{\algb_1} \approx x_{\algb_2}$ where $\algb_1,\algb_2$ are any two distinct elements from the monolith $\mu(\AlgA)$.
\end{itemize}
\end{prop}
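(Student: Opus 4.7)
The plan is to establish the standard bijective correspondence between valuations $\nu$ of the variables $\{x_{\alga} : \alga \in \algA\}$ in $\AlgB$ satisfying every identity of $\delta^+(\AlgA)$ on the one hand, and homomorphisms $\phi \colon \AlgA \to \AlgB$ on the other. The countability assumption on $\AlgA$ guarantees that the family of variables $\{x_{\alga} : \alga \in \algA\}$ fits inside the denumerable variable set $\Vars$, so the diagram $\delta^+(\AlgA)$ is a legitimate set of identities.

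For $(b) \Rightarrow (a)$, suppose $\nu$ is a valuation witnessing $\delta^+(\AlgA) \not\models_\AlgB x_{\algb_1} \approx x_{\algb_2}$, i.e. $\nu$ satisfies every identity of $\delta^+(\AlgA)$ in $\AlgB$ while $\nu(x_{\algb_1}) \neq \nu(x_{\algb_2})$. I would define $\phi \colon \AlgA \to \AlgB$ by $\phi(\alga) := \nu(x_{\alga})$. The validity of each diagram identity $f(x_{\alga_1}, \dots, x_{\alga_n}) \approx x_{f(\alga_1, \dots, \alga_n)}$ under $\nu$ is exactly the statement that $\phi$ commutes with $f$, so $\phi$ is a homomorphism. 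Since $(\algb_1, \algb_2)$ is a pair of distinct elements belonging to the monolith $\mu(\AlgA)$, they are indistinguishable in the sense of the earlier section; and because $\phi(\algb_1) \neq \phi(\algb_2)$, Corollary \ref{monolith} forces $\phi$ to be an embedding.

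For $(a) \Rightarrow (b)$, given an embedding $\phi \colon \AlgA \hookrightarrow \AlgB$, I would run the construction in reverse: set $\nu(x_{\alga}) := \phi(\alga)$. Being a homomorphism, $\phi$ makes $\nu$ satisfy every identity in $\delta^+(\AlgA)$; being injective and $\algb_1 \neq \algb_2$, it forces $\nu(x_{\algb_1}) \neq \nu(x_{\algb_2})$, so $\nu$ refutes $x_{\algb_1} \approx x_{\algb_2}$ in $\AlgB$ while satisfying the diagram.

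The only genuinely nontrivial step is the injectivity argument in $(b) \Rightarrow (a)$: from a single separated pair one must conclude that the whole homomorphism $\phi$ is one-to-one. This is precisely where the s.i.\ hypothesis on $\AlgA$ together with the fact that $\algb_1, \algb_2$ lie in the monolith is exploited via Corollary \ref{monolith}. Everything else amounts to unwinding the definitions of $\delta^+(\AlgA)$ and of $\models_\AlgB$, with no computation beyond checking that the operation-preservation clauses of the diagram translate term by term into the homomorphism condition.
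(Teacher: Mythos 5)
Your proposal is correct and follows essentially the same route as the paper: both directions are handled by the valuation--homomorphism correspondence induced by the diagram, and the injectivity in (b) $\Rightarrow$ (a) is obtained exactly as in the paper, from the fact that $\phi$ separates a pair of indistinguishable elements of the monolith (Corollary \ref{monolith}).
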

\begin{proof} Assume $\phi: \AlgA \to \AlgB$ is an embedding. Then the valuation $\nu: x_\alga \mapsto \phi(\alga); \alga \in \algA$ makes all the identities from the diagram true while 
\[ 
\nu(x_{\algb_1}) = \phi(\algb_1) \neq \phi(\algb_2) = \nu(x_{\algb_2}).
\]

Conversely, suppose $\nu$ is a valuation that makes all the identities from the digram true and $\nu(x_{\algb_1}) \neq \nu(x_{\algb_2})$. Let us consider the mapping $\phi: x_\alga \mapsto \nu(x_\alga); \alga \in \algA$. The fact that all the identities from the diagram are true means that $\phi$ is a homomorphism. But, since $\phi$ sends two elements from the monolith into two distinct elements, $\phi$ is an isomorphism, that is, $\phi$ embeds $\AlgA$ in $\AlgB$ 
\end{proof}

\begin{remark} The transition from the formula to a consequence relation in logical terms means the transition from the formulas to the (structural) rules (with, perhaps, countably many premises). In algebraic terms, it signifies the transition from the varieties to implicative classes (cf. \cite{Budkin_Gorbunov_1973}). Note that if we restrict the Wronski's definition to finite algebras, we will not arrive to Jankov formulas. Instead, we will obtain the quasi-characteristic rules (for the definition see \cite{Citkin1977, Rybakov_Quasi_Characteristic_1997,Rybakov_Book}). It is worth noting that an implicative class often is much narrower then a quasivariety it generates. For instance, by the Proposition \ref{infimpl}, $\delta^+(\Z_{\infty} +\Z_2) \models_{_{\Z_\infty}} p_\omega$, hence, $\Z_{\infty} + \Z_2$ is not a member of the implicative class generated by $\Z_{\infty}$,  while $\Z_{\infty}+\Z_2 \in \qvar(\Z_\infty)$.   
\end{remark}

In \cite[Definition 5.1]{Tanaka_2007} the notion of Jankov formula is extended to complete Heyting algebras, i.e. the Heyting algebras admitting infinite joins and meets. If $\AlgA$ is a complete Heyting algebra then a subset $\classC \in 2^{\algA}$ is called basis of $\AlgA$ if 
\begin{enumerate}
\item for any $\algA' \subseteq \algA$ there exists $\algC \in \classC$ such that $\lor \algA' = \lor \algC$ and for any $\algc \in \algC$ there exists $\alga \in \algA'$ such that $\alga > \algc$;
\item for any $\algA' \subset \algA$ there exists $\algC \in \classC$ such that $\land \algA' = \land \algC$ and for any $\algc \in \algC$ there exists $\alga \in \algA'$ such that $\alga < \algc$. 
\end{enumerate}
And the characteristic formula for a complete s.i. Heyting algebra $\AlgA$ and its basis $\classC$ is defined as
\[
\begin{split}
\chi(\AlgA,\classC) \bydef  & \bigwedge_{\algC \in \classC}(p_{\lor \algC} \eqv \bigvee_{\algc \in \algC} p_\algc ) \land \bigwedge_{\algC \in \classC}(p_{\land \algC} \eqv \bigwedge_{\algc \in \algC} p_\algc ) \\
&  \land \bigwedge_{\alga,\algb \in \algA} (p_{\alga \impl \algb} \eqv (p_\alga \impl p_\algb)) \land \bigwedge_{\alga \in \algA}(p_{\neg \alga} \impl \neg p_\alga). 
\end{split}
\]
Using the argument similar to the one that was used in the proof of Jankov Theorem one can prove (cf. \cite[Proposition 5.1]{Tanaka_2007}) that an s.i. complete Heyting algebra $\AlgA$ is embeddable into a homomorphic image of a complete Heyting algebra $\AlgB$ if and only if $\AlgB \not\models \chi(\AlgA,\classC)$ and the homomorphism and the embedding are continuous. 

\subsection{Locally Characteristic Identities}

As we saw in the previous section, the attempts to construct a characteristic formula for an infinite algebra requires either to use infinite formulas or to replace a formula with a consequence relation. If a consequence relation is finitary, that is $\Gamma \vdash A$ if and only if $\Gamma' \vdash A$ for a finite subset $\Gamma' \subseteq \Gamma$, we can attempt to use a set of formulas $\set{\land \Gamma' \impl A}{\Gamma' \subseteq \Gamma, \Gamma' \text{ is finite}}$ (c.f., for instance, \cite{Skura1992}).   An alternative approach to characteristic formulas of infinite algebras is by using diagrams of partial subalgebras (comp. \cite{Tomaszewski_PhD,Citkin_Char_2012}). Let us consider the latter approach. 

If $\AlgA = \lbr \algA; \Con \rbr$ is an algebra then any subset $\algA' \subset \algA$ can be turned into a partial algebra $\AlgA' = \lbr \algA'; \Con \rbr$ simply by restricting fundamental operations of $\AlgA$ to $\algA'$. In this case $\AlgA'$ is called \textit{partial} (e.g. \cite{Blok_Alten_Finite_2002}) or \textit{relative} (e.g. \cite{GraetzerB}) \textit{subalgebra} of $\AlgA$. By $\CRel(\AlgA)$ we denote a set of all partial subalgebras of $\AlgA$. Thus $\CFin\CRel(\AlgA)$ is a set of all finite non-degenerate subalgebras of $\AlgA$.

With each partial algebra $\AlgA'$ we can associate a positive diagram
\[
\begin{split}
\delta^+(\AlgA') = & \lbrace f(x_{\alga_1},\dots,x_{\alga_n}) \approx	 x_{f(\alga_1,\dots,\alga_n)}~|~ \alga_1,\dots,\alga_n \in \algA', f \in \Con \\
& \text{ and } f(\alga_1,\dots,\alga_n) \text{ is defined in } \AlgA' \rbrace.
\end{split}
\]

Recall that a mapping $\phi: \AlgA \impl \AlgB$ of partial algebra $\AlgA$ in partial algebra $\AlgB$ is a homomorphism if $\phi$ preserves all fundamental operations whenever the operation is defined in $\AlgA$. 

\begin{prop} \label{parthom} If $\AlgA$ and $\AlgB$ are partial algebras and there is a valuation $\nu$ in $\AlgB$ such that $\AlgB \models \nu(\delta^+(\AlgA))$, then there is a homomorphism $\phi: \AlgA \impl \AlgB$.  
\end{prop}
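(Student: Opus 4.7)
The plan is straightforward: define $\phi$ directly from the valuation $\nu$ and then read off the homomorphism property from the satisfaction of the diagram identities.

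First I would set $\phi(\alga) \bydef \nu(x_\alga)$ for every $\alga \in \algA$. This is a well-defined map from $\algA$ to $\algB$, since the diagram uses a distinct variable $x_\alga$ for each element $\alga$ of the universe of $\AlgA$.

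Next I would verify the partial-homomorphism condition, i.e.\ that whenever a fundamental operation $f \in \Con$ is defined on a tuple $(\alga_1,\dots,\alga_n)$ in $\AlgA$, the operation $f$ is defined on $(\phi(\alga_1),\dots,\phi(\alga_n))$ in $\AlgB$ and
\[
f(\phi(\alga_1),\dots,\phi(\alga_n)) = \phi(f(\alga_1,\dots,\alga_n)).
\]
But if $f(\alga_1,\dots,\alga_n)$ is defined in the partial algebra $\AlgA$, then by the very construction of $\delta^+(\AlgA)$ the identity
\[
f(x_{\alga_1},\dots,x_{\alga_n}) \approx x_{f(\alga_1,\dots,\alga_n)}
\]
belongs to $\delta^+(\AlgA)$. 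The hypothesis $\AlgB \models \nu(\delta^+(\AlgA))$ means that its $\nu$-instance holds in $\AlgB$; in particular the left-hand side $f(\nu(x_{\alga_1}),\dots,\nu(x_{\alga_n}))$ has to be defined in the partial algebra $\AlgB$, and it equals $\nu(x_{f(\alga_1,\dots,\alga_n)})$. Substituting the definition of $\phi$ yields exactly the required equation.

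There is essentially no obstacle: the statement is a mild restatement of what it means for $\delta^+(\AlgA)$ to hold under $\nu$. The only point that deserves a line of care is to observe that when we speak of an identity being satisfied in a partial algebra, it is understood that both sides must be defined; this is what guarantees that $\phi$ preserves not only the values of the operations but also their domains of definition.
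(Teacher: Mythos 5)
Your proof is correct and is exactly the paper's argument: the paper simply observes that $\phi: \alga \mapsto \nu(x_\alga)$ is a homomorphism, and your verification spells out why the satisfaction of the $\nu$-instances of the diagram identities delivers precisely the preservation condition required of a homomorphism of partial algebras.
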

\begin{proof} It is not hard to see that the mapping $\phi: \alga \mapsto \nu(x_\alga)$ is, indeed, a homomorphism.
\end{proof}

Let $\classV$ be a variety with a  TD term $td(x,y,z)$. A \textit{partial algebra $\AlgA$ belongs to} $\classV$ (in written $\AlgA \in \classV$) if there is 1-1-homomorphism of $\AlgA$ in some algebra from $\classV$.  Note that for partial algebras 1-1-homomorphism does not have to be an embedding.

With each finite non-degenerate partial algebra $\AlgA$ and each pair of distinct elements $\algb,\algc \in \algA, \algb \neq \algc$ we associate a \textit{locally characteristic identity} in variables $x_\alga; \alga \in \algA$: let 
\[
\delta^+(\AlgA) = \set{ t_i \approx t'_i}{ 1 \leq i \leq m};
\] 
be a diagram $\ut \bydef t_1,\dots,t_m$ and $\ut' \bydef t'_1,\dots,t'_m$ then 
\[
\chi(\AlgA, x_\alga,x_\algb) \bydef td(\ut,\ut',x_\algb) \approx td(\ut,\ut',x_\algc).
\]
In other words, we construct a characteristic identity in the same way as we did using defining relations for finitely presented algebras. However, now we are treating the diagram identities as defining relations. Let us note the following.


\begin{prop}\label{chrefut} Let $\AlgA$ be a non-degenerate finite partial algebra, $\algb,\algc \in \algA$ and $\algb \neq \algc$. Then
\[
\AlgA \not\models \chi(\AlgA,x_\algb,x_\algc).
\]
\end{prop}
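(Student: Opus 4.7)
My plan is to exhibit the ``canonical'' refuting valuation $\nu: x_\alga \mapsto \alga$ for $\alga \in \algA$, which is the partial-algebra analogue of the refuting valuation used throughout the paper for Jankov and characteristic formulas (cf.\ \eqref{selfrefut} and the proof of the converse direction in Theorem~\ref{hom}). The key observation is that the diagram $\delta^+(\AlgA)$ was written down precisely to record the fundamental operations of $\AlgA$, so under $\nu$ each pair $t_i \approx t'_i$ from the diagram automatically satisfies $\nu(t_i) = \nu(t'_i)$: if $t_i = f(x_{\alga_1},\dots,x_{\alga_n})$ and $t'_i = x_{f(\alga_1,\dots,\alga_n)}$, then both sides evaluate to the element $f(\alga_1,\dots,\alga_n) \in \algA$, which exists because the identity $t_i \approx t'_i$ was put into the diagram only when this operation is defined in $\AlgA$.

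Next I would compute the two sides of $\chi(\AlgA,x_\algb,x_\algc)$ under $\nu$. Since $\nu(t_i) = \nu(t'_i)$ for every $i$, the iterated term $td(\ut,\ut',x_\algb)$ under $\nu$ becomes $td(\nu(\ut),\nu(\ut),\algb)$; applying \eqref{td1} (the identity $td(\ua,\ua,\algc) = \algc$, which follows by a straightforward induction on $m$ from the first defining clause of a TD term in \eqref{td}) yields $\algb$. The same computation on the right-hand side yields $\algc$. Because $\algb \neq \algc$ by hypothesis, $\nu$ refutes $\chi(\AlgA,x_\algb,x_\algc)$, which gives exactly $\AlgA \not\models \chi(\AlgA,x_\algb,x_\algc)$.

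The only delicate point is the interpretation of the compound term $td(\ut,\ut',x_\algb)$ in a \emph{partial} algebra: one needs each intermediate value arising in the iterated expression \eqref{tdl} to be defined in $\AlgA$. This is where I expect the main obstacle to lie, but it dissolves as soon as one works under the standing convention (implicit in the discussion preceding the proposition, and made explicit in Proposition \ref{parthom}) that the partial algebras under consideration embed into algebras of $\classV$: the computation then takes place in the ambient $\classV$-algebra, where \eqref{td1} is a bona fide identity, and all the values produced happen to land back in $\algA$ (namely $\algb$ and $\algc$). Thus refutation of the identity in the partial algebra $\AlgA$ is witnessed by the valuation $\nu$, completing the argument.
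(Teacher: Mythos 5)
Your proof is correct and follows essentially the same route as the paper's: the paper also takes the valuation $\nu: x_\alga \mapsto \alga$, observes that every diagram identity $t_i \approx t_i'$ evaluates to a true equality because the corresponding operation is defined in $\AlgA$, and then reduces both sides to $\algb$ and $\algc$ via the identity $td(\ua,\ua,\algc)=\algc$ from \eqref{td1}. Your closing remark about evaluating the iterated TD term inside a \emph{partial} algebra addresses a point the paper's proof passes over silently, and your resolution (compute in an ambient $\classV$-algebra into which $\AlgA$ has a 1-1-homomorphism) is the right one.
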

\begin{proof} It is not hard to see that the valuation $\nu: x_\alga \mapsto \alga; \alga \in \algA$ refutes $\chi(\AlgA,x_\algb,x_\algc)$. Indeed, on one hand we have the list of $\nu(f(x_{\alga_1},\dots,x_{\alga_n}))$ for all fundamental operations and all sets of elements where these operations are defined. And, clearly, 
\[
\nu(f(x_{\alga_1},\dots,x_{\alga_n})) = f(\nu(x_{\alga_1},\dots,\nu(x_{\alga_n}))) = f(\alga_1,\dots,\alga_n).
\]
On the other hand, we have 
\[
\nu(x_{f(\alga_1,\dots,\alga_n)}) = f(\alga_1,\dots,\alga_n).   
\]
Hence, under the valuation $\nu$ the identity $\chi(\AlgA,x_\algb,x_\algc)$ is of form
\[
td(\ua,\ua,\algb) \approx td(\ua,\ua,\algc)
\]
and, for $\algb \neq \algc$, by \eqref{alg1}
\[
td(\ua,\ua,\algb) \neq td(\ua,\ua,\algc).
\]
\end{proof}

For each (at most countable) algebra $\AlgA$ we define a \textit{characteristic set} by letting
\[
CS(\AlgA) \bydef \set{ \chi(\AlgA',x_\algb,x_\algc)}{ \AlgA' \in \CFin\CRel(\AlgA), \algb,\algc \in \AlgA', \algb \neq \algc }.
\]

The following Proposition is a generalization of the Proposition \ref{prophom}.

\begin{prop} \label{genhom} Let $\AlgA$ be a non-degenerate finite partial algebra, $\AlgB$ be a partial algebra, $\alga', \alga'' \in \AlgA$ and $\alga' \neq \alga''$. Then  
\[
\begin{split}
& \AlgB \not\models \chi(\AlgA,x_{\alga'},x_{\alga''}) \text{ if and only if there is a homomorphism } \\
& \phi: \AlgA \to \AlgB' \text{ such that } \AlgB' \in \CHom(\AlgB) \text{ and }\phi(\alga') \neq \phi(\alga'').
\end{split}
\] 
\end{prop}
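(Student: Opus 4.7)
The plan is to mirror the proof of Theorem \ref{hom}, with the diagram identities $\delta^+(\AlgA)$ playing the role of the defining relations and Proposition \ref{parthom} playing the role of the universal property of finitely presented algebras. Proposition \ref{chrefut} already shows that the identity $\chi(\AlgA,x_{\alga'},x_{\alga''})$ really is refuted by the tautological valuation $x_\alga \mapsto \alga$ after moving to the ambient algebra, so all the necessary technology is in place. The main tool on the congruence side is the TD-term characterization of compact congruences, equation \eqref{adpc}, together with the identity $td(\ua,\ua,\algc) = \algc$ from \eqref{td1}.

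For the forward implication, suppose $\nu$ is a refuting valuation for $\chi(\AlgA,x_{\alga'},x_{\alga''})$ in $\AlgB$. Setting $\uc \bydef \nu(\ut)$ and $\uc' \bydef \nu(\ut')$, the refutation reads $td(\uc,\uc',\nu(x_{\alga'})) \neq td(\uc,\uc',\nu(x_{\alga''}))$, so by \eqref{adpc} we get $\nu(x_{\alga'}) \not\equiv \nu(x_{\alga''}) \pmod{\theta(\uc,\uc')}$. Let $\AlgB' \bydef \AlgB/\theta(\uc,\uc')$ with natural projection $\pi$; by construction $\AlgB' \in \CHom(\AlgB)$. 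Every pair $(t_i(\nu),t_i'(\nu))$ is collapsed in $\AlgB'$, so the composite valuation $x_\alga \mapsto [\nu(x_\alga)]_{\theta(\uc,\uc')}$ satisfies every identity of $\delta^+(\AlgA)$. By Proposition \ref{parthom} this valuation is (induced by) a homomorphism $\phi: \AlgA \to \AlgB'$, and $\phi(\alga') \neq \phi(\alga'')$ is exactly the non-congruence recorded above.

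For the reverse implication, given $\phi: \AlgA \to \AlgB'$ with $\AlgB' \in \CHom(\AlgB)$ and $\phi(\alga') \neq \phi(\alga'')$, set $\nu(x_\alga) \bydef \phi(\alga)$ in $\AlgB'$. Because $\phi$ is a partial-algebra homomorphism, each pair $(t_i,t_i')$ from $\delta^+(\AlgA)$ gets identified, so $\nu(\ut) = \nu(\ut')$ componentwise. Applying \eqref{td1} entry by entry to the iterated term \eqref{tdl} yields
\[
td(\nu(\ut),\nu(\ut'),\nu(x_{\alga'})) = \nu(x_{\alga'}) = \phi(\alga') \neq \phi(\alga'') = \nu(x_{\alga''}) = td(\nu(\ut),\nu(\ut'),\nu(x_{\alga''})),
\]
so $\nu$ refutes $\chi(\AlgA,x_{\alga'},x_{\alga''})$ in $\AlgB'$. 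Lifting this refuting valuation along the surjection $\AlgB \twoheadrightarrow \AlgB'$ (any preimages will do) produces a refuting valuation in $\AlgB$ itself, since if the two sides were equal in $\AlgB$ they would project equal in $\AlgB'$.

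The only delicate point is the interplay between partial-algebra bookkeeping and genuine congruences on $\AlgB$: the congruence $\theta(\uc,\uc')$ and the identities \eqref{adpc} and \eqref{td1} live in the ambient variety $\classV$, so the argument is cleanest when $\AlgB$ is taken to be a $\classV$-algebra (or, for a partial algebra, interpreted inside its ambient $\classV$-algebra). Once that convention is in force, the two directions above are essentially the proof of Theorem \ref{hom} with "finitely presented by $D$'' replaced by "presented by the positive diagram as a partial algebra.''
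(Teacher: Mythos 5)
Your proposal is correct and follows essentially the same route as the paper: the hard direction uses the refuting valuation to form the compact congruence $\theta(\uc,\uc')$ generated by the evaluated diagram pairs, applies \eqref{adpc} to keep the two target elements apart in the quotient, and reads off the homomorphism from the satisfied diagram (the paper verifies the homomorphism property by hand where you cite Proposition \ref{parthom}, but the content is identical); the easy direction is the same \eqref{td1} computation plus lifting the refutation back along the surjection. Your closing caveat about $\AlgB$ needing to be treated as a total $\classV$-algebra for the congruence machinery to apply is the same convention the paper adopts implicitly.
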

\begin{proof} Immediately from the Proposition \ref{chrefut} and the properties of homomorphism it follows that if $\phi: \AlgA \to \AlgB$ is a homomorphism and $\phi(\alga) \neq \phi(\alga')$, then the valuation $\nu: x_\alga \mapsto \phi(\alga); \alga \in \AlgA$ is a refuting valuation for $\chi(\AlgA,x_\algb,x_\algc)$.

Let $\AlgB \not\models \chi(\AlgA,x_\algb,x_\algc)$. We need to show that there is such a homomorphism $\ophi: \AlgA \to \AlgB' \in \CHom(\AlgB)$ that $\ophi(\algb) \neq \phi(\algc)$.

Indeed, since $\AlgB \not\models \chi(\AlgA,x_\algb,x_\algc)$, there is a valuation $\nu: x_\alga \mapsto \algb_\alga \in \AlgB$ refuting the identity $\chi(\AlgA,x_\algb,x_\algc)$. That is, 
\begin{equation}
 td(\ut(\nu(\ux)),\ut'(\nu(\ux)),\algb) \neq td(\ut(\nu(\ux)),\ut'(\nu(\ux)),\algb'), \label{hom1}
\end{equation}
where $\nu(\ux)$ denotes $\nu(x_1),\dots,\nu(x_n)$.

Let us consider the mapping $\phi: \alga \mapsto \nu(x_\alga)=\algb_\alga$:

\[
\ctdiagram{
\ctv 0,60:{x_{\alga}}
\ctv 70,60:{\alga}
\ctv 70,0:{\algb_\alga}
\ctel 0,60,70,0:{\nu}
\ctet 68,60,70,60:{}
\ctet 2,60,0,60:{}
\cter 70,2,70,0:
\ctnohead\ctdot
\def\zzctdrawdotedge{\drawdotedge{2.5pt}0}
\cten 70,60,70,0:
\ctv 75,32:{\phi}
\ctnohead\ctdash
\ctet 70,60,0,60:{}
\ctet 0,60,70,60:{}
}
\] 
\begin{center} Fig 1. The homomorphism  \end{center}
Our goal is to demonstrate that 
\begin{itemize}
\item[(a)] $\phi$ can be extended to a natural homomorphism $\ophi: \AlgA \to \AlgB/\theta$ of a suitable congruence $\theta$ ;
\item[(b)] $\algb' \neq \algb''$, where  $\algb' = \ophi(\alga')$ and $\algb'' = \ophi(\alga'')$.
\end{itemize}

\textbf{Proof of (a).} From \eqref{hom1} and the definitions of $\nu$ and $\phi$ it follows that for all fundamental operations and all the sets of elements where these operations are defined we have 
\[
\begin{split}
 \nu(f(x_{\alga_1},\dots,x_{\alga_n})) = & f(\nu(x_{\alga_1}),\dots,\nu(x_{\alga_n})) = \\
& f(\phi(\alga_1),\dots,\phi(\alga_n)) = f(\algb_{\alga_1},\dots,\algb_{\alga_n}).
\end{split}
\]
On the other hand,
\[
\nu(x_{f(\alga_1,\dots,\alga_n)}) = \phi(f(\alga_1,\dots,\alga_n)) = \algb_{f(\alga_1,\dots,\alga_n)}.
\]

Let $\theta$ be a congruence generated by pairs $(f(\algb_{\alga_1},\dots , \algb_{\alga_n}), \algb_{f(\alga_1,\dots,\alga_n)})$. Now we can apply \eqref{adpc} and conclude that in $\AlgB$ 
\[
f(\algb_{\alga_1},\dots,\algb_{\alga_m} \equiv \algb_{f(\alga_1,\dots,\alga_m)}  \pmod{\theta} \text{, while } \algb' \not\equiv \algb'' \pmod{\theta}
\]
for all fundamental operations and all lists of elements $\alga_1,\dots,\alga_m \in \algA'$ for which $f(\alga_1,\dots,\alga_m)$ is defined in $\AlgA'$. 

Let us consider the quotient algebra $\AlgB/\theta$ and the mapping 
\[
\ophi:  \alga \mapsto [\algb_\alga]_\theta.
\] 
Note that $\ophi$ is a homomorphism, for every fundamental operation $f$ and any elements $\alga_1,\dots,\alga_n \in \AlgA$ such that $f(\alga_1,\dots,\alga_n)$ we have
\[
\begin{split}
& \ophi(f(\alga_1,\dots,\alga_n)) = [\phi(f(\alga_1,\dots,\alga_n))]_\theta = [\algb_{f(\alga_1,\dots,\alga_n)}]_\theta = \\
& [f(\algb_{\alga_1},\dots,\algb_{\alga_n})]_\theta = f([\algb_{\alga_1}]_\theta,\dots,[\algb_{\alga_n}]_\theta) = f(\ophi(\alga_1),\dots,\ophi(\alga_n)).
\end{split}
\]

\textbf{Proof of (b)} From \eqref{hom1} and the definition of TD term it follows that elements $\algb'$ and $\algb''$ do not belong to the same $\theta$-congruence class, hence,
\[
\ophi(\alga')  = [\algb']_\theta \neq [\algb'']_\theta = \ophi(\alga''),
\]
so $\ophi$ is such a homomorphism of $\AlgA$ in $\AlgB/\theta$ that
$[\algb']_\theta \neq [\algb'']_\theta$.
\end{proof}

\begin{cor} \label{1-1-hom}Let $\AlgA$ be a non-degenerate finite partial algebra and $\classV$ be a variety. If 
\[
\classV \not\models \chi(\AlgA,x_{\alga'},x_{\alga''})
\]
for every distinct $\alga',\alga'' \in \algA$, then there is 1-1-homomorphism of partial algebra $\AlgA$ in an algebra $\AlgB \in \classV$.
\end{cor}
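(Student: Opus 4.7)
My plan is to obtain the desired 1-1-homomorphism by assembling, via a direct product in $\classV$, the finitely many separating homomorphisms produced by Proposition~\ref{genhom}. Because $\algA$ is finite and non-degenerate, the index set $P := \{(\alga',\alga'') \in \algA \times \algA : \alga' \neq \alga''\}$ is finite and non-empty. For each $(\alga',\alga'') \in P$, the hypothesis $\classV \not\models \chi(\AlgA,x_{\alga'},x_{\alga''})$ provides an algebra $\AlgB_{\alga',\alga''} \in \classV$ refuting this identity, and Proposition~\ref{genhom} then yields a homomorphism $\phi_{\alga',\alga''}\colon \AlgA \to \AlgB'_{\alga',\alga''}$ with $\AlgB'_{\alga',\alga''} \in \CHom(\AlgB_{\alga',\alga''}) \subseteq \classV$ (using that varieties are closed under homomorphic images) and $\phi_{\alga',\alga''}(\alga') \neq \phi_{\alga',\alga''}(\alga'')$.

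Next, I form the direct product $\AlgB := \prod_{(\alga',\alga'') \in P} \AlgB'_{\alga',\alga''}$, which lies in $\classV$ since varieties are closed under direct products, and define the coordinatewise map $\Phi\colon \AlgA \to \AlgB$ by $\Phi(\alga) := (\phi_{\alga',\alga''}(\alga))_{(\alga',\alga'') \in P}$. Two verifications remain, both routine. First, $\Phi$ is a homomorphism of partial algebras: whenever a fundamental operation $f(\alga_1,\ldots,\alga_n)$ is defined in $\AlgA$, each coordinate map preserves it, and the operations in $\AlgB$ are computed coordinatewise. Second, $\Phi$ is injective: any two distinct $\alga_1,\alga_2 \in \algA$ are already separated in the coordinate indexed by $(\alga_1,\alga_2) \in P$, because $\phi_{\alga_1,\alga_2}(\alga_1) \neq \phi_{\alga_1,\alga_2}(\alga_2)$ by construction.

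I do not anticipate a substantive obstacle. The whole argument is packaging: Proposition~\ref{genhom} does the real work of turning each failed locally characteristic identity into a homomorphism into $\classV$ that distinguishes the prescribed pair, while closure of $\classV$ under $\CProd$ and $\CHom$ lets us glue these finitely many homomorphisms into a single injective one. The one detail worth stating carefully is the step $\AlgB'_{\alga',\alga''} \in \CHom(\AlgB_{\alga',\alga''}) \subseteq \classV$, which would fail if $\classV$ were only a quasivariety rather than a variety.
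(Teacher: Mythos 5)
Your proposal is correct and follows essentially the same route as the paper: apply Proposition~\ref{genhom} to each pair of distinct elements to obtain a separating homomorphism into an algebra of $\classV$, then embed $\AlgA$ into the direct product of these targets. The paper states the final gluing step as ``not hard to see,'' whereas you spell out the coordinatewise verification and the role of closure under $\CHom$ and $\CProd$; this is just a more explicit rendering of the same argument.
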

\begin{proof} Let $\alga',\alga''$ are distinct elements from $\AlgA$ and $\AlgB(\alga',\alga'')$ be an algebra in which $\chi(\AlgA,x_{\alga'},x_{\alga''})$ is refuted. Then, by virtue of the Proposition \ref{genhom}, there is such a homomorphism
\[
\phi_{\alga',\alga''}: \AlgA \to \AlgB(\alga',\alga'')
\]
that $\phi_{\alga',\alga''}(\alga') \neq \phi_{\alga',\alga''}(\alga'')$. It is not hard to see that there is a 1-1homomorphism of $\AlgA$ in the direct product of algebras $\AlgB(\alga',\alga'')$. 
\end{proof}

The most important property of the characteristic set is the following.

\begin{theorem} \label{thembed} Let $\classV'$ be a variety with a TD term, $\classV \subseteq \classV'$ be a subvariety and $\AlgA \in \classV'$ be an algebra. Then the following are equivalent
\begin{itemize}
\item[(a)] $\AlgA \in \classV$;
\item[(b)] each identity $\idn \in CS(\AlgA)$ is refuted in $\classV$.
\end{itemize} 
\end{theorem}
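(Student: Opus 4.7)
The plan is to prove the two directions separately, with direction (b) $\Rightarrow$ (a) being handled by contrapositive.

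For (a) $\Rightarrow$ (b): I would show the stronger statement that $\AlgA$ itself refutes every identity $\chi(\AlgA',x_\algb,x_\algc) \in CS(\AlgA)$; since $\AlgA \in \classV$, it follows that $\classV$ refutes each such identity. Fix $\AlgA' \in \CFin\CRel(\AlgA)$ and distinct $\algb,\algc \in \algA'$. Consider the valuation $\nu \colon x_\alga \mapsto \alga$ for $\alga \in \algA'$ in $\AlgA$. Because $\AlgA'$ is a partial subalgebra of $\AlgA$, each diagram equation $f(x_{\alga_1},\dots,x_{\alga_n}) \approx x_{f(\alga_1,\dots,\alga_n)}$ in $\delta^+(\AlgA')$ is evaluated truly in $\AlgA$ under $\nu$; that is, $t_i(\nu(\ux)) = t_i'(\nu(\ux))$ for every $i$. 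Hence by \eqref{td1},
\[
td(\nu(\ut),\nu(\ut'),\algb) = \algb \neq \algc = td(\nu(\ut),\nu(\ut'),\algc),
\]
so $\AlgA \not\models \chi(\AlgA',x_\algb,x_\algc)$, and consequently $\classV \not\models \chi(\AlgA',x_\algb,x_\algc)$.

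For (b) $\Rightarrow$ (a): I argue by contrapositive. Suppose $\AlgA \notin \classV$. Then some identity $t(y_1,\dots,y_k) \approx t'(y_1,\dots,y_k)$ holds in $\classV$ but fails in $\AlgA$, witnessed by elements $\alga_1,\dots,\alga_k \in \algA$ with $\algb \bydef t(\alga_1,\dots,\alga_k) \neq t'(\alga_1,\dots,\alga_k) \eqqcolon \algc$. Let $\algA'$ be the finite set of all values, at $(\alga_1,\dots,\alga_k)$, of subterms of $t$ and $t'$, and let $\AlgA' \in \CFin\CRel(\AlgA)$ be the induced finite partial subalgebra of $\AlgA$. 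By construction, every fundamental operation that occurs in the computation of $t$ or $t'$ at the designated elements is defined in $\AlgA'$, so both $\algb,\algc \in \algA'$ can be expressed inside $\AlgA'$ as the results of evaluating $t$ and $t'$, respectively.

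Now I want a contradiction with (b) applied to $\chi(\AlgA',x_\algb,x_\algc) \in CS(\AlgA)$. By hypothesis there is $\AlgB \in \classV$ with $\AlgB \not\models \chi(\AlgA',x_\algb,x_\algc)$. By Proposition \ref{genhom} there exist $\AlgB' \in \CHom(\AlgB)$ and a partial-algebra homomorphism $\phi \colon \AlgA' \to \AlgB'$ with $\phi(\algb) \neq \phi(\algc)$. Since $\classV$ is closed under homomorphic images, $\AlgB' \in \classV$. A straightforward induction on term complexity, using that every subterm of $t$ and $t'$ is realized by an operation defined in $\AlgA'$ and therefore preserved by $\phi$, yields
\[
\phi(\algb) = t(\phi(\alga_1),\dots,\phi(\alga_k)) \quad\text{and}\quad \phi(\algc) = t'(\phi(\alga_1),\dots,\phi(\alga_k)).
\]
But $\AlgB' \models t \approx t'$, forcing $\phi(\algb) = \phi(\algc)$, the desired contradiction.

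The main obstacle is setting up the partial subalgebra $\AlgA'$ carefully enough that the partial-homomorphism $\phi$ really does commute with the full evaluations of $t$ and $t'$. The trick is to close $\algA'$ under the (finitely many) subterm evaluations rather than under the operations of $\AlgA$; this keeps $\AlgA'$ finite while ensuring that $\delta^+(\AlgA')$ contains every diagram equation needed to compute $t$ and $t'$ at $(\alga_1,\dots,\alga_k)$, and hence that partial-algebra homomorphism suffices to push the computation into $\AlgB'$.
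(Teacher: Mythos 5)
Your proof is correct and follows essentially the same route as the paper's: direction (a)$\Rightarrow$(b) is the paper's Proposition \ref{chrefut} (self-refutation of $\chi(\AlgA',x_\algb,x_\algc)$ via the identity valuation and \eqref{td1}), and direction (b)$\Rightarrow$(a) is the paper's argument with a separating identity, the finite partial subalgebra of subterm values, and Proposition \ref{genhom}. If anything, you are slightly more explicit than the paper about why the partial homomorphism $\phi$ commutes with the full evaluations of $t$ and $t'$, which is a welcome clarification rather than a deviation.
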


\begin{proof} (a) $\dimpl$ (b) follows immediately from the Proposition \ref{chrefut}. 

(b) $\dimpl$ (a). Assume the contrary: there is such an algebra $\AlgA \notin \classV$ that each identity $\idn \in CS(\AlgA)$ is refuted in $\classV$. Then there is an identity $r(\ux) \approx r'(\ux)$ that separates $\AlgA$ from $\classV$, that is such an identity $r \approx r'$ that 
\begin{equation}
\classV \models r \approx r'  \text{ while } \AlgA \not\models r \approx r'. \label{contr}
\end{equation}
Let $\ua$ be a list of elements of $\AlgA$ on which the identity $r \approx r'$ fails, i.e. 
\begin{equation}
r(\ua) \neq r'(\ua). 
\end{equation}
Now we can take a finite subset $\algA' \subseteq \algA$ of all the elements from $\algA$ necessary to compute values of $r(\ua)$ and $r'(\ua)$. And we can regard $\algA'$ as a partial subalgebra of $\AlgA$. Clearly, $\AlgA'$ is finite and non-degenerate. Let us consider the characteristic identity
\[
\chi(\AlgA',x_{t(\ua)},x_{t'(\ua)}).
\] 
By (b),
\[
\classV \not\models \chi(\AlgA',x_{r(\ua)},x_{r'(\ua)}).
\] 
Hence, for some algebra $\AlgB \in \classV$
\[
\AlgB \not\models \chi(\AlgA',x_{r(\ua)},x_{r'(\ua)}).
\]
By virtue of the Proposition \ref{genhom}, there is a homomorphism $\phi: \AlgA' \to \AlgB'$ such that $\AlgB' \in \CHom(\AlgB)$ and $\phi(r(\ua)) \neq \phi(r'(\ua))$. Therefore, $r(\phi(\ua)) \neq r'(\phi(\ua))$. Let $\ub \bydef \phi(\ua)$. Then $r(\ub) \neq r'(\ub)$, that is
\[
\AlgB' \not\models r \approx r'.
\]
Recall that $\AlgB' \in \CHom(\AlgB)$ and, for $\AlgB \in \classV$, we have $\AlgB' \in \classV$. Thus,
\[
\classV \not\models r \approx r' 
\]
and the latter contradicts \eqref{contr}.
\end{proof}

We can rephrase the above theorem in the way that will be more convenient for applications.

\begin{cor}\label{embed1} Let $\classV'$ be a variety with a TD term, $\classV \subseteq \classV'$ be a subvariety and $\AlgA \in \classV'$ be an algebra. Then the following are equivalent
\begin{itemize}
\item[(a)] $\AlgA \in \classV' \setminus \classV$;
\item[(b)] there is an identity $\idn \in CS(\AlgA)$ such that $\classV \models \idn$.
\end{itemize} 
\end{cor}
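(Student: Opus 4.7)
The plan is straightforward: Corollary \ref{embed1} is the contrapositive reformulation of Theorem \ref{thembed}, so the ``proof'' consists of unpacking what the negations of the two conditions mean under the standing hypothesis $\AlgA \in \classV'$. First I would observe that, given $\AlgA \in \classV'$, the assertion $\AlgA \in \classV' \setminus \classV$ is simply the negation of condition (a) of Theorem \ref{thembed}, namely $\AlgA \notin \classV$.

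Next I would unpack the negation of condition (b) of Theorem \ref{thembed}. Since ``$\idn$ is refuted in $\classV$'' means precisely $\classV \not\models \idn$, the statement ``each identity $\idn \in CS(\AlgA)$ is refuted in $\classV$'' reads as $\classV \not\models \idn$ for every $\idn \in CS(\AlgA)$. Its logical negation is therefore ``there exists $\idn \in CS(\AlgA)$ with $\classV \models \idn$,'' which is exactly condition (b) of Corollary \ref{embed1}. Taking the contrapositive of the equivalence (a)$\Leftrightarrow$(b) of Theorem \ref{thembed} then yields the corollary. I do not expect any genuine obstacle here: the only thing to be careful about is to make explicit that ``refuted in $\classV$'' is understood as refutable in some algebra of $\classV$ (equivalently $\classV \not\models \idn$), so that the two conditions negate cleanly. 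No further appeal to the TD term, to Proposition \ref{genhom}, or to any embedding construction is needed.
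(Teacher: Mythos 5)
Your proposal is correct and matches the paper's intent exactly: the paper states Corollary \ref{embed1} as a mere rephrasing of Theorem \ref{thembed} without further argument, and your contrapositive unpacking (using that ``refuted in $\classV$'' means $\classV \not\models \idn$) is precisely the justification. No further comment is needed.
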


\begin{cor} \label{partref} Let $\classV$ be a variety with a TD term, $\AlgA \in \classV$ be an algebra and $\idn$ be an identity. Then the following are equivalent
 \begin{itemize}
\item[(a)] $\AlgA \not\models \idn$;
\item[(b)] $\idn \models_\classV \chi(\AlgA',x_\alga,x_\algb)$ for some finite partial subalgebra $\AlgA'$ and some distinct elements $\alga,\algb \in \algA$; 
\end{itemize}
\end{cor}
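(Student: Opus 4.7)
The plan is to prove both implications directly, using Proposition \ref{chrefut} (strengthened slightly so that it applies to the ambient algebra $\AlgA$, not only to $\AlgA'$) for (b)$\dimpl$(a), and Proposition \ref{genhom} for (a)$\dimpl$(b).

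For (b)$\dimpl$(a), I would first observe that the obvious valuation $\nu \colon x_\algd \mapsto \algd$ (for $\algd \in \algA'$) refutes $\chi(\AlgA', x_\alga, x_\algb)$ not only in $\AlgA'$ but already in $\AlgA$: every diagram equation $f(x_{\alga_1}, \dots, x_{\alga_n}) \approx x_{f(\alga_1,\dots,\alga_n)}$ of $\delta^+(\AlgA')$ becomes a true equality in $\AlgA$ under $\nu$, so $\nu(\ut(\ux)) = \nu(\ut'(\ux))$ as tuples in $\AlgA$; writing $\ub$ for this common tuple, \eqref{td1} gives $td(\ub, \ub, \alga) = \alga \neq \algb = td(\ub, \ub, \algb)$. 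Hence $\AlgA \not\models \chi(\AlgA', x_\alga, x_\algb)$. Since $\AlgA \in \classV$ and, by hypothesis, the quasi-identity ``$\idn \Rightarrow \chi(\AlgA', x_\alga, x_\algb)$'' holds throughout $\classV$, the contrapositive forces $\AlgA \not\models \idn$.

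For (a)$\dimpl$(b), write $\idn$ as $r(\uy) \approx r'(\uy)$, pick a tuple $\ua$ in $\AlgA$ with $r(\ua) \neq r'(\ua)$, and let $\algA'$ be the finite set of all elements of $\algA$ arising as values of subterms during the evaluation of $r(\ua)$ and $r'(\ua)$. Viewed as a partial subalgebra of $\AlgA$, $\AlgA'$ is finite and non-degenerate and contains the distinct elements $\alga := r(\ua)$ and $\algb := r'(\ua)$. To verify $\idn \models_\classV \chi(\AlgA', x_\alga, x_\algb)$, take any $\AlgB \in \classV$ that refutes $\chi(\AlgA', x_\alga, x_\algb)$. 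Proposition \ref{genhom} produces a partial-algebra homomorphism $\phi \colon \AlgA' \to \AlgB'$ with $\AlgB' \in \CHom(\AlgB)$ and $\phi(\alga) \neq \phi(\algb)$. By the choice of $\algA'$, a short induction along the subterms of $r$ and $r'$ (each of whose values lies in $\algA'$ and so is respected by $\phi$) yields $r(\phi(\ua)) = \phi(\alga)$ and $r'(\phi(\ua)) = \phi(\algb)$ in $\AlgB'$, so $\AlgB' \not\models \idn$; since identities are preserved by homomorphic images, $\AlgB \not\models \idn$ as required.

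The main delicate point will be the partial-algebra bookkeeping in (a)$\dimpl$(b): one has to make sure that $\algA'$ truly contains the value of \emph{every} subterm appearing in the computation of $r(\ua)$ and $r'(\ua)$, so that the corresponding fundamental operations are all defined in the partial subalgebra and hence preserved by the partial homomorphism $\phi$. Once this closure property is in place, the inductive identity $r(\phi(\ua)) = \phi(r(\ua))$ inside $\AlgB'$ is routine, and the rest of the proof is a direct assembly of Propositions \ref{chrefut} and \ref{genhom} together with \eqref{td1}.
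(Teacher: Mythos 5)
Your proposal is correct and follows essentially the same route as the paper: the (a)$\dimpl$(b) direction is the paper's argument verbatim (refuting tuple, finite partial subalgebra of all subterm values, Proposition \ref{genhom} to transport the refutation of $\idn$ to a homomorphic image of $\AlgB$), and the (b)$\dimpl$(a) direction merely spells out in $\AlgA$ itself the step the paper compresses into ``$\AlgA' \not\models \chi(\AlgA',x_\alga,x_\algb)$ and $\AlgA'$ is a partial subalgebra of $\AlgA$.'' Your explicit closing remarks --- that $\algA'$ must contain every intermediate subterm value so that $\phi$ commutes with the evaluation of $r$ and $r'$, and that refutation in $\AlgB' \in \CHom(\AlgB)$ passes back to $\AlgB$ --- are exactly the points the paper leaves implicit, and you correctly invoke Proposition \ref{genhom} where the paper's text cites Proposition \ref{parthom}.
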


\begin{proof} (b) $\Rightarrow$ (a). By the Proposition \ref{chrefut}, $\AlgA' \not\models \chi(\AlgA',x_\alga,x_\algb)$ and, since $\AlgA'$ is a partial subalgebra of $\AlgA$, we have $\AlgA \not\models \chi(\AlgA',x_\alga,x_\algb)$. Hence, if (b) holds $\idn$ cannot be valid in $\AlgA$.

(a) $\Rightarrow$ (b). Assume $\idn \bydef t \approx t'$ and $\AlgA \not\models t \approx t'$. Then there is a valuation $\nu$ in $\AlgA$ refuting $\idn$, that is $\nu(t) \neq \nu(t')$. Let $\AlgA'$ be a finite partial subalgebra of $\AlgA$ consisting of all the elements of $\AlgA$ needed to refute $\idn$ by valuation $\nu$. And let $\algb = \nu(t)$ and $\algb' = \nu(t')$. Clearly $\algb,\algb' \in \algA'$ and $\AlgA'$ is non-degenerate, for $\algb \neq \algb'$. Let us consider $\chi(\AlgA',x_\algb,x_{\algb'})$.

If in any algebra $\AlgB \in \classV$ the identity $\chi(\AlgA',x_\algb,x_{\algb'})$ is refuted, i.e.
\[
\AlgB \not\models \chi(\AlgA',x_\algb,x_{\algb'}),
\]  
then, by virtue of the Proposition \ref{parthom}, there is a homomorphism $\phi: \AlgA' \to \AlgB'$ where $\AlgB'$ is a homomorphic image of $\AlgB$ and $\phi(\algb) \neq \phi(\algb')$. Thus,
\[
\AlgB' \not\models t \approx t'.
\]
and this observation concludes the proof. \end{proof}

\subsection{Axiomatization by Locally Characteristic Identities}

As we already know from the Section \ref{not ax}, not every subvariety of a variety $\classV$ with a TD term can be axiomatized by characteristic identities. But, using the locally characteristic identities, one can axiomatize any subvariety of $\classV$. In this respect the locally characteristic identities are similar to Zakharyaschev's canonical formulas (for definitions see \cite{Chagrov_Zakh}; in the Section \ref{canon} we will discuss the relations between locally characteristic identities and canonical formulas).  

\begin{theorem} \label{fomdecomp} (comp. \cite[Theorem 9.43]{Chagrov_Zakh}) Let $\classV$ be a variety with a TD term and $\idn$ be such an identity that $\classV \not\models \idn$. Then there is a finite set of locally characteristic identities $\Gamma$ such that $\Gamma \sim_\classV \idn$. 
\end{theorem}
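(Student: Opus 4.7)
The plan is to construct a single locally characteristic identity $\gamma$ that is $\classV$-equivalent to $\idn$ in the sense that the class of $\classV$-algebras satisfying $\idn$ coincides with the class satisfying $\gamma$, so the desired finite set can be taken to be $\Gamma = \{\gamma\}$. The construction exploits the $\classV$-free algebra to obtain a single universal partial subalgebra through which all refutations of $\idn$ factor.

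Write $\idn$ as $t(\ux) \approx t'(\ux)$ with $\ux = x_1,\dots,x_n$ the variables actually appearing in $t$ or $t'$, and let $\AlgF \bydef \AlgF_\classV(n)$ be the $\classV$-free algebra on $x_1,\dots,x_n$. Since $\classV \not\models \idn$, the identity valuation refutes $\idn$ in $\AlgF$, so $\algb \bydef t^\AlgF(\ux)$ and $\algb' \bydef (t')^\AlgF(\ux)$ are distinct elements of $\AlgF$. I would take $\AlgA'$ to be the finite partial subalgebra of $\AlgF$ whose universe consists of the $\AlgF$-values of all subterms of $t$ and of $t'$; by construction $\AlgA'$ is finite, non-degenerate, and contains every generator $x_i$. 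Set $\gamma \bydef \chi(\AlgA', x_\algb, x_{\algb'})$.

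For one direction, suppose $\AlgB \in \classV$ refutes $\idn$ via a valuation $\nu\colon \ux \to \AlgB$. Freeness of $\AlgF$ extends $\nu$ uniquely to a total homomorphism $\tilde\nu\colon \AlgF \to \AlgB$, whose restriction to $\AlgA'$ is a partial-algebra homomorphism $\phi\colon \AlgA' \to \AlgB$ with $\phi(\algb) = \nu(t) \neq \nu(t') = \phi(\algb')$. Proposition~\ref{genhom}, applied with $\AlgB' \bydef \AlgB$, then yields $\AlgB \not\models \gamma$. Hence every $\classV$-algebra validating $\gamma$ validates $\idn$. For the reverse direction, suppose $\AlgB \in \classV$ refutes $\gamma$. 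By Proposition~\ref{genhom} there exist $\AlgB' \in \CHom\AlgB$ and a partial homomorphism $\phi\colon \AlgA' \to \AlgB'$ with $\phi(\algb) \neq \phi(\algb')$. Define a valuation $\nu\colon \ux \to \AlgB'$ by $\nu(x_i) \bydef \phi(x_i)$, well-defined since each $x_i$ lies in $\AlgA'$. A straightforward induction on the depth of subterms, using that $\AlgA'$ is closed under precisely the operations needed to evaluate subterms of $t$ and $t'$ and that $\phi$ preserves every defined operation of $\AlgA'$, shows $\nu(s) = \phi(s^\AlgF(\ux))$ for every subterm $s$; specializing to $s = t$ and $s = t'$ gives $\nu(t) = \phi(\algb) \neq \phi(\algb') = \nu(t')$, so $\AlgB' \not\models \idn$ and therefore $\AlgB \not\models \idn$, since identities pass to homomorphic images.

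The conceptual heart of the argument, and the only step that requires thought rather than routine verification, is the recognition that the $\classV$-free algebra supplies a single universal refuting configuration of $\idn$ through which every other refutation in $\classV$ factors. This is why one partial subalgebra, namely the one spanned by the $\AlgF$-values of subterms of $t$ and $t'$, already witnesses every possible refutation in the variety, and a disjunction over refutation patterns (as in the frame-theoretic decomposition of Chagrov--Zakharyaschev) is unnecessary; the theorem holds with $|\Gamma| = 1$. The subterm induction in the second direction is then bookkeeping, provided one is careful that $\AlgA'$ was defined so that every operation invoked by the induction is a defined partial operation of $\AlgA'$.
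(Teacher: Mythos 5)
Your proof is correct, but it takes a genuinely different route from the paper's. The paper builds $\Gamma$ by enumeration: a refuting valuation for $\idn \bydef t \approx t'$ lands in a partial subalgebra with at most $k$ elements ($k$ the number of subterms of $\idn$), so one collects all such partial algebras up to isomorphism together with all refuting valuations and takes the locally characteristic identity of each configuration; finiteness of $\Gamma$ comes from finiteness of the signature. You instead observe that every refutation of $\idn$ in $\classV$ factors through the free algebra $\AlgF_\classV(n)$, so the single partial subalgebra of $\AlgF_\classV(n)$ spanned by the values of the subterms of $t$ and $t'$ already witnesses every refutation, and $|\Gamma|=1$ suffices. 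Both directions of your equivalence check out; in particular your explicit subterm induction supplies a step the paper leaves implicit when it asserts that $\nu\circ\phi$ refutes $\idn$ in $\AlgB'$. As for what each approach buys: yours yields the sharper conclusion that $\idn$ is equipotent to a \emph{single} locally characteristic identity, while the paper's combinatorial enumeration is effective whenever the finite algebras of $\classV$ can be listed, which is exactly what the Remark following the theorem exploits --- $\AlgF_\classV(n)$ need not be effectively presentable, and may be infinite even though the relevant partial subalgebra is finite. One caveat applies equally to both arguments: what is actually established is the equipotence $\Gamma \approx_\classV \idn$ (the two define the same subvariety over $\classV$), not the literal quasi-identity equivalence $\sim_\classV$ written in the statement; since the paper's own proof establishes no more, this is not a defect of your proposal.
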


\begin{proof} Let $\idn \bydef t \approx t'$ be an identity such that $\classV \not\models \idn$. We  assume that $\idn$ contains variables only from $X = \set{ x_i}{ 1 \leq i \leq m }$. Let $V$ be a class of all valuations $\nu: X \to \AlgA \in \classV$ refuting $\idn$, that is, $\nu(t) \neq \nu(t')$. If $\idn$ contains $k$ subterms, then $\nu$ refutes $\idn$ in a partial subalgebra of $\AlgA$ containing not more than $k$ elements. Let $\classK$ be a set of all distinct modulo isomorphism such partial subalgebras. Clearly, $\classK$ is a finite set. Suppose $V$ is a class of all refuting valuations in $\classK$. Let
\[
\Gamma \bydef \set{ \chi(\AlgA',x_{\nu(t)},x_{\nu(t')})}{ \nu \in V, \AlgA' \in \classK },
\]
i.e. $\Gamma$ is a class of all locally characteristic identities representing all possible refutations of $\idn$ in $\classV$. Let us demonstrate that
\[
\Gamma \sim_\classV \idn.
\]

In order to demonstrate that $\Gamma \models_\classV \idn$, we need to verify that for any algebra $\AlgA \in \classV$, $\AlgA \not\models \idn$ yields $\AlgA \not\models \chi(\AlgA',x_\alga,x_\algb)$ for some $\chi(\AlgA',x_\alga,x_\algb) \in \Gamma$. Indeed, if $\AlgA \not\models \idn$, then $\idn$ is refutable in some not more than $k$-element partial subalgebra $\AlgA'$ of $\AlgA$. Isomorphic copy of $\AlgA'$ belongs to $\classK$. Hence, there is a valuation $\nu$ refuting $\idn$ in $\classK$. Let us consider the corresponding locally characteristic identity from $\Gamma$. By virtue of the Proposition \ref{chrefut}, this identity is invalid in $\AlgA'$ and, hence, it is invalid in $\AlgA$ too.
 
Next, we prove that $\idn \vDash_\classV \chi(\AlgA',x_{\nu(t)},x_{\nu(t')})$ for every $\nu \in V$. Indeed, if $\AlgB \not\models \chi(\AlgA',x_{\nu(t)},x_{\nu(t')})$, then, according to the Proposition \ref{genhom}, there is a homomorphism $\phi: \AlgA' \to \AlgB'$ where $\AlgB' \in \CHom(\AlgB)$ and 
\[
\phi(\nu(t)) \neq \phi(\nu(t')).
\]
Thus, $\nu \circ \phi$ refutes $\idn$ in $\AlgB'$. Clearly, $\idn$ cannot be valid in $\AlgB$.
\end{proof}

As a simple consequence of the above theorem we obtain the following:

\begin{theorem} \label{locaxiom} Let $\classV$ be a variety with a TD term. Then any subvariety of $\classV$ can be axiomatized over $\classV$ by locally characteristic identities. Moreover, if a subvariety $\classV' \subset \classV$ is finitely axiomatizable over $\classV$ it is finitely axiomatizable by locally characteristic identities.
\end{theorem}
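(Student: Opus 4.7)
The plan is to reduce this theorem directly to Theorem \ref{fomdecomp}, which already supplies the local replacement mechanism for a single defining identity. First I would fix an arbitrary set of identities $\Sigma$ axiomatizing $\classV'$ over $\classV$, which exists simply because $\classV' \subseteq \classV$ is a subvariety. Then I would discard every $\idn \in \Sigma$ with $\classV \models \idn$, since such $\idn$ carries no information about $\classV'$ relative to $\classV$ and can be removed without changing the axiomatization. So without loss of generality each remaining $\idn \in \Sigma$ satisfies $\classV \not\models \idn$, which is exactly the hypothesis of Theorem \ref{fomdecomp}.

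Next, for each such $\idn$ I would invoke Theorem \ref{fomdecomp} to obtain a finite set $\Gamma_\idn$ of locally characteristic identities with $\Gamma_\idn \sim_\classV \idn$. Taking $\Gamma \bydef \bigcup_{\idn \in \Sigma}\Gamma_\idn$, the proposed axiomatization of $\classV'$ by locally characteristic identities is $\Gamma$. The verification is then a chain of equivalences: for any $\AlgA \in \classV$, $\AlgA \models \Gamma$ iff $\AlgA \models \Gamma_\idn$ for every $\idn \in \Sigma$ iff $\AlgA \models \idn$ for every $\idn \in \Sigma$ (by the $\classV$-equivalence $\Gamma_\idn \sim_\classV \idn$) iff $\AlgA \in \classV'$. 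This establishes the first assertion.

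For the moreover-part, if $\classV'$ is finitely axiomatizable over $\classV$ I would choose $\Sigma$ finite at the outset. Then $\Gamma$ is a finite union of finite sets, hence finite, giving the required finite axiomatization of $\classV'$ by locally characteristic identities over $\classV$. I do not expect a genuine obstacle: all the substantive work has already been carried out in Theorem \ref{fomdecomp}, and the remaining steps here are purely formal bookkeeping.
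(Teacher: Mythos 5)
Your proposal is correct and is precisely the argument the paper intends: the paper derives Theorem \ref{locaxiom} as an immediate consequence of Theorem \ref{fomdecomp} by replacing each axiom not valid in $\classV$ with its $\classV$-equivalent finite set of locally characteristic identities. Your filling in of the bookkeeping (discarding axioms valid in $\classV$, taking the union, and noting finiteness is preserved) is exactly right.
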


\begin{remark}
The analysis of the proof of the Theorem \ref{fomdecomp} shows that if there is an algorithm for listing of all finite non-degenerate algebras of $\classV$, then there is an algorithm that by an identity $\idn$ such that $\classV \not\models \idn$ gives
the equivalent set of locally characteristic identities.
\end{remark} 

On the set of non-degenerate finite partial algebras one can introduce a quasi-order in the way similar to the one we used for finite s.i. algebras: if $\AlgA,\AlgB$ are finite non-degenerate partial algebras then 
\begin{equation}
\AlgA \leq \AlgB \bydef \AlgB \not\models \chi(\AlgA,x_\alga,x_\algb) \text{ for all } \alga,\algb \in \AlgA, \ \alga \neq \algb. \label{qusiord}
\end{equation}

\begin{prop} The relation $\leq$ defined by \eqref{qusiord} is a quasi-order.
\end{prop}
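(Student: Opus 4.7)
The plan is to verify reflexivity and transitivity separately. Reflexivity is immediate: for any distinct $\alga, \algb \in \algA$, Proposition~\ref{chrefut} gives $\AlgA \not\models \chi(\AlgA, x_\alga, x_\algb)$, and thus $\AlgA \leq \AlgA$.

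For transitivity, suppose $\AlgA \leq \AlgB$ and $\AlgB \leq \AlgC$, and fix distinct $\alga_1, \alga_2 \in \algA$. The strategy is to chain two refutations through the TD term. By $\AlgA \leq \AlgB$ there is a valuation $\mu$ in $\AlgB$ refuting $\chi(\AlgA, x_{\alga_1}, x_{\alga_2})$; writing $\ut \approx \ut'$ for the diagram equations of $\AlgA$, this says precisely that the elements
\[
\algb_i^* \bydef td(\mu(\ut),\, \mu(\ut'),\, \mu(x_{\alga_i})) \in \algB \qquad (i=1,2)
\]
are distinct. Applying $\AlgB \leq \AlgC$ to the new pair $\algb_1^*, \algb_2^*$ and invoking Proposition~\ref{genhom}, I pick a homomorphism $\bar\eta \colon \AlgB \to \AlgC^*$ with $\AlgC^* \in \CHom \AlgC$ and $\bar\eta(\algb_1^*) \neq \bar\eta(\algb_2^*)$. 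Now form the composite valuation $\nu(x_\alga) \bydef \bar\eta(\mu(x_\alga))$ into $\AlgC^*$; the claim is that $\nu$ refutes $\chi(\AlgA, x_{\alga_1}, x_{\alga_2})$ in $\AlgC^*$. Indeed, since $\bar\eta$ is a partial-algebra homomorphism and $td$ is a term in the signature, pushing $\bar\eta$ through the definition of $\algb_i^*$ gives $\bar\eta(\algb_i^*) = td(\nu(\ut),\, \nu(\ut'),\, \nu(x_{\alga_i}))$, and these two values are distinct in $\AlgC^*$. Because $\AlgC^* \in \CHom \AlgC$ and identities pass to homomorphic images, $\AlgC$ also refutes $\chi(\AlgA, x_{\alga_1}, x_{\alga_2})$. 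Since the pair was arbitrary, $\AlgA \leq \AlgC$.

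The main step to check carefully is the commutation $\bar\eta(\mu(t_j)) = \nu(t_j)$ for each term $t_j$ (and $t_j'$) appearing in the diagram of $\AlgA$; I expect this to drop out directly from $\bar\eta$ being a homomorphism of partial algebras, which by definition preserves every fundamental operation wherever it is defined on the subterms produced by $\mu$. Everything else is routine: reflexivity cites Proposition~\ref{chrefut}, transitivity reduces cleanly to two applications of Proposition~\ref{genhom} together with the elementary observation that refutation in a homomorphic image lifts to refutation in the original algebra.
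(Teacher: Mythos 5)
Your proof is correct, and for reflexivity it is exactly the paper's: cite Proposition~\ref{chrefut} for every pair of distinct elements. For transitivity the comparison is more interesting, because the paper's own proof breaks off after its first sentence (it fixes $\alga\neq\algb$ in $\AlgA$, notes $\AlgB\not\models\chi(\AlgA,x_\alga,x_\algb)$, and stops); your argument supplies exactly the missing chain. The route you take --- extract a refuting valuation $\mu$ in $\AlgB$, observe that the two TD-term values $\algb_1^*,\algb_2^*$ are distinct elements of $\algB$, apply $\AlgB\leq\AlgC$ to \emph{that} pair and Proposition~\ref{genhom} to get $\bar\eta\colon\AlgB\to\AlgC^*\in\CHom(\AlgC)$ separating them, and then compose to refute $\chi(\AlgA,x_{\alga_1},x_{\alga_2})$ in $\AlgC^*$ and hence in $\AlgC$ --- is the natural (and, as far as I can see, the intended) completion. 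You are also right to flag the commutation $\bar\eta(\mu(t_j))=\nu(t_j)$ as the one point needing care: it holds because every operation used in evaluating the diagram terms and the iterated TD term is defined in $\AlgB$ at the relevant arguments (otherwise $\mu$ could not have refuted the identity there), and a partial-algebra homomorphism preserves all operations wherever defined. The only residual imprecision --- whether refutation in a homomorphic image of a \emph{partial} algebra lifts back to the algebra itself --- is inherited from the paper, which uses the same step elsewhere (e.g.\ in the proof of Theorem~\ref{thembed}), so it is not a defect of your argument relative to the source.
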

\begin{proof}
The reflexivity follows straight from the Proposition \ref{chrefut}. 

Let $\AlgA, \AlgB, \AlgC$ are finite non-degenerate partial algebras and $\AlgA \leq \AlgB$ and $\AlgB \leq \AlgC$. Let also $\alga,\algb \in \AlgA$ and $\alga \neq \algb$. Then by the definition of $\leq$ we have $\AlgB \not\models \chi(\AlgA,x_\alga,x_\algb)$.
\end{proof}

It is easy to see that the introduced above quasi-order satisfies the descending chain condition, thus in any set of partial algebras we can select a subset of all minimal elements. More precisely, if $\classV' \subset \classV$ is a subvariety, we can take a set of all minimal partial subalgebras of algebras from $\classV \setminus \classV'$ locally characteristic identities of which are valid in $\classV$. These identities give an axiomatization of $\classV'$ over $\classV$. It is worth noticing that, in  contrast to the characteristic identities, the locally characteristic identities of the minimal partial algebras may not be independent. Indeed, by virtue of the Theorem \ref{locaxiom}, any variety with a TD term (or a corresponding logic) would be independently axiomatizable, which is not true: in \cite{Chagrov_Zakh_1995_indep} it is observed that not every intermediate or normal modal logic is independently axiomatizable.

\subsection{Characteristic vis a vis Canonical Formulas} \label{canon}

We say that a set $\classI$ of identities is \textit{$a$-complete}\footnote{in \cite{Tomaszewski_PhD} such sets are called "sufficiently rich".} over a given variety $\classV$ if any subvariety of $\classV$ can be axiomatized by a subset of identities from $\classI$. A trivial example of an $a$-complete set is a set of all identities. As it follows from the Theorem \ref{locaxiom}, a set of all locally-characteristic identities is $a$-complete over any variety with a TD term. On the other hand, the set of characteristic identities is $a$-complete not over every variety (see e.g. the Theorem \ref{critfa}). For instance, as we saw in  the Example \ref{heytnotax}, the set of characteristic identities is not $a$-complete over variety $\Heyt$ of Heyting algebras. While the set of identities corresponding to Zkharyaschev's canonical formulas (see e.g. \cite{Chagrov_Zakh}) is indeed $a$-complete. In this section we first give a simple sufficient condition for a set of locally characteristic identities to be $a$-complete and then we will discuss the relations between locally-characteristic an canonical identities. 

\subsubsection{A Sufficient Condition for $a$-Completion}

Let $\classV$ be a variety with a TD term. Given partial algebra $\AlgA, \AlgB$ we write $\AlgA \unlhd \AlgB$ if there is 1-1-homomorphism $\AlgA \impl \AlgB$. We say that a class $\classK$ of finite partial algebras is $p$-\textit{complete in} $\classV$ (we will often omit a reference to $\classV$), if for each algebra $\AlgA \in \classV$ and each finite partial algebra $\AlgB$
\begin{equation}
\text{if } \AlgB \unlhd \AlgA \text{ then there is } \AlgC \in \classK \text{ such that } \AlgB \unlhd \AlgC \unlhd \AlgA. \label{partproj}
\end{equation}

\begin{theorem} Let $\classV$ be a variety with a TD term and $\classK$ a set $p$-complete in $\classV$. Then The set $\classL$ of all characteristic identities from $\classK$ is $a$-complete. 
\end{theorem}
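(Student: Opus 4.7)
The plan is to fix an arbitrary subvariety $\classV' \subseteq \classV$ and set
\[
\Gamma \bydef \set{\chi \in \classL}{\classV' \models \chi}.
\]
I would then show that $\Gamma$ axiomatizes $\classV'$ over $\classV$. One direction is immediate from the definition of $\Gamma$, so it suffices to pick an arbitrary $\AlgA \in \classV \setminus \classV'$ and manufacture some $\chi \in \Gamma$ refuted by $\AlgA$.

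First, I would apply Corollary \ref{embed1} (with the roles of the ambient and the sub-variety interchanged relative to its statement) to obtain a finite non-degenerate partial subalgebra $\AlgB$ of $\AlgA$ and distinct elements $\alga, \algb \in \algB$ such that the locally characteristic identity $\chi(\AlgB, x_\alga, x_\algb)$ holds in $\classV'$. This identity separates $\AlgA$ from $\classV'$ but $\AlgB$ itself need not lie in $\classK$. Here the $p$-completeness of $\classK$ enters: I can interpose some $\AlgC \in \classK$ together with $1$-$1$-homomorphisms $\iota \colon \AlgB \to \AlgC$ and $\psi \colon \AlgC \to \AlgA$. I would then take $\chi(\AlgC, x_{\iota(\alga)}, x_{\iota(\algb)})$ as the desired member of $\Gamma$ and verify both that it is refuted by $\AlgA$ and that it is valid in $\classV'$.

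For the refutation in $\AlgA$, the valuation $\nu \colon x_\algc \mapsto \psi(\algc)$ makes every identity of the diagram $\delta^+(\AlgC)$ hold in $\AlgA$, because $\psi$ is a partial-algebra homomorphism; then \eqref{td1} collapses the characteristic identity under $\nu$ to $\psi(\iota(\alga)) \approx \psi(\iota(\algb))$, which fails because $\psi \circ \iota$ is $1$-$1$. For the validity in $\classV'$, I would argue by contradiction: any refuting algebra in $\classV'$ would, by Proposition \ref{genhom}, admit a homomorphism $\phi \colon \AlgC \to \mathscr{M}$ into some homomorphic image $\mathscr{M}$ with $\phi(\iota(\alga)) \neq \phi(\iota(\algb))$. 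Composing with $\iota$ would yield a homomorphism $\phi \circ \iota \colon \AlgB \to \mathscr{M}$ separating $\alga$ from $\algb$, which by the converse direction of Proposition \ref{genhom} would force the same algebra of $\classV'$ to refute $\chi(\AlgB, x_\alga, x_\algb)$ as well, contradicting the choice of $\AlgB$.

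The main obstacle I anticipate is precisely this last monotonicity step: the fact that, whenever $\AlgB \unlhd \AlgC$, validity of $\chi(\AlgB, x_\alga, x_\algb)$ in a given algebra implies validity of $\chi(\AlgC, x_{\iota(\alga)}, x_{\iota(\algb)})$ there. Everything else reduces cleanly to Corollary \ref{embed1} and \eqref{td1}, but the monotonicity relies essentially on both implications of Proposition \ref{genhom} together with the fact that partial-algebra homomorphisms compose; a brief verification is needed to ensure that $\iota$ being only a $1$-$1$-homomorphism of partial algebras (rather than a total embedding) does not obstruct the pull-back of the refutation.
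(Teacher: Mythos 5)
Your proposal is correct and follows essentially the same route as the paper: separate $\AlgA \in \classV \setminus \classV'$ by a locally characteristic identity of a finite partial subalgebra $\AlgB \unlhd \AlgA$ valid in $\classV'$, interpose $\AlgC \in \classK$ via $p$-completeness, and check that the corresponding identity of $\AlgC$ is refuted in $\AlgA$ yet valid in $\classV'$. The one point where you go beyond the paper is welcome rather than divergent: the paper asserts the final validity claim in a single terse sentence, whereas you correctly isolate and justify the monotonicity step (validity of $\chi(\AlgB,x_\alga,x_\algb)$ transfers to $\chi(\AlgC,x_{\iota(\alga)},x_{\iota(\algb)})$ when $\AlgB \unlhd \AlgC$) via both directions of Proposition \ref{genhom} and composition of partial-algebra homomorphisms.
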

\begin{proof} If $\classV' \subset \classV$ is a subvariety, then, for each algebra $\AlgA \in \classV \setminus \classV'$ there is an identity $\idn$ such that $\classV' \models \idn$, while $\AlgA \not\models \idn$. So, there is a partial subalgebra $\AlgB \unlhd \AlgA$ in which $\idn$ is refutable. By \eqref{locaxiom}, there is a partial algebra $\AlgC$ such that $\AlgB \unlhd \AlgC \unlhd \AlgA$. Clearly, $\AlgC \not\models \idn$. Hence, one of the characteristic identities (let say, $\chi(\AlgB)$), of $\AlgC$ is valid in $\classV$, for $\classV \models \idn$. On the other hand, any characteristic identity of $\AlgC$ is refutable in $\AlgC$. Thus, $\chi(\AlgB)$ separates $\AlgA$ from $\classV'$ and this observation completes the proof.   
\end{proof}

Sometimes, the existence of convenient $p$-complete classes is related to the following properties of subreducts.

Let $\Con' \subseteq \Con$ be a subset of operations (connectives) and $\classV$ be a variety of algebras in  the signature $\Con$. We say that $\classV$ is $\Con'$\textit{-locally finite} if any finitely generated $\Con'$-subreduct of any algebra from $\classV$ is finite.

\begin{example} Since every finitely generated distributive lattice is finite (see, for instance, \cite{Birkhoff}), the variety $\Heyt$ of Heyting algebras is $\lbrace \land, \lor \rbrace$-locally finite. Since every finitely generated Brouwerian semilattice is finite (see, \cite{McKay_Finite_1967}), the variety $\Heyt$ is $\lbrace \land, \impl \rbrace$-locally finite. It is not hard to see that the latter entails that the variety $\Heyt$ is also $\lbrace \land, \impl, \neg \rbrace$-locally finite and $\lbrace \land, \impl, \bot \rbrace$-locally finite. 
\end{example} 

Let us observe, that if a variety $\classV$ is $\Con'$-locally finite, then the class of all finite partial algebras in which the operation from $\Con'$ are totally defined is $p$-complete: every finite partial subalgebra $\AlgB$ of an algebra $\AlgA$ can be extended as a $\Con'$-reduct to a partial subalgebra $\AlgC \unlhd \AlgA$ in which all operations from $\Con'$ are totally defined. Due to $\Con'$-local finiteness, $\AlgC$ is finite and, clearly, $\AlgB \unlhd \AlgC$. So, the following holds.

\begin{theorem} Let $\classV$ is a $\Con'$-locally finite variety with a TD term. And let $\classK$ be a class of all partial algebras from $\classV$ in which operations from $\Con'$ are totally defined. Then $\classK$ is $p$-complete and any proper subvariety of $\classV$ can be axiomatize by characteristic identities of algebras from $\classK$.  
\end{theorem}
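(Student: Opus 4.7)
The plan is to treat the two claims in sequence, observing that the heart of the argument is already spelled out in the remark preceding the theorem: first I verify that $\classK$ is $p$-complete in $\classV$ by closing any finite partial subalgebra under the operations of $\Con'$, and then the axiomatization assertion follows at once from the previous theorem, which guarantees that characteristic identities coming from any $p$-complete class form an $a$-complete axiomatization system.

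For $p$-completeness, fix $\AlgA \in \classV$ and a finite partial algebra $\AlgB$ with $\AlgB \unlhd \AlgA$ via some 1-1-homomorphism $\phi$. After replacing $\AlgB$ by the partial subalgebra of $\AlgA$ induced on $\phi(\algB)$ (this replacement preserves the relation $\AlgB \unlhd \AlgA$ and lets me recover $\AlgB \unlhd \AlgC$ at the end by factoring through the inclusion), I may assume outright that $\algB \subseteq \algA$ and that $\AlgB$ is the partial subalgebra on $\algB$. Now let $\algC \subseteq \algA$ be the subset obtained by closing $\algB$ under all the operations of $\Con'$ computed inside $\AlgA$. This $\algC$ is the universe of the $\Con'$-subreduct of $\AlgA$ generated by the finite set $\algB$, hence is finite by the hypothesis that $\classV$ is $\Con'$-locally finite. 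Equipping $\algC$ with the partial structure inherited from $\AlgA$ yields a partial subalgebra $\AlgC$ of $\AlgA$ on which every operation from $\Con'$ is totally defined, so $\AlgC \in \classK$. The inclusions $\algB \subseteq \algC \subseteq \algA$ are 1-1-homomorphisms of partial algebras, giving $\AlgB \unlhd \AlgC \unlhd \AlgA$, which is precisely the defining property of $p$-completeness.

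Having established that $\classK$ is $p$-complete, the previous theorem applies verbatim and yields that the set of all characteristic identities $\chi(\AlgC, x_\alga, x_\algb)$ with $\AlgC \in \classK$ and distinct $\alga, \algb \in \algC$ is $a$-complete over $\classV$. By the definition of $a$-completeness, every subvariety of $\classV$, and in particular every proper subvariety, admits an axiomatization over $\classV$ by a suitable subset of these characteristic identities, which is the second assertion.

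I do not anticipate any genuine obstacle: the $\Con'$-local finiteness hypothesis is invoked at exactly one point, namely to guarantee the closure $\algC$ is finite so that $\AlgC$ qualifies as a member of $\classK$. The only minor subtlety is the reduction from an abstract $\AlgB \unlhd \AlgA$ to the case where $\AlgB$ is literally a partial subalgebra of $\AlgA$, but this is entirely routine and does not require any structural property of $\classV$ beyond what is already assumed.
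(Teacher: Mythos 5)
Your proposal is correct and follows essentially the same route as the paper: the paper establishes $p$-completeness of $\classK$ in the paragraph immediately preceding the theorem by closing a finite partial subalgebra under the $\Con'$-operations and invoking $\Con'$-local finiteness to keep the closure finite, and then derives the axiomatization claim from the preceding theorem on $p$-complete classes. Your write-up merely makes explicit the routine reduction to the case where $\AlgB$ is literally a partial subalgebra of $\AlgA$, which the paper leaves tacit.
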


For example, since the variety $\Heyt$ of Heyting algebras is $\land,\impl,\neg$-locally finite, every proper variety of Heyting algebras can be axiomatized by the characteristic formulas of partial Heyting algebras in which operations $\land,\impl,\neg$ are totally defined. Such characteristic formulas are (interderivable with) Zakharyaschev's canonical formulas (comp. \cite{Tomaszewski_PhD,Bezhanishvili_G_N_2009}). For more information regarding relations between characteristic formulas of partial Heyting algebras and Zakharyaschev's canonical formulas see \cite{Citkin_Char_2012}.

\section{Characteristic Formulas as a Mean of Refutation}

It was first observed by G.~Birkhoff \cite{Birkhoff_1935} that the relation $\Vdash$ can be defined syntactically. If we take the following inference rules
\begin{itemize}
\item[(a)] $\Vdash t \approx t$
\item[(b)] $ t \approx r \Vdash r \approx t$ 
\item[(c)] $ t \approx r, r \approx s \Vdash t \approx s$
\item[(d)] $t_i \approx r_i, i=1,\dots n \Vdash f(t_1,\dots,t_n) \approx f(r_1,\dots,r_n)$.
\end{itemize}
In a usual way we can introduce the notion of inference and a derivability relation $\Vdash$. The Birkhoff's completeness theorem states that if a set of identities $\classI$ defines a variety $\classV$, then for each set of identities $\Gamma$ and an identity $\idn$ 
\[
\classI \cup \Gamma \Vdash \idn \text{ if and only if } \Gamma \models_\classV \idn.
\]
Thus, any set of identities $\classI$ defines an equational logic $Eq(\classI) = \set{\idn}{\classI \Vdash \idn}$. If $\classV$ is a variety, by $Eq(\classV)$ we will denote the equational logic of $\classV$, that is, the set of all identities valid in $\classV$. If $\classI$ is a finite set (the equational logic is finitely axiomatizable), the question arises whether $Eq(\classI)$ is decidable, that is, whether there is an algorithm recognizing for each given identity $\idn$ whether $\Vdash \idn$ holds. If a variety $\eqc(\classI)$ defined by the identities $\classI$ is finitely approximated, then the logic $Eq(\classI)$ is decidable. In \cite{Harrop_1958} R.~Harrop suggested the following algorithm: given an identity $\idn$, run in parallel two processes
\begin{itemize}
\item[(der)] enumerate all derivations from $\classI$ and check whether one of them ends with $\idn$ (that is $\classI \Vdash \idn$)
\item[(ref)] enumerate all finite algebras from $\AlgA \in \eqc\classI)$ (which is possible due to finite axiomatizability) and check whether $\AlgA \not\models \idn$ (that is $\idn$ is refutable in $\eqc(\classI)$ and, hence, $\classI \nVdash \idn$).
\end{itemize}
Due to finite approximability, one of the processes will always halt.

Now, let us assume that the variety $\eqc(\classI)$ has a TD term. Then, instead of attempting to refute the identity $\idn$ in an algebra $\AlgA$, we can try to derive $\chi(\AlgA)$ from $\classI \cup \{\idn\}$. Thus, the second process of Harrop's algorithm can be modified in the following way: 
\begin{itemize}
\item[(ref')] Enumerate all derivations from $\classI \cup \{\idn\}$ and check whether it ends with a characteristic identity of a finite s.i. algebra from $\eqc(\classI)$ (provided there is an algorithm to recognize whether a given identity is a characteristic identity of an algebra from $\eqc(\classI)$).
\end{itemize}
This gives us an idea how the characteristic identities can be used as a syntactic mean of refutation. In this section we will discuss this in more details.  

\subsection{r-Complete Sets} \label{rcomplete}

Let $\classV$ be a variety and $\classI$ be a set of identities. We say that $\classI$ \textit{is r-complete w.r.t.} $\classV$ if for any identity $\idn$ such that $\classV \not\models \idn$ there is an identity $\idr \in \classI$ such that $\idn \models_\classV \idr$ (often we will omit the reference to $\classV$).

\begin{example} For any variety $\classV$ the set $\set{\idn}{\classV \not\models \idn}$ is a trivial r-complete set. If a variety $\classV$ is finitely approximated and has a TD term then $\set{\chi(\AlgA)}{\AlgA \in FSI(\classV)}$ is an r-complete set.
\end{example}

Let us note the following simple property of r-complete sets.

\begin{prop} \label{finrcompl} If a variety $\classV$ has a finite r-complete set then every r-complete w.r.t. $\classV$ set contains a finite r-complete subset. 
\end{prop}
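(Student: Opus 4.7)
The plan is to use transitivity of $\models_\classV$: for each identity in the given finite r-complete set pick a surrogate from the arbitrary r-complete set that $\classV$-follows from it, and then verify that the finitely many surrogates themselves form an r-complete subset.

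Concretely, list the finite r-complete set as $\classI_0=\{\idr_1,\dots,\idr_n\}$. Without loss of generality I may assume $\classV\not\models \idr_i$ for each $i$: any $\idr_i$ with $\classV\models \idr_i$ is a trivial witness (every $\idn$ with $\classV\not\models\idn$ satisfies $\idn\models_\classV \idr_i$ automatically), so either $\classI$ also contains such a trivial witness --- in which case a singleton already suffices as the required finite r-complete subset --- or else the $\classV$-valid entries of $\classI_0$ are essentially inert and can be discarded without losing r-completeness. For each of the remaining $i$, apply the r-completeness of $\classI$ to the refutable identity $\idr_i$: this produces some $\idr_i^{\ast}\in\classI$ with $\idr_i\models_\classV \idr_i^{\ast}$. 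Set $\classI'\bydef\{\idr_1^{\ast},\dots,\idr_n^{\ast}\}$, which is a finite subset of $\classI$.

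To verify that $\classI'$ is r-complete, I would take any identity $\idn$ with $\classV\not\models\idn$. The r-completeness of $\classI_0$ delivers an index $i$ with $\idn\models_\classV \idr_i$, and by construction $\idr_i\models_\classV \idr_i^{\ast}$. The relation $\models_\classV$ is transitive --- immediately from its definition via preservation of validity on every $\AlgA\in\classV$ --- so $\idn\models_\classV \idr_i^{\ast}$ with $\idr_i^{\ast}\in\classI'$, exactly as the definition of r-completeness demands.

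I do not expect a genuine obstacle here: the statement is, in the end, the order-theoretic fact that the property ``finitely many elements are cofinal'' transfers between any two sets cofinal in the preorder $\models_\classV$. The only delicate point is the handling of $\classV$-valid ``trivial'' witnesses, which is disposed of by the reduction in the first step.
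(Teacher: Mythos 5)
Your core argument is exactly the paper's proof: list the finite r-complete set, pick for each member a surrogate in the given r-complete set that $\classV$-follows from it, and use transitivity of $\models_\classV$ to check that the finitely many surrogates are again r-complete. The paper does precisely this, silently applying the r-completeness of $\Gamma$ to each member $\idn_i$ of the finite set --- which presupposes $\classV \not\models \idn_i$, since the definition only supplies witnesses for refutable identities. So the extra care you take over $\classV$-valid members is addressing a real gap in the definition rather than inventing one. However, the second branch of your dichotomy does not work: if $\classI$ contains no $\classV$-valid identity, it does not follow that the $\classV$-valid entries of $\classI_0$ ``can be discarded without losing r-completeness.'' A $\classV$-valid identity is not inert as a witness --- it is a \emph{universal} witness ($\idn \models_\classV \idr$ holds for every $\idn$ when $\classV \models \idr$), so it may be carrying the entire burden of r-completeness; e.g.\ $\classI_0 = \{x \approx x\}$ is r-complete by the letter of the definition, and discarding its only member leaves the empty set. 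Worse, in that degenerate situation the proposition itself is false, so no repair is available: for $\classV = \Heyt$ the set of all refutable identities is r-complete, yet no finite set of refutable identities is r-complete (given finitely many refutable $\idn_1,\dots,\idn_k$, pick for each $j$ a finite algebra $\AlgA_j$ outside the subvariety defined by $\idn_j$; the tabular variety generated by $\AlgA_1\times\dots\times\AlgA_k$ is proper, is defined by a single refutable identity $\idn$, and $\idn \not\models_\classV \idn_j$ for every $j$). The statement is evidently intended for r-complete sets consisting of identities refutable in $\classV$ --- as in all of the paper's examples and applications --- and under that reading your argument, with the first-step reduction deleted, coincides with the paper's and is correct.
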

\begin{proof} Suppose $\idn_1,\dots,\idn_n$ is an r-complete set w.r.t. a variety $\classV$ and $\Gamma$ is an r-complete set. Then for every $\idn_i, i=1,\dots,n$, due to r-completeness of $\Gamma$, there is an identity $\idr_i \in \Gamma$ such that $\idn_i \models_\classV \idr_i$. Hence, $\{ \idr_1,\dots,\idr_n \}$ forms an r-complete w.r.t. $\classV$ set. 
\end{proof}

The following theorem establishes relations between r-complete sets of identities and algebras generating the variety.

\begin{theorem} \label{rcompl} Let $\classV$ be a variety, $\classI$ be a set of identities and $\classK \subseteq \classV$ be a class of (at most countable) algebras. Then
\begin{itemize}
\item[(a)] If the set $\classI$ is r-complete w.r.t. $\classV$ and for each $\idn \in \classI, \classK \not\models \idn$ , then $\classK$ generates the variety $\classV$;
\item[(b)] If $\classK$ generates a variety $\classV$ that has a TD term then the set of identities $\set{CS(\AlgA)}{\AlgA \in \classK}$ is r-complete w.r.t. $\classV$.
\item[(c)] If $\classK$ generates $\classV$ and consists of finite s.i. algebras, then the set of characteristic identities $\set{\chi_{_\classV(\AlgA)}}{\AlgA \in \classK}$ is r-complete w.r.t. $\classV$.
\end{itemize}
\end{theorem}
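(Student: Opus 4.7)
My plan is to handle (a) by a direct contradiction argument, and (b), (c) as essentially applications of results already established in the paper (Corollary \ref{partref} for locally characteristic identities, and Corollary \ref{genrefute} together with characteristic identities).

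For (a), I would let $\classV' \bydef \eqc(\classK)$ and assume for contradiction that $\classV' \subsetneq \classV$. Since $\classV'$ is a proper subvariety, there exists an identity $\idn$ valid in every algebra of $\classK$ (hence in $\classV'$) but not in $\classV$. Applying r-completeness to $\idn$ yields some $\idr \in \classI$ with $\idn \models_\classV \idr$. But $\classK \models \idn$ forces $\classK \models \idr$, contradicting the hypothesis that every identity of $\classI$ is refutable on $\classK$.

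For (b), let $\idn$ be an identity with $\classV \not\models \idn$. Since $\classK$ generates $\classV$ and the validity of identities is preserved under $\CHom, \CSub, \CProd$, some $\AlgA \in \classK$ must refute $\idn$. Now I would invoke Corollary \ref{partref}: because $\AlgA \not\models \idn$ and $\classV$ has a TD term, there exist a finite partial subalgebra $\AlgA' \unlhd \AlgA$ and two distinct elements $\alga,\algb \in \algA'$ such that $\idn \models_\classV \chi(\AlgA',x_\alga,x_\algb)$. By the very definition of $CS(\AlgA)$, this locally characteristic identity belongs to $CS(\AlgA)$, which witnesses r-completeness.

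For (c), under the hypothesis that $\classK$ consists of finite s.i. algebras, each $\AlgA \in \classK$ is automatically finitely presented in $\classV$ and carries a characteristic identity $\chi_{_\classV}(\AlgA)$. Given $\idn$ with $\classV \not\models \idn$, find as before some $\AlgA \in \classK$ with $\AlgA \not\models \idn$. Corollary \ref{genrefute} then supplies a substitution $\sigma$ with $\sigma(\idn) \vdash_\classV \chi_{_\classV}(\AlgA)$, i.e.\ $\sigma(\idn) \models_\classV \chi_{_\classV}(\AlgA)$. Since any algebra validating $\idn$ automatically validates every substitution instance $\sigma(\idn)$, we obtain $\idn \models_\classV \sigma(\idn) \models_\classV \chi_{_\classV}(\AlgA)$, as required.

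There is no real obstacle here: the combinatorial work has already been absorbed into Corollaries \ref{partref} and \ref{genrefute}; the only care needed is to distinguish $\models_\classV$ (semantic consequence) from $\vdash_\classV$ and to remember that $\idn \models_\classV \sigma(\idn)$ always holds, which is what lets (c) pass from a substitution instance to $\idn$ itself.
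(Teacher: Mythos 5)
Your proposal is correct and follows essentially the same route as the paper: part (a) is the paper's contrapositive argument recast as a contradiction, and parts (b) and (c) are exactly the paper's reductions to Corollary \ref{partref} and Corollary \ref{genrefute} respectively (your explicit handling of the substitution $\sigma$ and of the passage from $\vdash_\classV$ to $\models_\classV$ in (c) just spells out what the paper leaves implicit). No gaps.
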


\begin{proof} (a) Assume that $\classI$ is r-complete and for each $\idn \in \classI, \classK \not\models \idn$. Since $\classK \subseteq \classV$, clearly, $\eqc(\classK) \subseteq \classV$ and we need only to show that for every identity $\idt$ if $\classV \not\models \idt$ then $\classK \not\models \idt$. Indeed, due to r-completeness, there is $\idn \in \classI$ such that $\idt \models_\classV \idn$. By the assumption of the theorem, $\idn$ is refutable in some algebra $\AlgA \in \classK$, hence, $\idt$ must be refutable too.

(b) Let $\classK$ generates $\classV$ and $\idn$ be an identity refutable in $\classV$. Then, for $\classK$ generates $\classV$, there is an algebra $\AlgA \in \classK$ in which $\idn$ is refutable, i.e. $\AlgA \not\models \idn$. Now we can apply the Corollary \ref{partref} and conclude that $\idn \models_\classV \chi(\AlgA',x_\alga,x_\algb)$, where $\chi(\AlgA',x_\alga,x_\algb) \in CS(\AlgA)$. Hence, the set $\set{CS(\AlgA)}{\AlgA \in \classK}$ is r-complete w.r.t. $\classV$. 

(c) We can repeat the above argument and use the Corollary \ref{genrefute} instead of Corollary \ref{partref}.  
\end{proof}

\begin{cor} If $\classV$ is a finitely approximated variety with a TD term then $\set{\chi_{_\classV}(\AlgA)}{\AlgA \in FSI(\classV)}$ is r-complete w.r.t. $\classV$. 
\end{cor}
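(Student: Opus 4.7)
The plan is to deduce this directly from part (c) of Theorem \ref{rcompl} applied to the class $\classK \bydef FSI(\classV)$. Two preliminary facts need to be checked: that $\chi_{_\classV}(\AlgA)$ is well-defined for every $\AlgA \in FSI(\classV)$ (so that the statement even makes sense), and that $FSI(\classV)$ generates the whole variety $\classV$.

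For the first point, I would observe that any finite algebra $\AlgA \in \classV$ is finitely presented in $\classV$. Indeed, taking the underlying set of $\AlgA$ as the set of generators and the positive diagram $\delta^+(\AlgA)$ as the set of defining relations, the requirements (fp1), (fp2), (fp3') of Definition \ref{finpres} are immediate: any map from the generators of $\AlgA$ into an algebra $\AlgB \in \classV$ that satisfies every equation of $\delta^+(\AlgA)$ visibly preserves the operation tables of $\AlgA$ and therefore extends to a homomorphism. Consequently, the characteristic identity $\chi_{_\classV}(\AlgA)$ is defined for each $\AlgA \in FSI(\classV)$.

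For the second point, I would invoke Birkhoff's subdirect representation theorem in its finite form: any finite algebra has finitely many congruences, so each of its proper congruences is a meet of completely meet-irreducible ones, and the corresponding quotients are finite s.i.~algebras still sitting in $\classV$. Thus every finite member of $\classV$ is a subdirect product of algebras from $FSI(\classV)$. Because $\classV$ is finitely approximated, i.e.\ $\classV = \classV^\circ$ is generated by its finite members, it follows that $\classV$ is in fact generated by $FSI(\classV)$.

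With these two facts in hand, Theorem \ref{rcompl}(c), applied with $\classK = FSI(\classV)$, yields at once that $\set{\chi_{_\classV}(\AlgA)}{\AlgA \in FSI(\classV)}$ is r-complete w.r.t.\ $\classV$. There is no real obstacle here: the corollary is essentially a direct specialization of Theorem \ref{rcompl}(c), and the only ingredient that is not entirely formal is the standard subdirect-representation argument used to pass from finitely generated (as a variety) to s.i.-generated.
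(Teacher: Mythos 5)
Your proposal is correct and matches the paper's intent: the corollary is stated without proof precisely because it is the direct specialization of Theorem \ref{rcompl}(c) to $\classK = FSI(\classV)$, with the two auxiliary facts you supply (finite algebras are finitely presented via their diagrams, and a finitely approximated variety is generated by its finite s.i.\ members by subdirect representation) being exactly the standard ingredients needed.
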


Recall that a variety is called \textit{tabular} if it is generated by a single finite algebra.

\begin{theorem} Let $\classV$ be a finitely approximated variety with a TD term. Then $\classV$ is tabular if and only if there is a finite r-complete w.r.t. $\classV$ set of identities.  
\end{theorem}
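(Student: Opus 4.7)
\medskip

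The plan is to prove both directions by leveraging the characterization results just established, especially Theorem \ref{rcompl} and Proposition \ref{finrcompl}.

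For the forward direction, assume $\classV$ is generated by a single finite algebra $\AlgA$. First I would note that a variety with a TD term is congruence distributive (every TD term witnesses EDPC). By J\'onsson's Lemma, every subdirectly irreducible member of $\classV$ lies in $\CHom\CSub\CProd_U(\AlgA)$, and since $\AlgA$ is finite the ultraproducts collapse, so $FSI(\classV) \subseteq \CHom\CSub(\AlgA)$. The latter is finite up to isomorphism, hence $FSI(\classV)$ itself is finite up to isomorphism. Since $\classV$ is finitely approximated with a TD term, Theorem \ref{rcompl}(c) applied to $\classK = FSI(\classV)$ yields that $\set{\chi_{_\classV}(\AlgB)}{\AlgB \in FSI(\classV)}$ is r-complete w.r.t.\ $\classV$, and by the above this set is finite.

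For the converse, assume a finite r-complete set $\classI = \{\idn_1,\dots,\idn_n\}$ exists. Because $\classV$ is finitely approximated and has a TD term, Theorem \ref{rcompl}(c) (with $\classK = FSI(\classV)$) tells us that $\Gamma \bydef \set{\chi_{_\classV}(\AlgA)}{\AlgA \in FSI(\classV)}$ is r-complete w.r.t.\ $\classV$. Now apply Proposition \ref{finrcompl}: since a finite r-complete set exists, $\Gamma$ must contain a finite r-complete subset, say $\set{\chi_{_\classV}(\AlgA_i)}{i=1,\dots,m}$ with each $\AlgA_i$ finite and s.i. Let $\classK = \{\AlgA_1,\dots,\AlgA_m\}$. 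Every $\chi_{_\classV}(\AlgA_i)$ is refuted in $\AlgA_i \in \classK$, so Theorem \ref{rcompl}(a) applies and gives that $\classK$ generates $\classV$. Finally, $\classV = \eqc(\AlgA_1,\dots,\AlgA_m) = \eqc(\AlgA_1 \times \cdots \times \AlgA_m)$, so $\classV$ is generated by the single finite algebra $\AlgA_1 \times \cdots \times \AlgA_m$ and is thus tabular.

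The main obstacle is the forward direction: one needs the fact that tabular varieties with a TD term have only finitely many (isomorphism classes of) finite s.i.\ members. This is not proved in the excerpt but follows from the TD term giving congruence distributivity together with J\'onsson's Lemma; this is the only external ingredient required. Once this structural finiteness is in hand, both directions reduce to straightforward applications of Theorem \ref{rcompl} and Proposition \ref{finrcompl}.
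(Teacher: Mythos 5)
Your proof is correct, and the backward direction coincides with the paper's: both take the r-complete set $\set{\chi_{_\classV}(\AlgA)}{\AlgA \in FSI(\classV)}$ supplied by Theorem \ref{rcompl}(c), extract a finite r-complete subset via Proposition \ref{finrcompl}, and invoke Theorem \ref{rcompl}(a) to see that the finitely many corresponding algebras generate $\classV$ (the paper leaves this last step as ``not hard to see''; your explicit appeal to part (a) is a welcome clarification). The forward direction is where you diverge. You prove the stronger statement that $FSI(\classV)$ is finite up to isomorphism, which forces you to import congruence distributivity (via EDPC) and J\'onsson's Lemma. The paper avoids this entirely: if $\classV = \eqc(\AlgA)$ with $\AlgA$ finite, then $\AlgA$ is a subdirect product of finitely many finite s.i.\ algebras $\AlgA_1,\dots,\AlgA_n$, these already generate $\classV$, and Theorem \ref{rcompl}(c) applied to $\classK = \lbrace\AlgA_1,\dots,\AlgA_n\rbrace$ hands you a finite r-complete set at once. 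So the ``main obstacle'' you identify --- finiteness of the whole of $FSI(\classV)$ --- is not actually needed; one only needs \emph{some} finite set of finite s.i.\ generators, and the subdirect decomposition of the single generating algebra supplies it. Your route buys a stronger structural fact about $\classV$ at the cost of external machinery; the paper's route is more elementary and self-contained.
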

\begin{proof} Assume that a variety $\classV$ is tabular and has a TD term. Then, $\classV$ is generated by some finite algebra $\AlgA$. Since $\AlgA$ is finite, it is a subdirect product of some finite s.i. algebras $\AlgA_1,\dots,\AlgA_n$. Clearly, the variety $\classV$ is generated by algebras $\AlgA_1,\dots,\AlgA_n$ and we can apply the Theorem \ref{rcompl} (c) and complete the proof of this case.

Conversely, assume that $\classV$ has a finite r-complete set. By the definition of finite approximability, $\classV$ is generated by its finite algebras. We can safely assume that the variety $\classV$ is generated by some set $\classK$ of finite s.i. algebras. Hence, due to the Theorem \ref{rcompl} (c), the set $\set{\chi_{_\classV}(\AlgA)}{\AlgA \in \classK}$ is r-complete. Now we can apply the Proposition \ref{finrcompl} and conclude that there is such a finite subset $\AlgA_1,\dots,\AlgA_n \in \classK$ that $\lbrace \chi(\AlgA_i), i=1,\dots,n \rbrace$ is r-complete. It is not hard to see that algebras $\AlgA_1,\dots,\AlgA_n$ generate $\classV$, thus, $\classV$ is tabular.
\end{proof}

\subsection{r-Complete Sets and Splitting}

The goal of this Section is to prove that the splitting varieties have an r-complete set that consists of a single identity.

\begin{theorem} \label{refsplitting} Let $\classV$ be a variety and $\AlgA \in \classV$ be an algebra. Then $\AlgA$ is a splitting algebra if and only if the variety $\classV_0 = \eqc(\AlgA)$ generated by $\AlgA$ has an r-complete w.r.t. $\classV$ set that consists of a single identity that is invalid in $\AlgA$. 
\end{theorem}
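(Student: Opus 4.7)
The plan is to interpret the statement as asserting the existence of a single identity $\idn$, invalid in $\AlgA$, such that every identity $\idt$ refuted in $\classV_0 = \eqc(\AlgA)$ satisfies $\idt \models_{\classV} \idn$; under this reading the equivalence matches $\idn$ with the splitting identity of $\AlgA$ in $\classV$, and the subvariety defined by $\idn$ in $\classV$ with the co-splitting subvariety $\classV^*$. Both directions then reduce to the definition of splitting together with the preservation of identities under $\CHom$, $\CSub$ and $\CProd$.

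For the implication ``$\AlgA$ splitting $\Rightarrow$ r-complete singleton exists'', I take $\idn$ to be the splitting identity of $\AlgA$; by definition it is refuted in $\AlgA$ and defines the largest subvariety $\classV^*$ of $\classV$ not containing $\AlgA$. Given any identity $\idt$ with $\AlgA \not\models \idt$ and any $\AlgB \in \classV$ with $\AlgB \models \idt$, the subvariety $\eqc(\AlgB)$ inherits $\idt$ from $\AlgB$ and therefore cannot contain $\AlgA$. Maximality of $\classV^*$ yields $\eqc(\AlgB) \subseteq \classV^*$, and hence $\AlgB \models \idn$; this gives $\idt \models_{\classV} \idn$ as required.

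For the converse, I take the hypothesised $\idn$ and set $\classV^* \bydef \{ \AlgB \in \classV \mid \AlgB \models \idn \}$; invalidity of $\idn$ in $\AlgA$ forces $\AlgA \notin \classV^*$, so it suffices to prove that $\classV^*$ is the largest subvariety of $\classV$ missing $\AlgA$. Given an arbitrary subvariety $\classW \subseteq \classV$ with $\AlgA \notin \classW$, there is an identity $\idt$ valid in $\classW$ but refuted in $\AlgA$, since subvarieties are separated from excluded algebras by identities. Applying r-completeness to $\idt$ yields $\idt \models_{\classV} \idn$, so every $\AlgB \in \classW$ satisfies $\idn$ and thus lies in $\classV^*$; hence $\classW \subseteq \classV^*$, and $\AlgA$ is splitting.

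The main obstacle is fixing the correct reading of ``an r-complete w.r.t. $\classV$ set'' associated with $\classV_0$: if one applies the Section \ref{rcomplete} definition literally to a singleton, Theorem \ref{rcompl}(a) immediately forces $\AlgA$ to generate $\classV$, which is not assumed here and fails for the paradigmatic example of a finite s.i. Heyting algebra splitting $\Heyt$. Restricting the universally quantified identity to those refuted in $\classV_0$ (equivalently, invalid in $\AlgA$) while keeping the consequence relation $\models_{\classV}$ is the reading that simultaneously makes the statement correct and produces the classical splitting identity in both directions.
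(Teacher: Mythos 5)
Your proof is correct and follows essentially the same route as the paper's: the single identity is the one defining the co-splitting variety, and both directions reduce to the maximality clause in the definition of splitting (you argue the forward direction directly where the paper argues by contradiction, and you make the separating-identity step of the converse explicit). Your reading of ``r-complete w.r.t.\ $\classV$'' --- quantifying over identities refuted in $\classV_0$ while keeping the consequence relation $\models_{\classV}$ --- is exactly the one the paper's own proof uses.
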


\begin{proof} Suppose $\AlgA$ is a splitting algebra and $\classV_0 = \eqc(\AlgA)$ is a splitting variety. Let a co-splitting variety $\classV_1$ be defined by an identity $\ids$. Let us prove that $\{ \ids \}$ is an r-complete set.

Suppose $\idn$ is an identity such that $\classV_0 \not\models \idn$. We need to show that $\idn \vDash_\classV \ids$. Assume for contradiction $\idn \nvDash_\classV \ids.$
Then for some algebra $\AlgB \in \classV$ we have
\begin{equation}
\AlgB \models \idn \text{ and } \AlgB \not\models \ids. \label{c-assump1}
\end{equation}
Let us consider the variety $\eqc(\AlgB$) generated by $\AlgB$. Clearly, $\AlgB \notin \classV_1$, for the variety $\classV$ is defined by $\ids$. Hence, $\eqc(\AlgB) \nsubseteq \classV_1$ and, by the definition of splitting algebra, $\AlgA \in \eqc(\AlgB)$. The latter means that $\AlgB \models \idn$ yields 
\[
\AlgB \models \ids
\]
and the latter contradicts \eqref{c-assump1}.

Conversely, let $\classV$ be a subvariety and $\ids$ be an identity that forms by itself an r-complete w.r.t. $\classV$ set and $\AlgA \not\models \ids$. We need to prove that $\AlgA$ is a splitting algebra, that is, we need to prove that there is the largest subvariety of $\classV$ not containing $\AlgA$. Our goal is to demonstrate that the subvariety $\classV_1 \subseteq \classV$ defined by $\ids$ is, indeed, the largest subvariety of $\classV$ not containing $\AlgA$. Obviously, $\AlgA \notin \classV_1$. Now, let us recall that $\{\ids\}$ is an r-complete set. Hence, if some identity $\idn$ is invalid in $\AlgA$, this identity is invalid in $\classV_0$ and, by the definition of r-completeness, we have $\idn \vDash_\classV \ids$. The latter means that $\ids$ is valid in every algebra in which $\idn$ is valid and, hence, every algebra in which $\idn$ is valid belongs to $\classV_1$. 
\end{proof}

We can modify Harrop's algorithm and prove the following theorem.

\begin{theorem} \label{dsplitting} Assume $\classV_0$ is a variety and $\classV \subseteq \classV_0$ is a splitting subvariety. Then if equational logic $Eq(\classV)$ is finitely axiomatizable then $Eq(\classV)$ is decidable.
\end{theorem}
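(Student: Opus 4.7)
The strategy is to carry out a Harrop-style parallel search, using Theorem \ref{refsplitting} to replace the ``refutation'' half with a syntactic derivation of a single fixed identity. Since $\classV$ is a splitting subvariety of $\classV_0$, it is generated by some splitting algebra $\AlgA$. Applying Theorem \ref{refsplitting} (with the present $\classV_0$ as the ambient variety and $\classV = \eqc(\AlgA)$ as the splitting subvariety), I obtain an identity $\ids$ with $\AlgA \not\models \ids$ (hence $\classV \not\models \ids$) and with the property that for every identity $\idn$ satisfying $\classV \not\models \idn$ one has $\idn \vDash_{\classV_0} \ids$, and a fortiori $\idn \vDash_\classV \ids$, since $\classV \subseteq \classV_0$.

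Let $\classI$ be a finite set of identities with $\eqc(\classI) = \classV$. By Birkhoff's completeness theorem, for any set of identities $\Gamma$ and any identity $\idt$, one has $\classI \cup \Gamma \Vdash \idt$ iff $\Gamma \vDash_\classV \idt$. In particular, $\idn \in Eq(\classV)$ iff $\classI \Vdash \idn$, and $\idn \vDash_\classV \ids$ iff $\classI \cup \lbrace \idn \rbrace \Vdash \ids$. The decision procedure runs on input $\idn$ two processes in parallel: (der) enumerate all derivations from $\classI$ in Birkhoff's system, waiting for $\idn$ to appear; and (ref) enumerate all derivations from $\classI \cup \lbrace \idn \rbrace$, waiting for $\ids$ to appear. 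If $\idn \in Eq(\classV)$, then (der) halts. If $\idn \notin Eq(\classV)$, then $\classV \not\models \idn$, so by r-completeness $\idn \vDash_\classV \ids$, whence $\classI \cup \lbrace \idn \rbrace \Vdash \ids$ and (ref) halts. The two alternatives are exclusive: if both halted, $\classV \models \idn$ together with $\idn \vDash_\classV \ids$ would force $\classV \models \ids$, contradicting $\AlgA \in \classV$ and $\AlgA \not\models \ids$. Hence the first process to halt determines membership in $Eq(\classV)$, giving decidability.

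The principal obstacle is the translation of the semantic r-completeness of the singleton $\lbrace \ids \rbrace$ into a purely syntactic halting criterion for the refutation process. Once Birkhoff's completeness theorem is applied with the extra hypothesis $\idn$, everything else is routine parallel-search bookkeeping; in particular, no decidability, finite axiomatizability, or even recursive enumerability assumption on the larger variety $\classV_0$ is needed, since every enumeration is carried out relative to the finite presentation $\classI$ of $\classV$ and the fixed identity $\ids$ furnished by the splitting. This mirrors the modification of Harrop's algorithm sketched in Section \ref{rcomplete}: the single co-splitting identity plays the role of the (generally infinite) list of characteristic identities needed in the finitely approximated case.
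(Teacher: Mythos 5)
Your proof is correct and follows essentially the same route as the paper: the paper's proof is exactly the two-process Harrop-style parallel search, deriving $\idn$ from the axioms versus deriving the co-splitting identity $\ids$ from the axioms plus $\idn$. You merely spell out the termination and exclusivity arguments (via Theorem \ref{refsplitting} and Birkhoff completeness) that the paper compresses into the remark that ``one of these two processes will halt.''
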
 
\begin{proof}
Let $Ax$ be a finite set of axioms of $Eq(\classV)$ and $\ids$ be an identity defining the co-splitting variety. Then in order to determine whether a given identity $\idn$ is derivable from $\Ax$ we need in parallel to do the following:
\begin{itemize}
\item[(a)] to try to derive $\idn$ from $Ax$;
\item[(b)] to try to derive $\ids$ from $Ax \cup \{\idn\}$.
\end{itemize}
It follows from the definition of splitting that one of these two processes will halt in finite number of steps. 
\end{proof}

\subsection{r-Complete Sets in the not Finitely Approximated Varieties}

As we saw in the Section \ref{rcomplete}, if a finitely approximated variety with a TD term has a finite r-complete set, then this variety is tabular. In this section we will prove that if a variety is not finitely approximated, the situation is different.

\begin{theorem} \label{finappdec} Every finitely pre-approximated variety has an r-complete set that consist of a single identity. Hence, the equational logic of every finitely axiomatizable finitely pre-approximated variety is decidable.
\end{theorem}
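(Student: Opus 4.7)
The plan is to identify a single identity $\ids$ whose validity distinguishes $\classV$ from its unique largest proper subvariety, and then follow the scheme of Theorem~\ref{dsplitting} to extract decidability from the resulting single-identity r-complete set.

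First I would establish that $\classV^\circ$, the subvariety generated by the finite members of $\classV$, is the largest proper subvariety of $\classV$. By hypothesis $\classV$ is not finitely approximated, so $\classV^\circ \subsetneq \classV$. Every other proper subvariety $\classV' \subsetneq \classV$ is finitely approximated by the definition of finitely pre-approximated, hence $\classV' = (\classV')^\circ \subseteq \classV^\circ$. Thus $\classV$ covers $\classV^\circ$ in the subvariety lattice, and I may select an identity $\ids$ with $\classV^\circ \models \ids$ and $\classV \not\models \ids$.

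Next I would check that $\{\ids\}$ is r-complete with respect to $\classV$. Given any identity $\idn$ with $\classV \not\models \idn$, the subvariety $\classV' \bydef \{\AlgA \in \classV : \AlgA \models \idn\}$ is a proper subvariety of $\classV$, so by the maximality secured above $\classV' \subseteq \classV^\circ$. Consequently every $\AlgA \in \classV$ satisfying $\idn$ also satisfies $\ids$, which is exactly $\idn \vDash_\classV \ids$, establishing the first assertion.

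For the decidability statement, given a finite axiomatization $\Ax$ of $\classV$ and an input identity $\idn$, I would run two Birkhoff enumerations in parallel, exactly as in the proof of Theorem~\ref{dsplitting}: one searching for a derivation of $\idn$ from $\Ax$, and one searching for a derivation of $\ids$ from $\Ax \cup \{\idn\}$. By completeness of equational logic combined with the r-completeness of $\{\ids\}$, exactly one of the two branches terminates, and its outcome decides whether $\classV \models \idn$. The only real obstacle is the structural first step---the maximality of $\classV^\circ$ among proper subvarieties; the r-completeness check and the Harrop-style algorithm then both drop out immediately. Note that $\ids$ need not be produced effectively from $\Ax$: the existence supplied by the first half of the theorem suffices, since the algorithm simply hard-codes any such $\ids$.
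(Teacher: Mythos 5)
Your proposal is correct and follows essentially the same route as the paper: the structural heart in both cases is that $\classV^\circ$ is the largest proper subvariety of $\classV$ (because every proper subvariety is finitely approximated and hence contained in $\classV^\circ$), after which a separating identity $\ids$ yields a one-element r-complete set and the Harrop-style parallel search of Theorem~\ref{dsplitting} gives decidability. The only cosmetic difference is that the paper packages the maximality step by exhibiting an s.i.\ algebra $\AlgA \in \classV \setminus \classV^\circ$ that generates $\classV$ and splits it, so as to invoke Theorem~\ref{refsplitting}, whereas you verify the r-completeness of $\{\ids\}$ directly; both amount to the same computation.
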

\begin{proof} Let $\classV$ be a finitely pre-approximated variety. Our goal is to demonstrate that a variety $\classV^\circ$ generated by all finite algebras of $\classV$ is the largest proper subvariety of $\classV$ and, hence, is a co-splitting variety in $\classV$, while the variety $\classV$ splits itself. Then we can apply the Theorem \ref{dsplitting} and complete the proof.

By the definition of finitely pre-approximated variety, the variety $\classV$ is not generated by its finite algebras, hence $\classV^\circ \subsetneq \classV$. Let $\AlgA \in \classV \setminus \classV^\circ$ be a subdirectly irreducible algebra. We will show that 
\begin{itemize}
\item[(a)] $\AlgA$ generates the variety $\classV$;
\item[(b)] $\classV^\circ$ is the largest subvariety of $\classV$ not containing $\AlgA$, 
\end{itemize}
that is, $\AlgA$ is a splitting algebra and $\AlgA$ generates $\classV$. 

(a) Since $\AlgA \notin \classV^\circ$ we have $\eqc(\AlgA) \not\subseteq \classV^\circ$. Hence, the variety $\eqc(\AlgA)$ is not generated by finite algebras. Using the definition of finitely pre-approximated variety we can conclude that $\eqc(\AlgA) = \classV$. 

(b) Let $\classV'$ be a subvariety of $\classV$ and $\AlgA \notin \classV'$. Then $\classV'$ is a proper subvariety of $\classV$ and, by the definition of finitely pre-approximated variety, $\classV'$ is finitely approximated. Hence, the variety $\classV'$ is generated by finite algebras and, therefore, $\classV' \subseteq \classV^\circ$. Thus, every subvariety not containing $\AlgA$ is a subvariety of $\classV^\circ$.
\end{proof}

\begin{remark} In \cite[Theorem 7.2]{Skura1992} T.~Skura proved the decidability of some not finitely approximated varieties of Heyting algebras by using a pre-true identity as a r-complete set.
\end{remark}

Recall (see, for instance, \cite{Mardaev_Embedding_1987}) that a variety $\classV$ is called \textit{pre-locally finite} if $\classV$ is not locally finite, but all proper subvarieties of $\classV$ are locally finite. In  \cite{Mardaev_Embedding_1987} S.~Mardaev proved that there is a continuum of pre-locally finite varieties of intermediate and positive logics.

\begin{theorem} Let $\classV$ be a finitely axiomatized pre-locally finite variety. Then equational logic $Eq(\classV)$ is decidable.
\end{theorem}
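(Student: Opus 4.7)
The plan is to reduce the problem to Theorem~\ref{finappdec} after a short case split. The preliminary observation I would make is the standard fact that any locally finite variety is finitely approximated: if every finitely generated algebra is finite, then in particular each free algebra $\AlgF_\classV(n)$ is finite, and since $\classV$ is generated by the family of its finitely generated free algebras we get $\classV = \classV^\circ$. Combining this with the hypothesis of pre-local finiteness, every proper subvariety of $\classV$ is finitely approximated.

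In \emph{Case~1}, suppose $\classV$ itself is finitely approximated. Then $\classV$ is a finitely axiomatized, finitely approximated variety, and its equational logic is decidable by the Harrop-style argument already invoked elsewhere in the paper: given an identity $\idn$, I run in parallel an enumeration of derivations from the given finite axiom set (witnessing $\classV \models \idn$) and an enumeration of the finite algebras lying in $\classV$, testing in each whether $\idn$ fails (witnessing $\classV \not\models \idn$ via finite approximation). Membership of a given finite algebra in $\classV$ is decidable because $\classV$ is finitely axiomatized, so the refutation search is effective; finite approximation guarantees that one of the two processes halts.

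In \emph{Case~2}, suppose $\classV$ is not finitely approximated. Together with the preliminary observation that every proper subvariety of $\classV$ is finitely approximated, this is precisely the statement that $\classV$ is finitely pre-approximated, so Theorem~\ref{finappdec} applies directly to the finitely axiomatized variety $\classV$ and yields the decidability of $Eq(\classV)$.

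The main (and essentially only) obstacle is to resist the temptation to conclude at once that pre-local finiteness implies finite pre-approximation: in principle $\classV$ could be finitely approximated while failing to be locally finite (say, generated by its finite members but still possessing an infinite finitely generated algebra), so the easy Case~1 must be separated out before the reduction to Theorem~\ref{finappdec} in Case~2 goes through.
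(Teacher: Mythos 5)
Your proposal is correct and follows essentially the same route as the paper: the same case split on whether $\classV$ is finitely approximated, with Harrop's algorithm handling the approximated case and the observation that local finiteness implies finite approximability reducing the other case to Theorem~\ref{finappdec}. The extra detail you supply (why the refutation search is effective, and why one cannot immediately conclude finite pre-approximation) only fleshes out steps the paper leaves implicit.
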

\begin{proof} Let $\classV$ be a finitely axiomatized pre-locally finite variety. Then $\classV$ is either finitely approximated, or not. If $\classV$ is finitely approximated, then we can use the Harrop's algorithm, and conclude that $Eq(\classV)$ is decidable. If the variety $\classV$ is not finitely approximated, clearly, it is finitely pre-approximated, for every locally finite variety is finitely approximated. Hence, in this case we can apply the Theorem \ref{finappdec} and complete the proof. 
\end{proof}

\subsection{Finial Remarks}

As we have seen, an ability to construct a characteristic identity is rested on the properties of a TD term. If we move from varieties (logics understood as closed sets of formulas) to quasivarieties (single conclusion consequence relations) or to universal classes (multiple conclusion consequence relations), the situation becomes more simple, because there is no need to use a TD term. In a single conclusion case, a quasi-identity constructed of finitely presented algebra corresponds to a quasi-characteristic rule. Quasi-characteristic rules were introduced by the author in \cite{Citkin1977} for intermediate logic and then were extended to modal logics by V.~Rybakov (see  \cite{Rybakov_Quasi_Characteristic_1997,Rybakov_Book}). The algebraic properties of corresponding quasi-identities are studied in \cite{Budkin_Gorbunov_1975}. 

In the multiple conclusion case, one can repeat the arguments (without reference to a TD term). For partial algebras, similarly to how we arrived to canonical formulas, one can construct the canonical rules \cite{Jerabek_Canonical_2009}.  It is worth noting that if $\AlgA$ is a partial algebra we can also use a negative diagram 
\[
\delta^-(\AlgA) \bydef \set{f(x_{\alga_1},\dots,x_{\alga_n}) \approx x_\algb}{ \algb \neq f(\alga_1,\dots,\alga_n); \algb,\alga_1,\dots,\alga_n \in \algA; f \in \Con  }
\]   
and consider a characteristic rule
\[
\chi(\AlgA) \bydef \delta^+(\AlgA) / \delta^-(\AlgA). 
\]
It is easy to see that if $\chi(\AlgA)$ is refuted in $\AlgB$, then $\AlgA$ is embedded in $\AlgB$ and algebra $\AlgA$ does not even have to be subdirectly irreducible. This is what makes a multiply conclusion case even much more simple that a single conclusion one.

\subsection{Acknowledgments} The author wants to thank A.~Muravitsky for countless discussions that helped while I was working on this paper.


\bibliographystyle{acm}

\def\cprime{$'$}

\end{document}